\newcolumntype{V}{!{\vrule width 2pt}}
\numberwithin{equation}{section}
\def\blue{\textcolor{blue}}
\def\magenta{\textcolor{magenta}}
\theoremstyle{plain}
\newtheorem{theorem}{Theorem}[section]
\newtheorem{corollary}[theorem]{Corollary}
\newtheorem{proposition}[theorem]{Proposition}
\newtheorem{conjecture}[theorem]{Conjecture}
\newtheorem{remark}[theorem]{Remark}
\newtheorem{lemma}[theorem]{Lemma}
\newtheorem{definition}[theorem]{Definition}
\newtheorem{example}[theorem]{Example}
\def\T{\mathcal{T}}
\def\B{\mathcal{B}}
\def\s{{\bf s}}
\newcommand{\leaf}{\mathsf{leaf}}
\def\deg{\mathsf{deg}}
\def\od{\mathsf{od}}
\def\ed{\mathsf{ed}}
\def\el{\mathsf{el}}
\def\o{\mathsf{odd}}
\def\fo{\mathsf{oddf}}
\def\e{\mathsf{even}}
\def\oe{\mathsf{oe}}
\def\oo{\mathsf{oo}}
\def\ee{\mathsf{ee}}
\def\eo{\mathsf{eo}}
\def\act{\mathsf{act}}
\def\eact{\mathsf{eact}}
\def\oact{\mathsf{oact}}
\def\rdeg{\mathsf{rdeg}}
\def\rol{\mathsf{rol}}
\def\ell{\mathsf{ell}}
\def\ord{\mathsf{ord}}
\def\oler{\mathsf{oler}}
\def\ndoler{\mathsf{ndoler}}
\def\eler{\mathsf{eler}}
\def\ndord{\mathsf{ndord}}
\def\dme{\mathsf{dme}}
\def\dmo{\mathsf{dmo}}
\def\Z{ \mathbb{Z}}
\def\sn{\mathrm{sn}}
\def\cn{\mathrm{cn}}
\def\dn{\mathrm{dn}}
\def\mod{\mathrm{mod}}
\newcommand{\der}{{\rm{d}}}
\begin{document}

\title[A symmetry on weakly increasing trees]{A symmetry on weakly increasing trees and \\multiset Schett polynomials}

\author[Z. Lin]{Zhicong Lin}
\address[Zhicong Lin]{Research Center for Mathematics and Interdisciplinary Sciences, Shandong University, Qingdao 266237, P.R. China}
\email{linz@sdu.edu.cn}

\author[J. Ma]{Jun Ma}
\address[Jun Ma]{ School of Mathematical Sciences, Shanghai Jiao Tong University, Shanghai 200240, P.R. China}
\email{majun904@sjtu.edu.cn}

\date{\today}

\begin{abstract}
By considering the parity of the degrees and levels of nodes in increasing trees, a new combinatorial interpretation for the coefficients of the Taylor expansions of the Jacobi elliptic functions is found. As one application of this new interpretation, a conjecture of Ma--Mansour--Wang--Yeh is solved. Unifying the concepts of increasing trees and plane trees, Lin--Ma--Ma--Zhou introduced weakly increasing trees on a multiset.  A symmetry joint distribution of ``even-degree nodes on odd levels'' and ``odd-degree nodes''  on weakly increasing trees is found, extending the Schett polynomials, a generalization  of the Jacobi elliptic functions introduced by Schett,  to multisets. A combinatorial proof and an algebraic proof of this symmetry are provided, as well as several relevant interesting consequences.  Moreover, via introducing a group action on trees, we prove the partial $\gamma$-positivity of the multiset Schett polynomials, a result implies both the symmetry and the unimodality of these polynomials.

\end{abstract}


\keywords{Jacobi elliptic functions; Weakly increasing trees; Parity;  Degrees or levels of nodes}

\maketitle


\section{Introduction}\label{sec1: intro}
As unified generalization of increasing trees and plane trees, the weakly increasing trees on a multiset were introduced recently in the joint work of the two authors with Ma and Zhou~\cite{Lin2020}. The  objective of this article is to prove both bijectively and algebraically the symmetry of the joint distribution of ``even-degree nodes on odd levels'' and ``odd-degree nodes'' on weakly increasing trees.  The discovery of this symmetric distribution was inspired by Dumont's combinatorial interpretation of the Jacobi elliptic functions~\cite{Dumont1979,Dumont1981}, Deutsch's combinatorial  bijection on plane trees~\cite{Deutsch2000} and Liu's recursive involution on increasing trees~\cite{Liu2019}. 

Let us begin with the definition of weakly increasing trees on a multiset. A {\em plane tree} $P$ can be defined  recursively as follows: a node $v$ is one designated vertex, which is called the root of $P$.
Then either $P$ contains only one node $v$, or it has a sequence $(P_1,P_2,\ldots,P_k)$ of $k$ subtrees $P_i$, each of which is a plane tree. So, the subtrees of each node are linearly ordered. We write these subtrees  in the order left to right when we draw such trees. We also write the root $v$  on the top and draw an edge from $v$ to the root of each of its subtrees. Let $(p_1,p_2,\ldots,p_n)$ be a sequence of positive integers. Denote by $M=M(p_1,p_2,\ldots,p_n)$ the multiset $\{1^{p_1},2^{p_2},\ldots,n^{p_n}\}$ and let $p=\sum_{i=1}^np_i$.

\begin{definition}[Weakly increasing trees~\cite{Lin2020}]
A weakly increasing tree on $M$ is a plane tree  such that 
\begin{enumerate}[(i)]
\item  it contains $p+1$ nodes that are labeled by elements from the multiset $M\cup\{0\}$,
\item  each sequence of labels along a path from the root to any leaf is weakly increasing, and
\item  for each node $j$ in the tree, labels of the roots of subtrees of $j$ are  weakly increasing from left to right. \end{enumerate}   Denote by $\mathcal{T}_{M}$ the set of weakly increasing trees on $M$. See Fig.~\ref{18wit} for all the  weakly increasing trees on $M=\{1^2,2^2\}$. 
\end{definition}

\begin{figure}
	\begin{center}
	\includegraphics[width=8cm,height=6cm]{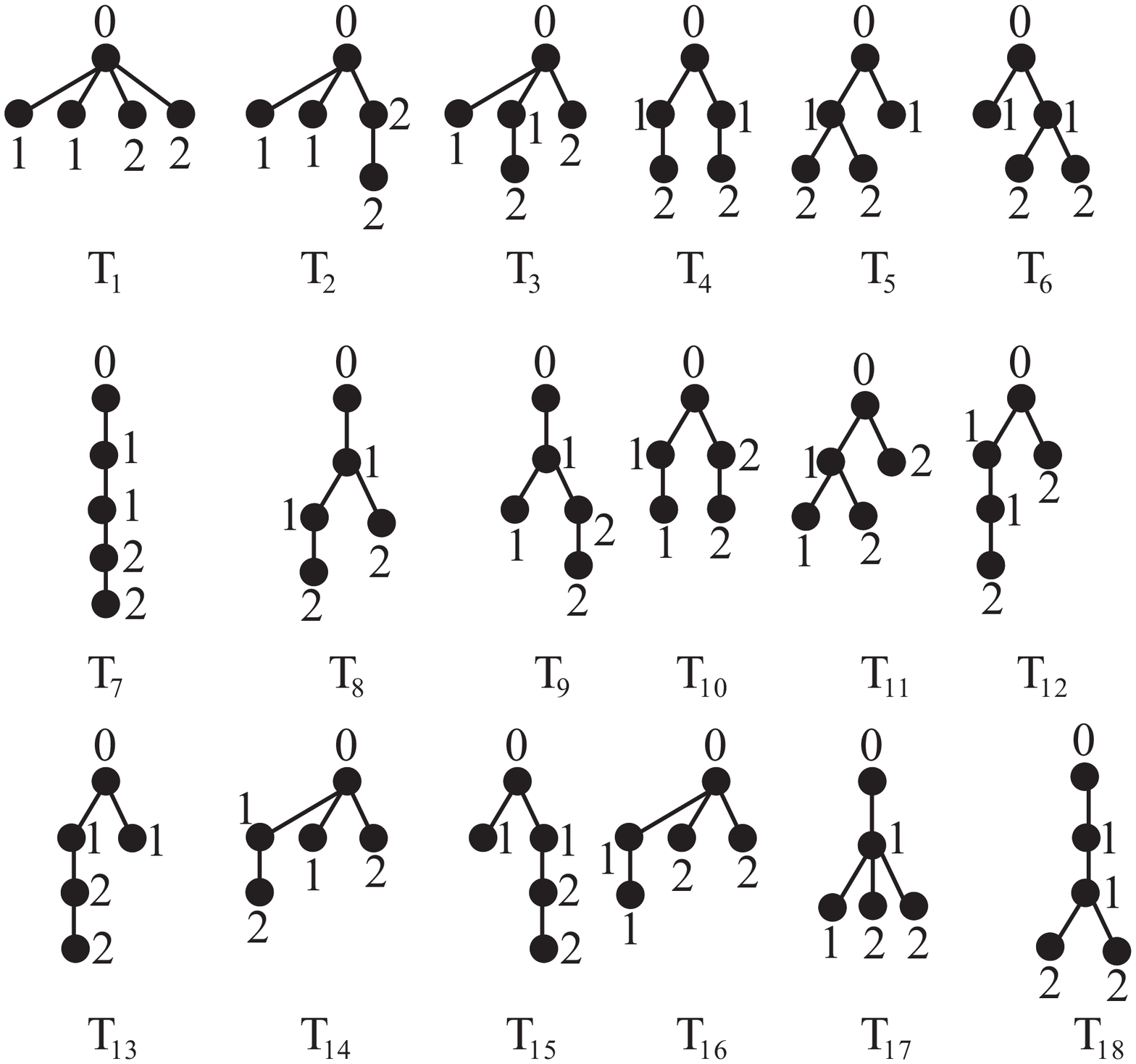}
	\end{center}
	\caption{\label{18wit}All $18$ weakly increasing trees on the multiset $\{1^2,2^2\}$.}
	\end{figure}

Note that weakly increasing trees on $[n]:=\{1,2,\ldots,n\}$ are exactly {\em increasing trees} on $[n]$, while weakly increasing trees on $\{1^n\}$ are in obvious bijection with plane trees with $n$ edges. This enables us to study the unity between plane trees and increasing trees in the framework of weakly increasing trees. The enumerative  results obtained in~\cite{Lin2020} reflect that weakly increasing trees is a natural and nice generalization of plane trees and increasing trees: 
\begin{itemize}
\item  The number of weakly increasing trees on $M(p_1,\ldots,p_n)$ has a  compact product formula 
$$
|\T_M|=\frac{1}{1+N_n}\prod_{i=1}^n{N_i+p_i\choose p_i},
$$
where $N_i:=p_1+\cdots+p_i$ for each $i\in[n]$. 
\item The {\em $M$-Eulerian--Narayana polynomial}, which  interpolates between  the Eulerian polynomial (when $M=[n]$) and the Narayana polynomial (when $M=\{1^n\}$), was defined by 
\begin{equation}\label{def:MEul}
A_{M}(t)=\sum_{T\in\T_{M}}t^{\leaf(T)},
\end{equation}
where $\leaf(T)$ denotes the number of leaves of $T$. The $\gamma$-positivity of $A_{M}(t)$ has an unified group action proof which possesses an unexpected application in interpreting the $\gamma$-coefficients of the {\em $k$-multiset Eulerian polynomials} (see~\cite{Lin2020} for $k=2$ and~\cite{Lin2021} for general $k$). 
\item  There are connections between $M$-Eulerian--Narayana polynomials for the multiset $M=\{1^2,2^2,\ldots,n^2\}$ (resp.~$M=\{1^2,2^2,\ldots,(n-1)^2,n\}$) and  Savage and Schuster's {\em$\s$-Eulerian polynomials} for the sequence $\s=(1, 1,3,2,5,3, \ldots, 2n-1,n,n+1)$ (resp.~$\s= (1, 1,3,2,5,3, \ldots, 2n-1,n)$).
\end{itemize}

In this paper, we continue to investigate the weakly increasing trees by considering several classical tree statistics related to the degrees and levels of nodes. The {\em degree} (also called {\em out-degree}) of a node in a tree  is the number of its children and the {\em level} of a node is measured by the number of edges lying on the unique path from the root to it. So the root lies at level $0$. For $T\in\T_M$, the six tree statistics on $T$ that we consider are: 
\begin{itemize}
\item The number of nodes of degree $q$ in $T$, denoted by $\deg_q(T)$.  In particular, $\deg_0(T)=\leaf(T)$. 
\item The number of nodes of degree $q$ in odd levels of $T$, denoted by $\od_q(T)$.
\item The number of nodes in even levels of $T$, denoted by $\el(T)$.
\item The number of odd-degree nodes in $T$, denoted by $\o(T)$.
\item The number of even-degree nodes in odd (resp.~even) levels of $T$, denoted by $\oe(T)$ (resp.~$\ee(T)$).
\end{itemize}

The above six tree statistics have been extensively studied in the literature for plane trees in~\cite{Chen1990,Dershowitz1980,Dershowitz1981,Deutsch2000,Eu2004,Riordan1975} and for increasing trees in~\cite{Bergeron1992,Chen2017,Kuznetsov1994,Liu2019}. Particularly, Deutsch \cite{Deutsch2000} introduced a bijection $\widehat{(\,)}$ from  the set $\mathcal{P}_n$ of plane trees with $n$ edges to itself such that for each $T\in\mathcal{P}_n$
 $$\deg_q(T)=\left\{\begin{array}{lll}\od_{q-1}(\widehat{T})&\text{if}&q\geq 1,\\
\el(\widehat{T})&\text{if}&q=0.\end{array}\right.$$
 Inspired by Deutsch's result, a natural problem arises:  is the above equidistribution still holds on increasing trees or even on weakly increasing trees? The answer to this question is affirmative and it turns out that Deutsch's bijection $\widehat{(\,)}$ can be extended\footnote{We learn that Shishuo Fu and his master student have also observed such extension independently.} to weakly increasing trees. 

%

\begin{figure}
\includegraphics[width=8.5cm,height=4cm]{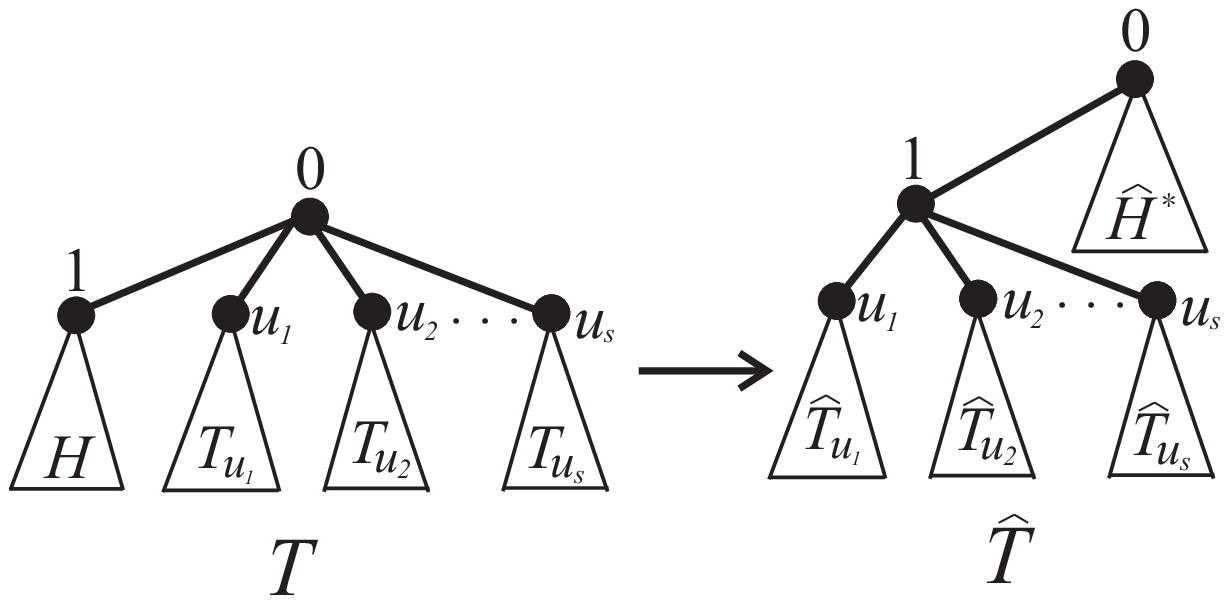}\caption{The construction of $\widehat{T}$.}\label{deg1}
\end{figure}

In fact, Deutsch's bijection $\widehat{(\,)}$ can be generalized directly to $\widehat{(\,)}=\widehat{(\,)}_M:\T_M\rightarrow\T_M$ by taking the labels of trees into account as follows. The mapping $\widehat{(\,)}$ is defined inductively.  Firstly, set $\widehat{\emptyset}=\emptyset$.
Let $T$ be a nonempty weakly increasing tree in $\mathcal{T}_M$.  For any node $v$ of $T$ let $T_v$ denote the  subtree of $T$ induced by $v$ and its descendants. Clearly, the leftmost child of the root $0$ in $T$ must be a node of label $1$ and let $H$ denote the subtree of $T$ induced by this $1$ and its descendants. 
Suppose that $u_1,\ldots,u_s$ are all children of the root $0$ other than the leftmost child in $T$ from left to right. We construct the weakly increasing tree $\widehat{T}$ as follows:
\begin{itemize}
\item Change the label $1$ of the root of $\widehat{H}$ to $0$ and denote by $\widehat{H}^*$ the resulting tree;
\item Attach an isolate node $1$ to the root $0$ of $\widehat{H}^*$ such that it is the leftmost child of $0$;
\item Make $\widehat{T}_{u_1},\ldots, \widehat{T}_{u_s}$ the branches of the above new node $1$ from left to right.
\end{itemize}
 See Fig.~\ref{deg1}   for graphical descriptions of $\widehat{T}$.
By induction on the number of edges of the tree, we obtain the following generalization of Deutsch's result.

\begin{theorem}\label{bijection-Weakly} Let $M=\{1^{p_1},2^{p_2},\ldots,n^{p_n}\}$. The mapping $\widehat{(\,)}:\T_M\rightarrow\T_M$ is a bijection satisfying 
$$\deg_q(T)=\left\{\begin{array}{lll}\od_{q-1}(\widehat{T})&\text{if}&q\geq 1\\
\el(\widehat{T})&\text{if}&q=0\end{array}\right.$$
for any $T\in\mathcal{T}_M.$
\end{theorem}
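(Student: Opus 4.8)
The plan is to prove Theorem~\ref{bijection-Weakly} by induction on the number of edges of $T$, establishing \emph{simultaneously} that $\widehat{(\,)}$ lands in $\T_M$, that it is a bijection, and that it transports the statistics as claimed. A preliminary remark makes the induction run smoothly: although the recursion is phrased for a tree whose root carries the label $0$, the construction uses only the \emph{structure} of $T$ together with its labels, and is equally meaningful for any weakly increasing tree rooted at an arbitrary smallest label $m$. Indeed, decomposing $T$ into its leftmost branch $H$ (rooted at the smallest child label $m'$) and the remaining branches $T_{u_1},\dots,T_{u_s}$, the recursion is applied to $H$ and to each $T_{u_i}$, whose roots carry the labels $m'$ and the labels of $u_1,\dots,u_s$. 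I would therefore prove the statement in this slightly more general structural form, so that the inductive hypothesis applies verbatim to $H$ and the $T_{u_i}$; since the statistics $\deg$, $\od$, $\el$ depend only on tree shape and level parities, this costs nothing.

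The first and, I expect, the main obstacle is to check that $\widehat{T}\in\T_M$, as this is precisely the content beyond Deutsch's unlabeled bijection. I would verify three points by induction. First, $\widehat{(\,)}$ preserves the root label and the multiset of all labels: by the inductive hypothesis $\widehat{H}$ and $\widehat{T_{u_i}}$ carry the labels of $H$ and $T_{u_i}$ and keep their roots, while relabeling the root of $\widehat{H}$ from $m'$ to $m$ and inserting a fresh node labeled $m'$ is label-neutral, so $\widehat{T}$ carries $M\cup\{0\}$ and has root $0$. Second, conditions (ii) and (iii): the only nodes whose child-structure is altered are the root of $\widehat{T}$ and the fresh node; at the fresh node the children are the roots of $\widehat{T_{u_1}},\dots,\widehat{T_{u_s}}$, whose labels are those of $u_1,\dots,u_s$ and hence weakly increasing by (iii) for $T$, and the paths through it satisfy $m\le m'\le$ (label of $u_i$); at the root of $\widehat{T}$ the new leftmost label $m'$ is $\le$ the leftmost old child of the root of $\widehat{H}$, and lowering the root label from $m'$ to $m$ keeps every root-to-leaf path weakly increasing. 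Third, bijectivity: I would exhibit the inverse by reading $\widehat{T}$ from left to right — the leftmost child of the root is the fresh node, its branches are $\widehat{T_{u_1}},\dots,\widehat{T_{u_s}}$, and the root together with its remaining branches (after relabeling $0\mapsto m'$) is $\widehat{H}$ — and invert recursively; as $\T_M$ is finite, checking that this recursion undoes each step suffices.

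Granting well-definedness, the statistic transport is a clean induction once one records how degrees and level parities move. The key facts are: the root of $T$, of degree $d=s+1$, corresponds to the fresh node of $\widehat{T}$, which sits at the odd level $1$ and has degree $s$; the nodes of $\widehat{H}$ retain their degrees and levels inside $\widehat{T}$, except that the root of $\widehat{H}$ gains one child (the fresh node) yet stays at the even level $0$; and each node of $\widehat{T_{u_i}}$ retains its degree while its level increases by exactly $2$, preserving its parity. These give, for every $q\ge 1$,
\begin{equation}
\deg_q(T)=[\,q=d\,]+\deg_q(H)+\sum_{i=1}^s\deg_q(T_{u_i})
\end{equation}
and, grouping the four families of nodes of $\widehat{T}$,
\begin{equation}
\od_{q-1}(\widehat{T})=[\,q-1=s\,]+\od_{q-1}(\widehat{H})+\sum_{i=1}^s\od_{q-1}(\widehat{T_{u_i}}),
\end{equation}
where $[\,\cdot\,]$ is the Iverson symbol and the root of $\widehat{T}$ contributes nothing to $\od$ since it lies at an even level. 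As $d=s+1$, the two brackets coincide, and the inductive hypotheses $\deg_q(H)=\od_{q-1}(\widehat{H})$ and $\deg_q(T_{u_i})=\od_{q-1}(\widehat{T_{u_i}})$ settle the case $q\ge1$.

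For $q=0$ the same bookkeeping yields $\el(\widehat{T})=\el(\widehat{H})+\sum_{i=1}^s\el(\widehat{T_{u_i}})$: the fresh node lies at an odd level and the extra child of the root of $\widehat{H}$ does not affect the even-level count, while the (shared) root of $\widehat{H}$ and $\widehat{T}$ is counted consistently; the hypotheses $\deg_0=\el$ on $H$ and the $T_{u_i}$ then give $\deg_0(T)=\el(\widehat{T})$. The base case is the single-node tree, on which $\widehat{(\,)}$ is the identity and both identities are immediate, completing the induction.
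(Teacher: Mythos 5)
Your proof is correct and follows essentially the same route as the paper: the paper disposes of this theorem with a single sentence ("By induction on the number of edges of the tree"), and your argument is exactly that induction carried out in full, with the recursive construction's well-definedness, its recursive inverse, and the statistic bookkeeping (root of $T$ of degree $s+1$ versus the fresh node of degree $s$ at odd level $1$, the nodes of $\widehat{H}$ keeping their levels, and the nodes of each $\widehat{T}_{u_i}$ having their levels shifted by exactly $2$) made explicit. Your preliminary remark that the recursion must be stated for weakly increasing trees rooted at an arbitrary minimal label is a genuine point of care that the paper glosses over, since it applies $\widehat{(\,)}$ to $H$ and the $T_{u_i}$, whose roots are not labeled $0$.
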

 Theorem~\ref{bijection-Weakly} is new even for increasing trees, i.e., when $M=[n]$. It reflects again the wonderful unity between plane trees and increasing trees. One interesting consequence of Theorem~\ref{bijection-Weakly} is 
 $$
 \sum_{T\in\T_{M}}t^{\leaf(T)}=\sum_{T\in\T_{M}}t^{\el(T)},
 $$
 which provides a new interpretation of the $M$-Eulerian--Narayana polynomial $A_M(t)$ defined by~\eqref{def:MEul}. This interpretation for the classical Eulerian polynomials in terms of increasing trees was known in~\cite{Br1}. 
Noticing 
$$
\o(T)=\sum\limits_{q\text{ odd}}\deg_q(T)\quad\text{and}\quad \oe(T)=\sum\limits_{q\text{  even}}\od_{q}(T),
$$
another interesting corollary of Theorem~\ref{bijection-Weakly} is the following equidistribution. 

\begin{corollary}\label{cor:WIT}
Fix a multiset $M$. For any $T\in\T_M$,  $\o(T)=\oe(\widehat{T})$. 
\end{corollary}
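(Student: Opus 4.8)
The plan is to derive this equidistribution directly from Theorem~\ref{bijection-Weakly} together with the two identities recorded just above the statement, namely $\o(T)=\sum_{q\text{ odd}}\deg_q(T)$ and $\oe(T)=\sum_{q\text{ even}}\od_q(T)$. No fresh bijection is needed: the map $\widehat{(\,)}$ already transports all the required information, so the corollary should fall out as a formal consequence once the indices are handled with care.

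First I would start from $\o(T)=\sum_{q\text{ odd}}\deg_q(T)$, where the sum runs over the odd positive integers $q$ (a node of degree $0$ is a leaf, which is even, so the term $q=0$ simply does not appear). Since every odd $q$ satisfies $q\geq 1$, I can invoke the first case of Theorem~\ref{bijection-Weakly}, which asserts $\deg_q(T)=\od_{q-1}(\widehat{T})$ for each such $q$. Substituting termwise yields $\o(T)=\sum_{q\text{ odd}}\od_{q-1}(\widehat{T})$.

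The key step is then the reindexing. As $q$ ranges over the odd positive integers $1,3,5,\ldots$, the shifted index $q-1$ ranges over the even nonnegative integers $0,2,4,\ldots$. Writing $r=q-1$, the sum becomes $\sum_{r\text{ even}}\od_r(\widehat{T})$, which is precisely $\oe(\widehat{T})$ by the second recorded identity applied to the tree $\widehat{T}$. This gives $\o(T)=\oe(\widehat{T})$, as desired.

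I do not anticipate a genuine obstacle, as the statement is essentially a parity bookkeeping exercise built on top of Theorem~\ref{bijection-Weakly}. The only point demanding attention is the indexing: one must confirm that the term $q=0$ is legitimately absent from the odd-degree sum, and that the shift $q\mapsto q-1$ sets up a clean correspondence between the odd degrees counted by $\o$ and the even degrees on odd levels counted by $\oe$ (the degree dropping parity and the level constraint being supplied by the $\od$ statistic in the theorem). Once this parity shift is verified, the equality is immediate.
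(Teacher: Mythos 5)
Your proposal is correct and is precisely the argument the paper intends: the identities $\o(T)=\sum_{q\text{ odd}}\deg_q(T)$ and $\oe(T)=\sum_{q\text{ even}}\od_q(T)$ stated just before the corollary, combined with the case $q\geq 1$ of Theorem~\ref{bijection-Weakly} and the reindexing $q\mapsto q-1$ from odd positive to even nonnegative integers. Nothing further is needed.
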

On the other hand, Liu~\cite{Liu2019} constructed recursively an involution on $\T_n$, the set of increasing trees on $[n]$, which proves a refined symmetry extension of Corollary~\ref{cor:WIT} for $M=[n]$. 

\begin{theorem}[Liu~\cite{Liu2019}]\label{thm:liu} There exists an involution $\phi$ on $\T_n$ such that 
$$
(\o,\oe,\ee)(T)=(\oe,\o,\ee)(\phi(T)) \quad\text{for each $T\in\T_n$}.
$$
\end{theorem}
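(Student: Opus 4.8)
The plan is to prove the stronger equidistribution that $(\o,\oe,\ee)$ has the same distribution over $\T_n$ as $(\oe,\o,\ee)$, and only afterwards upgrade this to an involution. Encode the joint distribution by
\[
S_n(x,y,z)=\sum_{T\in\T_n}x^{\o(T)}y^{\oe(T)}z^{\ee(T)},
\]
so that the asserted symmetry is precisely $S_n(x,y,z)=S_n(y,x,z)$. Since every tree in $\T_n$ has its root $0$ at the even level $0$, I first set up a two-colour exponential generating function that records the parity of the level of the root of each subtree. Let $E=E(t)$ (resp.\ $O=O(t)$) be the EGF for increasing trees whose root sits at an even (resp.\ odd) level, weighted by $x$ for each odd-degree node, by $z$ for each even-degree node on an even level, and by $y$ for each even-degree node on an odd level. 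The standard ``$y'=\Phi(y)$'' calculus for increasing trees then yields the coupled system
\[
E'=z\cosh O+x\sinh O,\qquad O'=y\cosh E+x\sinh E,
\]
with $E(0)=O(0)=0$; a short check (verifiable for small $n$) gives $S_n(x,y,z)=(n+1)!\,[t^{n+1}]E(t)$, so it suffices to show that $E$ is symmetric in $x$ and $y$.

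This system is \emph{not} symmetric in $x,y$, and the crux is to reduce it to one that is. Introduce the ``conjugate'' series $W:=x\cosh O+z\sinh O$ and $V:=x\cosh E+y\sinh E$. A direct computation gives $E''=W\,O'$ and $O''=V\,E'$, together with the algebraic relations $W^2=E'^2+x^2-z^2$ and $V^2=O'^2+x^2-y^2$. Differentiating these last two identities and cancelling $W$ and $V$ (both have constant term $x$, hence are invertible in $\mathbb{Q}(x,y,z)[[t]]$) yields $W'=E'O'=V'$; since $W(0)=V(0)=x$, I conclude $W\equiv V$. Writing $a=E'$, $b=O'$ and $c=W=V$, the pair $(E,O)$ collapses to the symmetric Jacobi-type system
\[
a'=bc,\qquad b'=ca,\qquad c'=ab,\qquad (a,b,c)(0)=(z,y,x).
\]

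With this normal form the symmetry becomes transparent. Swapping $x\leftrightarrow y$ only exchanges the initial data $b(0)\leftrightarrow c(0)$. If $(\hat a,\hat b,\hat c)$ denotes the solution with data $(z,x,y)$, then $(\hat a,\hat c,\hat b)$ solves the \emph{same} system (which is invariant under exchanging its second and third components) now with the original data $(z,y,x)$; by uniqueness of formal power-series solutions $\hat a=a$. Hence $E'=a$ is invariant under $x\leftrightarrow y$, and integrating from $E(0)=0$ shows that $E$, and therefore $S_n$, is symmetric in $x,y$, establishing the equidistribution. Finally, because the value-level transformation $(a,b,c)\mapsto(b,a,c)$ is itself an involution, equality of fibre sizes lets me pair each fibre over $\{(a,b,c),(b,a,c)\}$ with its partner via an arbitrary bijection and its inverse (acting as an arbitrary involution on the diagonal fibres where $a=b$), producing an involution $\phi$ on $\T_n$ realizing $(\o,\oe,\ee)(T)=(\oe,\o,\ee)(\phi(T))$.

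I expect the main obstacle to be exactly the middle step: guessing the conjugate functions $W,V$ and proving $W\equiv V$, which is what converts the asymmetric pair $(E,O)$ into the symmetric system and simultaneously accounts for the Jacobi-elliptic-function flavour underlying the whole circle of ideas. A more hands-on alternative, closer in spirit to Liu's original method, would be to build $\phi$ directly by induction on $n$ (for instance by tracking the node $n$, which is always a leaf, and toggling the degree parity of its parent), but controlling the level parities of entire subtrees under such surgery is delicate, which is why I would favour the algebraic route sketched above.
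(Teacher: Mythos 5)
Your proof is correct, but it takes a genuinely different route from the paper. The paper does not reprove Liu's theorem as stated: it cites it, and then supplies its own independent derivations of the symmetry — an explicit recursive involution $\widetilde{(\,)}$ on weakly increasing trees (Section~\ref{com:sym}), which at $M=[n]$ yields~\eqref{sym:inct} constructively, and a context-free grammar argument (Theorem~\ref{thm:schett}), where a grammatical labeling of nodes shows $D^n(x)=\sum_{T\in\T_n}x^{\ee(T)}y^{\oe(T)}z^{\o(T)}$ for the Schett grammar $\{x\to yz,\ y\to xz,\ z\to xy\}$, whose manifest symmetry in $y$ and $z$ gives the equidistribution (there is also an OGF proof for general multisets in Section~\ref{alg:sym}, via plane-tree functional equations, which is unrelated to your method). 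Your argument reaches the same algebraic heart — the invariance of the system $a'=bc$, $b'=ca$, $c'=ab$ under swapping two components, plus uniqueness of formal solutions — but arrives there analytically: the two-type EGF system $E'=z\cosh O+x\sinh O$, $O'=y\cosh E+x\sinh E$ is the standard increasing-tree calculus, and your conjugate series $W,V$ with $W'=V'=E'O'$ and $W(0)=V(0)=x$ is a clean device for collapsing the asymmetric pair into the symmetric Schett system; in fact $a=E'$ is exactly the exponential flow $\sum_n D^n(\cdot)\,t^n/n!$ of the paper's grammar applied to the variable recording $\ee$, so your computation is a generating-function shadow of the grammar proof. I verified the details (the identities $W^2=E'^2+x^2-z^2$ and $V^2=O'^2+x^2-y^2$, the cancellation of $W$ and $V$, legitimate since both have constant term $x$ and one works in an integral domain, and the swap-plus-uniqueness step); all are sound. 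Two caveats. First, the identity $S_n(x,y,z)=(n+1)!\,[t^{n+1}]E(t)$ needs no ``check for small $n$'': it is the definition of $E$ combined with the Bergeron--Flajolet--Salvy equation you already invoke, and you should say so. Second, your involution is non-constructive (arbitrary bijections between equinumerous fibres, the identity on diagonal fibres); this does establish the existence statement as literally phrased, but it is weaker than what Liu and the paper deliver, namely explicit involutions — and the paper's $\widetilde{(\,)}$ has the further payoff of working for arbitrary multisets, which is what yields Theorem~\ref{weak-o-oe-ee}.
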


It is Corollary~\ref{cor:WIT} and Theorem~\ref{thm:liu} that motivates the following refined symmetry distribution for weakly increasing trees. 
 \begin{theorem}\label{weak-o-oe-ee} Let $M=\{1^{p_1},2^{p_2},\ldots,n^{p_n}\}$. There exists an involution $\widetilde{(\,)}:\T_M\rightarrow\T_M$ such that 
$$
(\o,\oe,\ee)(T)=(\oe,\o,\ee)(\widetilde{T}) \quad\text{for each $T\in\T_M$}.
$$
Consequently, 
\begin{equation}\label{Weak:thm}
\sum_{T\in\mathcal{T}_M}x^{\ee(T)}y^{\oe(T)}z^{\o(T)}=\sum_{T\in\mathcal{T}_M}x^{\ee(T)}y^{\o(T)}z^{\oe(T)}.\end{equation}
 \end{theorem}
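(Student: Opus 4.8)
The plan is to construct the involution $\widetilde{(\,)}$ recursively, extending Liu's involution $\phi$ of Theorem~\ref{thm:liu} from increasing trees to the multiset setting, and to record the distributional consequence \eqref{Weak:thm} by an independent generating-function computation that also serves as a check. The organizing principle is the alternation of level parities: in any $T\in\T_M$ the root $0$ sits at the even level $0$, its children at the odd level $1$, and so on, so that the three statistics are governed by two local rules, namely ``an even-level node of degree $k$ contributes $x$ if $k$ is even and $z$ if $k$ is odd'' and ``an odd-level node of degree $k$ contributes $y$ if $k$ is even and $z$ if $k$ is odd.'' Thus the statistic to be preserved, $\ee$, is carried only by even-level even-degree nodes, while the two statistics to be exchanged, $\o$ and $\oe$, are carried by all odd-degree nodes and by odd-level even-degree nodes respectively.

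First I would fix the recursive decomposition of a weakly increasing tree along this alternation: write $T$ as its root together with the ordered sequence of principal subtrees hanging at the odd level, each of which is again decomposed at the next (even) level. On this decomposition I would define $\widetilde{(\,)}$ by a local move attached to each gadget ``an odd-level node together with its even-level children'' that toggles the relevant degree parities so as to interchange the $y$- and $z$-weights, composed with recursion into the deeper subtrees; the construction should specialize to Liu's $\phi$ when $M=[n]$ and to a new plane-tree involution when $M=\{1^n\}$. I would then prove by induction on the number of edges that $\widetilde{(\,)}$ is well defined on $\T_M$ (conditions (i)--(iii) are preserved), that it is an involution ($\widetilde{\widetilde{T}}=T$), and that $(\o,\oe,\ee)(T)=(\oe,\o,\ee)(\widetilde T)$, by tracking how each local move redistributes the four node-classes (even/odd level $\times$ even/odd degree).

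The main obstacle is exactly this label bookkeeping. The degree-parity exchange naturally relocates nodes between adjacent levels (as is already forced by the cherry $\leftrightarrow$ path exchange for $M=\{1^2\}$ and the path $\leftrightarrow$ star exchange for $M=\{1^3\}$), and such relocations can violate the weakly increasing conditions (ii) and (iii) unless the move is designed to transport entire label-compatible blocks and to reorder siblings back into weakly increasing order. Reconciling Liu's recursion, which exploits the strictness of increasing labels, with the repeated labels permitted in $M$ is where the delicate case analysis lies; I expect the correct move to act on maximal runs of equal labels rather than on single nodes.

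For the identity \eqref{Weak:thm} itself I would give a quick algebraic proof. Let $E=E(x,y,z)$ and $O=O(x,y,z)$ be the generating functions of subtrees whose root is declared to lie at an even, respectively odd, level, with a variable $w$ marking nodes and weights $x^{\ee}y^{\oe}z^{\o}$ assigned by the two local rules above; the alternation gives the system $E=w(x+zO)/(1-O^2)$ and $O=w(y+zE)/(1-E^2)$. Eliminating $O$ and using the identity $z(1-E^2)+E(y+zE)=z+yE$ collapses the equation for $E$ to
\begin{equation*}
E(1-E^2)^2=wx(1-E^2)^2+w^2(y+zE)(z+yE),
\end{equation*}
whose right-hand side is manifestly invariant under $y\leftrightarrow z$; since $E$ is the unique power series vanishing at $w=0$, it is symmetric in $y$ and $z$, which is \eqref{Weak:thm} for $M=\{1^n\}$. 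The remaining step is to promote this to arbitrary $M$ by refining $E$ and $O$ to track the multiset labels while preserving the parity algebra that produced the symmetric factor $(y+zE)(z+yE)$; since that factor depends only on the even/odd bookkeeping and not on the labels, one expects the symmetry to survive, though making the labelled functional equation rigorous is the analytic counterpart of the combinatorial obstacle above.
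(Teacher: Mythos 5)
Your proposal does not yet contain a proof; both of its halves stop exactly at the point where the actual difficulty lies. For the involution, you propose to extend Liu's involution $\phi$ of Theorem~\ref{thm:liu} to multisets, but you never define the ``local move'': you only postulate that a move exists which toggles degree parities, and you yourself identify the fatal issue (relocated nodes can violate the weakly increasing conditions, and repeated labels break Liu's recursion, which exploits strictness of labels). That issue is not a technicality to be deferred --- it is the whole problem, and the paper's authors state explicitly that they see no way to extend Liu's involution to $\T_M$. The paper's involution $\widetilde{(\,)}$ is instead built in the spirit of Deutsch's bijection $\widehat{(\,)}$: decompose $T$ around the leftmost child $1$ of the root, the leftmost child $x$ of that node and its closest sibling $y$; exchange the roles of $x$ and $y$, exchange their siblings with their children, and recurse (applying both $\widehat{(\,)}$ and $\widetilde{(\,)}$ to the pieces). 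Nothing in your sketch produces this construction, nor any concrete substitute for it.

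For the identity \eqref{Weak:thm}, your generating-function computation is correct and clean for $M=\{1^n\}$: the system $E=w(x+zO)/(1-O^2)$, $O=w(y+zE)/(1-E^2)$ does collapse to $E(1-E^2)^2=wx(1-E^2)^2+w^2(y+zE)(z+yE)$, whose manifest $y\leftrightarrow z$ symmetry, together with uniqueness of the power-series solution, gives the plane-tree case (this is tidier than the paper's quintic \eqref{alg:Nw=z}). But the promotion to general $M$ is not a routine refinement, and your two-class setup cannot support it as stated. The paper builds $\T_{\{1^{p_1},\ldots,n^{p_n}\}}$ from $\T_{\{1^{p_1},\ldots,(n-1)^{p_{n-1}}\}}$ by attaching plane trees labeled $n$ at every node, which translates into substitutions on the generating function; how a node's weight transforms under this attachment depends on the parity of its \emph{level}, not just on its current weight. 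An odd-degree node at even level and an odd-degree node at odd level both carry your single variable $z$, yet they must be substituted differently, so the substitution is not even well defined on $E(x,y,z)$. This forces the four-variable refinement $x^{\oe}y^{\ee}z^{\oo}w^{\eo}$ used in the paper, and after that refinement the symmetry is no longer manifest: the paper must verify, by a genuine computation resting on the quintic \eqref{alg:N}, that $w-z$ is invariant under the substitutions, and that $xz$ and $x+z$ transform through $y,z$-symmetric expressions, before an induction on the number of distinct labels yields \eqref{Weak:thm}. Your closing sentence concedes that this step is missing; it is the analytic core of the argument, not a corollary of the $n=1$ case.
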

 
 Our construction of involution  $\widetilde{(\,)}$ on weakly increasing trees, having close flavor as Deutsch's bijection $\widehat{(\,)}$, is essentially different with Liu's involution $\phi$ on increasing trees. We have no idea how Liu's involution can be extended to weakly increasing trees.
 
 The {\em Jacobi elliptic function} $\sn(u,\alpha)$ may be (see~\cite{Flajolet1989}) defined by the inverse of an elliptic integral:
$$
\sn(u,\alpha)=y\quad\text{iff}\quad u=\int_0^y\frac{\der t}{\sqrt{(1-t^2)(1-\alpha^2t^2)}},
$$
where $\alpha\in(0,1)$ is a real number. The other two functions are given by 
$$
\cn(u,\alpha)=\sqrt{1-\sn^2(u,\alpha)}\quad\text{and}\quad \dn(u,\alpha)=\sqrt{1-\alpha^2\sn^2(u,\alpha)}.
$$
 The Taylor expansions of $\sn(z,\alpha), \cn(z,\alpha)$ and $\dn(z,\alpha)$ read 
\begin{align*}
\sn(u,\alpha)&=u-(1+\alpha^2)\frac{u^3}{3!}+(1+14\alpha^2+\alpha^4)\frac{u^5}{5!}\\
&\quad-(1+135\alpha^2+135\alpha^4+\alpha^6)\frac{u^7}{7!}+\cdots, \\
\cn(z,\alpha)&=1-\frac{u^2}{2!}+(1+4\alpha^2)\frac{u^4}{4!}-(1+44\alpha^2+16\alpha^4)\frac{u^6}{6!}+\cdots,\\
\dn(z,\alpha)&=1-\alpha^2\frac{u^2}{2!}+(4+\alpha^2)\frac{u^4}{4!}-(16+44\alpha^2+\alpha^4)\frac{u^6}{6!}+\cdots.
\end{align*} 
 
 Consider the derivative operator on the polynomials in three variables defined by 
 $$
 D(x)=yz,\quad D(y)=xz\quad\text{and}\quad D(z)=xy.
 $$
The polynomial $S_n(x,y,z):=D^n(x)$ is known as the {\em$n$-th Schett polynomial} (according to Dumont~\cite{Dumont1979}) which has the form:
\begin{align*}
S_{2m}(x,y,z)&=\sum_{i,j\geq0} s_{2m,i,j}x^{2i+1}y^{2j}z^{2m-2i-2j}, \\
S_{2m+1}(x,y,z)&=\sum_{i,j\geq0} s_{2m+1,i,j}x^{2i}y^{2j+1}z^{2m+1-2i-2j}.
\end{align*} 
The first values of $S_n(x,y,z)$ are: 
\begin{align*}
S_0&=x,\quad S_1=yz,\quad S_2=xy^2+xz^2,\quad S_3=y^3z+yz^3+4x^2yz,\\
S_4&=xy^4+14xy^2z^2+xz^4+4x^3y^2+4x^3z^2.
\end{align*}
The main result of Schett~\cite{Schett1976} was to prove that 
\begin{itemize}
\item the coefficient of $(-1)^n\alpha^{2j}u^{2n+1}/(2n+1)!$ in the Taylor expansion of $\sn(u,\alpha)$ equals $s_{2n,0,j}$ or $s_{2n+1,0,j}$;
 \item the coefficient of $(-1)^n\alpha^{2i}u^{2n}/(2n)!$ (resp.~$(-1)^n\alpha^{2n-2i}u^{2n}/(2n)!$) in the Taylor expansion of $\cn(u,\alpha)$ (resp.~$\dn(u,\alpha)$) equals $s_{2n-1,i,0}$ or $s_{2n,i,0}$.
 \end{itemize}
 This indicates that Schett polynomials is a generalization of the Jacobi elliptic functions.
 Schett also noticed that $S_m(1,1,1)=\sum_{i,j\geq0} s_{m,i,j}=m!$, which inspired Dumont~\cite{Dumont1979,Dumont1981} to find the first interpretation of the Schett polynomials in terms of the parity of {\em cycle peaks} in permutations. Our consideration of the parity of the degrees and levels of nodes in increasing trees leads to a new interpretation of the Schett polynomials $S_n(x,y,z)$. 
 \begin{theorem}\label{int:schett}
 The coefficient $s_{m,i,j}$ of the  Schett polynomials $S_m(x,y,z)$ counts the number of trees $T\in\T_m$ such that $\lfloor\ee(T)/2\rfloor=i$ and $\lfloor\oe(T)/2\rfloor=j$, namely, 
 $$
 S_m(x,y,z)=\sum_{T\in\mathcal{T}_m}x^{\ee(T)}y^{\oe(T)}z^{\o(T)}. 
 $$
 \end{theorem}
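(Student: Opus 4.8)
The plan is to show that the generating polynomial
$$
P_m(x,y,z):=\sum_{T\in\T_m}x^{\ee(T)}y^{\oe(T)}z^{\o(T)}
$$
satisfies the very recurrence that defines the Schett polynomials, namely $P_0=x$ together with $P_{m+1}=D(P_m)$. Since $S_0=x$ and $S_{m+1}=D(S_m)$, this forces $P_m=S_m$ for every $m$, which is the asserted identity; the coefficient statement $s_{m,i,j}=\#\{T:\lfloor\ee(T)/2\rfloor=i,\ \lfloor\oe(T)/2\rfloor=j\}$ then follows by reading off exponents from the known parity pattern of the monomials of $S_m$. The base case is immediate: $\T_0$ consists of the single tree having only the root $0$, which is a degree-$0$ (hence even-degree) node at level $0$ (hence even level), so $\ee=1$, $\oe=\o=0$, and $P_0=x$.

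For the inductive step I would use the standard recursive structure of increasing trees. As $m+1$ is the largest label, in any $T'\in\T_{m+1}$ the node $m+1$ must be a leaf, and deleting it yields a unique $T\in\T_m$ together with the node $v$ of $T$ to which it was attached, its position among the children of $v$ being forced to be rightmost by condition (iii). Conversely, attaching a new leaf labeled $m+1$ at any of the $m+1$ nodes $v$ of a tree $T\in\T_m$ produces a tree in $\T_{m+1}$. Thus $T'\mapsto(T,v)$ is a bijection, and it suffices to record how the triple $(\ee,\oe,\o)$ changes when a leaf is attached at $v$; this change alters only the degree of $v$ (which gains one child) and creates the new even-degree leaf $m+1$ at level $\mathrm{level}(v)+1$.

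The heart of the argument is matching this change against the three terms of $D(x^ay^bz^c)=a\,x^{a-1}y^{b+1}z^{c+1}+b\,x^{a+1}y^{b-1}z^{c+1}+c\,x^{a+1}y^{b+1}z^{c-1}$, where $(a,b,c)=(\ee(T),\oe(T),\o(T))$. I would split into three cases according to the type of $v$:
\begin{itemize}
\item if $v$ is even-degree on an even level (one of the $a$ nodes tracked by $x$), then $v$ becomes odd-degree while the new leaf lands on an odd level, so $(\ee,\oe,\o)\mapsto(a-1,b+1,c+1)$, matching $a\,x^{a-1}y^{b+1}z^{c+1}$;
\item if $v$ is even-degree on an odd level (one of the $b$ nodes tracked by $y$), then $v$ becomes odd-degree while the new leaf lands on an even level, so $(\ee,\oe,\o)\mapsto(a+1,b-1,c+1)$, matching $b\,x^{a+1}y^{b-1}z^{c+1}$;
\item if $v$ is odd-degree (one of the $c$ nodes tracked by $z$), then $v$ becomes even-degree and joins $\ee$ or $\oe$ according to the parity of its level, while the new leaf lands on a level of the opposite parity and joins the other of $\oe,\ee$, so in either case $(\ee,\oe,\o)\mapsto(a+1,b+1,c-1)$, matching $c\,x^{a+1}y^{b+1}z^{c-1}$.
\end{itemize}
Summing over all $T\in\T_m$ and all $m+1$ choices of $v$ then yields $P_{m+1}=D(P_m)$, completing the induction.

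The step I expect to require the most care is the third case. The statistic $\o$ lumps together odd-degree nodes on even and on odd levels, whereas $\ee$ and $\oe$ keep even-degree nodes split by level parity; this asymmetry is exactly what singles out $D$, whose action on $z$ ignores levels, as the correct operator. The key point to verify cleanly is that, when $v$ is odd-degree, the pair consisting of the newly-even node $v$ and the fresh leaf always contributes one unit to $\ee$ and one unit to $\oe$ \emph{regardless} of the parity of $\mathrm{level}(v)$, so that the two sub-cases collapse to the single monomial $x^{a+1}y^{b+1}z^{c-1}$ with total multiplicity $c$. Once this collapse is confirmed, the remaining bookkeeping in the first two cases is routine, and the identity $P_m=S_m$ follows.
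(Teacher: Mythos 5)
Your proof is correct and is essentially the same as the paper's: the paper establishes the identity $D^m(x)=\sum_{T\in\T_m}x^{\ee(T)}y^{\oe(T)}z^{\o(T)}$ by induction on $m$ using the very same decomposition (attach the new largest label as a rightmost leaf at any of the $m+1$ nodes), merely phrased in the language of Chen's context-free grammars via a grammatical labeling of nodes by $x$, $y$, $z$. Your three cases correspond exactly to the paper's substitution rules $x\to yz$, $y\to xz$, $z\to xy$, including the observation that the $z$-case collapses the two level parities into a single rule.
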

 In view of Theorem~\ref{int:schett}, the joint distribution in~\eqref{Weak:thm}
 $$
 S_M(x,y,z):=\sum_{T\in\mathcal{T}_M}x^{\ee(T)}y^{\oe(T)}z^{\o(T)}
 $$ 
  is named the {\em multiset Schett polynomial}  for the multiset $M$, extending the Jacobi elliptic functions from sets to multisets. In particular, we connect the Jacobi elliptic functions for plane trees to fighting fish with a marked tail originally  studied by  Duchi,  Guerrini,  Rinaldi and  Schaeffer~\cite{Duchi2017}. 
 
 Gamma-positive polynomials arise frequently in enumerative combinatorics and have recent impetus coming from enumerative geometry; see the two recent surveys by Br\"and\'en~\cite{Br2} and by Athanasiadis~\cite{Ath}. A univariate polynomial $f(x)$ of degree $n$  is said to be {\em$\gamma$-positive} if it can be expanded as 
 $$
 f(x)=\sum_{k=0}^{\lfloor\frac{n}{2}\rfloor}\gamma_{k}x^k(1+x)^{n-k}
 $$
 with $\gamma_k\geq0$. If such an expansion exists, then $f(x)$ is also palindromic (or symmetric) and unimodal. 
 Moreover, the $\gamma$-coefficients $\gamma_k$ usually (but not always) have nice combinatorial interpretations, which makes this theme  even more charming. The classical Eulerian polynomials, as well as their various generalizations~\cite{Br1,Lin2015,Lin2020}, are some of the typical examples arising from permutation statistics. 
 
 A bivariate polynomial $h(x,y)$ is said to be {\em homogeneous $\gamma$-positive}, if $h(x,y)$ is homogeneous and $h(x,1)$ is $\gamma$-positive. A trivariate polynomial $p(x,y,z)=\sum_{i}x^i s_i(y,z)$ is {\em partial $\gamma$-positive} if every $s_i(y,z)$ is homogeneous $\gamma$-positive. There has been recent interest in investigating partial $\gamma$-positive polynomials with combinatorial meanings~\cite{MMY2019,Ma2021,LMZ2021}. Generalizing the symmetry in~\eqref{Weak:thm}, we aim to prove the  partial $\gamma$-positivity of the {\em reduced multiset Schett polynomial}
 \begin{equation}\label{def:redsche}
 \hat{S}_M(x,y,z):=\sum_{T\in\mathcal{T}_M}x^{\lfloor\frac{\ee(T)}{2}\rfloor}y^{\lfloor\frac{\oe(T)}{2}\rfloor}z^{\lfloor\frac{\o(T)}{2}\rfloor}. 
 \end{equation}
 Noticing the relationships 
 \begin{equation}\label{rel:reduce}
 \#\{\text{nodes in $T$}\}\equiv \ee(T) \,\,(\mod\, 2)\quad\text{and}\quad \#\{\text{nodes in $T$}\}=\ee(T)+\oe(T)+\o(T),
 \end{equation}
  we see that the  reduced multiset Schett polynomial  $\hat{S}_M(x,y,z)$ encodes essentially the same information as $S_M(x,y,z)$. 
  
  \begin{theorem}\label{thm:action}
 Let $M=\{1^{p_1},2^{p_2},\ldots,n^{p_n}\}$ be a multiset with $p=\sum_{i=1}^np_i$.  The reduced multiset Schett polynomial $\hat{S}_M(x,y,z)$ has the partial $\gamma$-positivity expansion
\begin{equation}\label{eq:partial-gm}
\hat{S}_M(x,y,z)=\sum_{T\in\mathcal{T}_M}x^{\lfloor\frac{\ee(T)}{2}\rfloor}y^{\lfloor\frac{\oe(T)}{2}\rfloor}z^{\lfloor\frac{\o(T)}{2}\rfloor}=\sum_ix^i\sum_j\hat\gamma_{M,i,j}(yz)^j(y+z)^{\lfloor\frac{p}{2}\rfloor-i-2j},
\end{equation}
where $\hat\gamma_{M,i,j}$ enumerates  weakly increasing trees $T$ on $M$ with $\lfloor\frac{\ee(T)}{2}\rfloor=i$, $\act(T)=\lfloor\frac{p}{2}\rfloor-i-2j$ and $\eact(T)=0$. Consequently, 
\begin{itemize}
\item the polynomial $\hat{S}_M(x,y,z)$ is symmetric in $y$ and $z$, which is equivalent to the symmetry  in~\eqref{Weak:thm};
\item if $\hat{S}_M(x,y,z)=\sum_ix^i\hat{S}_{M,i}(y,z)$, then $\hat{S}_{M,i}(y,1)=\hat{S}_{M,i}(1,y)$ is palindromic and unimodal. 
\end{itemize}
 \end{theorem}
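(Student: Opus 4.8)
The plan is to prove the partial $\gamma$-positivity \eqref{eq:partial-gm} by constructing an abelian group action on $\T_M$ and reading off the $\gamma$-expansion from its orbits, in the spirit of the modified Foata--Strehl (valley-hopping) action. Concretely, to each $T\in\T_M$ I would attach a set of $\act(T)$ distinguished \emph{active} nodes and generators $\varphi_1,\ldots,\varphi_{\act(T)}$, where each $\varphi_\ell$ is a local surgery toggling the $\ell$-th active node between an \emph{even state} (an even-degree node on an odd level, contributing to $\oe$) and an \emph{odd state} (an odd-degree node, contributing to $\o$). The generators should be commuting involutions, so that $\langle\varphi_1,\ldots,\varphi_{\act(T)}\rangle\cong\mathbb{Z}_2^{\act(T)}$ acts on $\T_M$; I will set $\eact(T)$ (resp.\ $\oact(T)$) to be the number of active nodes currently in the even (resp.\ odd) state, so that $\act=\eact+\oact$ is constant along each orbit.

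First I would verify the action is well defined: every toggle must produce a genuine weakly increasing tree on the same multiset $M$, which is exactly where conditions (ii) and (iii) of the definition of $\T_M$ must be respected, and distinct generators must commute. The decisive invariance to establish is that $\ee(T)$, the number of even-degree nodes on even levels, is preserved by every generator, so that the $x$-grading $i=\lfloor\ee(T)/2\rfloor$ in \eqref{eq:partial-gm} is an orbit invariant. A naive swap of a single node between the even and odd states changes $\oe$ and $\o$ each by one, and thus alters the total reduced degree $\lfloor\oe(T)/2\rfloor+\lfloor\o(T)/2\rfloor=\lfloor p/2\rfloor-i$; forced by the parity relations in \eqref{rel:reduce}, each generator must instead act on a correlated pair of nodes, changing $\oe$ and $\o$ by exactly two in opposite directions. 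The net effect is to shift precisely one unit from $\lfloor\oe(T)/2\rfloor$ to $\lfloor\o(T)/2\rfloor$ while leaving both $\lfloor\ee(T)/2\rfloor$ and the total reduced degree unchanged.

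Next I would sum the weight $x^{\lfloor\ee(T)/2\rfloor}y^{\lfloor\oe(T)/2\rfloor}z^{\lfloor\o(T)/2\rfloor}$ over a single orbit. With $i$ fixed, along the orbit the $\act(T)$ active nodes toggle independently between contributing to $y$ and to $z$, producing the factor $(y+z)^{\act(T)}$, while the even-degree-odd-level and odd-degree nodes that are not active stay frozen and contribute a common factor $(yz)^{j}$ with $2j=\lfloor p/2\rfloor-i-\act(T)$. Choosing in each orbit the unique representative with $\eact(T)=0$, namely with all active nodes in the odd state, then yields exactly \eqref{eq:partial-gm}, where $\hat\gamma_{M,i,j}$ counts the weakly increasing trees $T$ on $M$ with $\lfloor\ee(T)/2\rfloor=i$, $\act(T)=\lfloor p/2\rfloor-i-2j$ and $\eact(T)=0$; in particular $\hat\gamma_{M,i,j}\ge 0$, which is the content of partial $\gamma$-positivity.

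The two stated consequences are then formal. Each summand $(yz)^{j}(y+z)^{\lfloor p/2\rfloor-i-2j}$ is symmetric in $y$ and $z$, so $\hat S_M(x,y,z)$ is symmetric in $y,z$; unravelling the floors through \eqref{rel:reduce} this is precisely the symmetry \eqref{Weak:thm}, and the full group element $\varphi_1\cdots\varphi_{\act(T)}$ reversing every active state realizes the involution $\widetilde{(\,)}$ of Theorem~\ref{weak-o-oe-ee}. Writing $\hat S_M=\sum_i x^i\hat S_{M,i}(y,z)$, each $\hat S_{M,i}$ is homogeneous $\gamma$-positive, whence $\hat S_{M,i}(y,1)$ is palindromic and unimodal by the standard implication recorded above. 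I expect the main obstacle to be the explicit definition of the active nodes and of the toggle surgery: one must exhibit a local modification of a weakly increasing tree that flips a correlated pair of nodes between the even and odd states, keeps the labels a weakly increasing tree on $M$, preserves $\ee$, and organizes these modifications into commuting involutions whose frozen ``locked'' nodes pair up to give exactly the factor $(yz)^{j}$. Verifying these compatibilities, rather than the bookkeeping of the orbit sums, is the heart of the argument.
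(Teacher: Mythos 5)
Your strategy coincides with the paper's: commuting local involutions indexed by the active nodes, orbits summed with the representative chosen to have $\eact=0$, yielding the expansion \eqref{eq:partial-gm}. But what you have written is a plan rather than a proof, and the two steps you defer at the end as ``the main obstacle'' are precisely the mathematical content of the paper's Section~\ref{sec:4}; neither is routine. First, the toggle itself. The paper does not perform the surgery on $\T_M$ directly; it transports everything through the rotation bijection $\rho:\T_M\rightarrow\B_M$ to weakly increasing binary trees, where the toggle $\Lambda_i$ is simply the exchange of left and right branches at the three nodes $u,x,y$ (Fig.~\ref{fig:involution}). Even then, for a genuine group action one needs the indexing of the generators to be constant along an orbit: the paper introduces a modified preorder and proves it invariant under every $\Lambda_i$ (Lemma~\ref{LEM:order}), and proves that $\Lambda_i$ keeps the $i$-th node active while flipping its parity (Lemma~\ref{LEM:bi-even->odd}). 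Your proposal never addresses how the set of active nodes and the names $\varphi_1,\ldots,\varphi_{\act(T)}$ survive a toggle; without such an invariance the ``action of $\mathbb{Z}_2^{\act(T)}$'' is not defined. Likewise the invariance of $\ee$ is not free (Lemma~\ref{LEM:bi-eler-action}, a case analysis), and the partner of an active node is not a fixed mate: it is the right child when one exists but the \emph{ancestor} otherwise (Definition~\ref{DEF:active-odd-even}, Lemma~\ref{LEM:dym}), so the ``correlated pair'' itself changes under the toggle.

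Second, and more seriously, your factor $(yz)^{j}$ from the frozen nodes is an unproved global claim, not bookkeeping. What is needed is that in \emph{every} tree the number of non-dynamic even-degree nodes on odd levels equals the number of non-dynamic odd-degree nodes (in the paper's notation $\ndoler(B)=\ndord(B)$, Lemma~\ref{lem:main}); only then do the frozen contributions to $y$ and to $z$ pair up into a power of $yz$, giving orbit weight $x^{i}(yz)^{j}(y^2+z^2)^{\act}$ rather than $x^{i}y^{a}z^{b}(y^2+z^2)^{\act}$ with $a\neq b$. This equality is not an orbit invariance and does not follow from any property of the toggles; the paper proves it by induction on $p$, removing the last node in the modified preorder and checking roughly ten cases. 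Until you exhibit the surgery and prove this pairing lemma, the orbit summation --- which you do carry out correctly, and which would formally yield \eqref{eq:partial-gm} and its two consequences --- has nothing to stand on. (A further small slip: the total reversal $\varphi_1\cdots\varphi_{\act(T)}$ produces an involution with the same distribution property as $\widetilde{(\,)}$, but there is no reason it equals the recursive involution of Theorem~\ref{weak-o-oe-ee}.)
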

 
 The two statistics $\act$ and $\eact$ concerned are defined as follows.

\begin{definition}\label{DEF:wit-active}
 Let $T\in\mathcal{T}_{M}$ and let $u$ be a node of $T$.
 Suppose that  $y$ (possibly empty) is the first brother to the right of $u$.  The node $u$ is {\bf\em active} if it satisfies the following three conditions:
\begin{enumerate}[(a)]
\item  the  level  of $u$ is odd;
\item $u$ is the $k$-th child (from left to right) of its parent with $k$ being odd;
\item either (i) $y$ is not empty and the degrees of $u$ and $y$ have the same parity, or (ii) $y$ is empty  and the degree of $u$  is odd.
\end{enumerate}
 Furthermore, $u$ is said to be {\bf\em active odd} or {\bf\em active even} according to the parity of the degree of $u$. Let $\act(T)$ (resp.~$\eact(T)$, $\oact(T)$) be the number of active (resp.~active even, active odd) nodes of $T$. 
\end{definition}

\begin{example}
Take $M=\{1^2,2^2\}$ for an example of expansion~\eqref{eq:partial-gm}, there are $18$ trees as displayed in Fig.~\ref{18wit}, which gives 
$$
\hat{S}_M(x,y,z)=3x(y+z)+(y+z)^2+8yz.
$$
The $\gamma$-coefficients $3$ counts the trees $T_4,T_{10}$ and $T_{17}$; $1$ counts $T_{7}$; $8$ counts the trees $T_3,T_8,T_9$ and the trees from $T_{12}$ to $T_{16}$. 
\end{example}

 The purpose of this paper is  threefold. One is to construct the involution $\widetilde{(\,)}, :\T_M\rightarrow\T_M$ to prove combinatorially  the refined symmetry~\eqref{Weak:thm}, which is provided in Section~\ref{com:sym}, as well as several relevant interesting consequences. The second is to study the algebraic aspect of the refined symmetry~\eqref{Weak:thm}, including proofs of Theorem~\ref{int:schett} and a conjecture of  Ma--Mansour--Wang--Yeh~\cite{Ma2019} via Chen's context-free grammar and a generating function proof of~\eqref{Weak:thm}, which are fulfilled in 
 Section~\ref{alg:sym}. The last is to develop a group action on weakly increasing binary trees to prove Theorem~\ref{thm:action}, which forms the content of Section~\ref{sec:4}.
 This paper reflects appropriately {\it again} (cf.~\cite{Foata})  the deep ideas of our  great master, M.P. Sch\"utzenberger: every algebraic relation is to be given a combinatorial counterpart and vice versa.

 \section{Combinatorics of the symmetry~\eqref{Weak:thm}}
 \label{com:sym}
 In this section, we construct the involution  $\widetilde{(\,)}:\T_M\rightarrow\T_M$ for Theorem~\ref{weak-o-oe-ee}. 
It will be defined recursively. 

For $M=\emptyset\text{ and }\{1\}$,  we let $\widetilde{T}=T$
 for any $T\in \mathcal{T}_M$.  If $M=[2]$ and $\{1^2\}$, we let  \begin{center}\begin{tabular}{|c|c||c|c|}\hline $T$&$\widetilde{T}$&$T$&$\widetilde{T}$\\
 \hline \includegraphics[width=0.3cm,height=0.8cm]{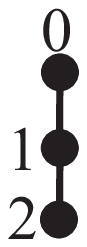}&\includegraphics[width=0.65cm,height=0.8cm]{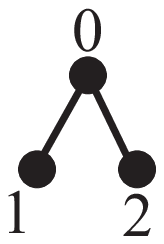}&\includegraphics[width=0.8cm,height=0.8cm]{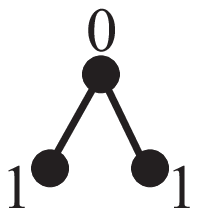}&\includegraphics[width=0.3cm,height=0.8cm]{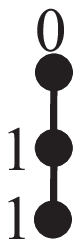}\\
\hline  \includegraphics[width=0.65cm,height=0.8cm]{2-it-1.eps}&\includegraphics[width=0.3cm,height=0.8cm]{2-it-2.eps}&\includegraphics[width=0.3cm,height=0.8cm]{2-wit-2.eps}&\includegraphics[width=0.8cm,height=0.8cm]{2-wit-1.eps}\\
\hline \end{tabular}.\end{center}
 For a general fixed multiset $M$ and $T\in\T_M$, we have the decomposition of $T$ (see the left graph of Fig.~\ref{invo}) as: 
 \begin{itemize}
 \item The leftmost child of the root $0$ is a node with label $1$. Let $x$ and $y$  be the leftmost child and the closest sibling of this special $1$ in $T$. It is possible that $x$ or $y$ may not exist, which does not affect our construction. 
 \item Let $v_1,\ldots,v_s$ be all the siblings of $x$ from left to right. Denote $F=T_x$ the subtree of $T$ induced by $x$ and its descendants.
 \item Let $u_1,\ldots,u_t$ be all the children  of $y$ from left to right. Denote $H$ the subtree of $T$ induced by the root and its children to the right of $y$ together with their descendants.
 \end{itemize}
 \begin{figure}[h!]
\includegraphics[width=14cm,height=4.8cm]{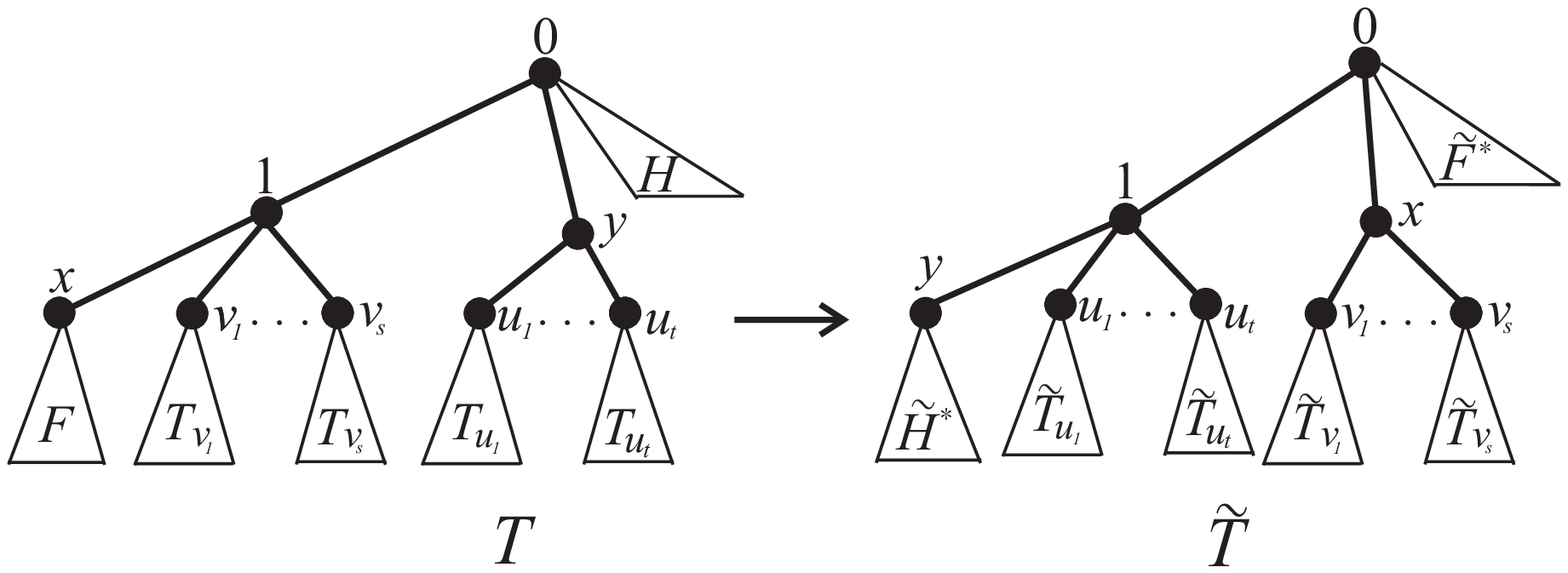}\caption{The construction of $\widetilde{T}$.}\label{invo}
\end{figure}
 The main idea underling the construction is to exchange the role of $x$ and $y$ and further exchange their siblings with their children. Let us define $\widetilde{T}$ recursively by the following steps (see again Fig.~\ref{invo} for nice visualization of our involution):
 \begin{itemize}
 \item[i)] Change the label $x$ of the root of $\widehat{F}$ to $0$ and denote by $\widehat{F}^*$ the resulting tree;
 \item[ii)] Attach a node $1$ and a node $x$ at the root  $0$ of $\widehat{F}^*$ as its first and second children, respectively; 
 \item[iii)] Change the label $0$ of the root of $\widehat{H}$ to $y$ and attach the resulting tree, denoted $\widehat{H}^*$, at the node $1$ as its leftmost branch; further attach the subtrees $\widetilde{T}_{u_1},\ldots,\widetilde{T}_{u_t}$ at the node $1$ as its branches from left to right;
 \item[iv)] Finally, attach the subtrees $\widetilde{T}_{v_1},\ldots,\widetilde{T}_{v_s}$ at the node $x$ as its branches from left to right and let the resulting tree be $\widetilde{T}$ (see the right graph in Fig.~\ref{invo}). 
 \end{itemize}
 For example, for $M=\{1^2,2^2\}$ in Fig.~\ref{18wit}, the mapping $\widetilde{(\,)}$ works as $\widetilde{T}_1=T_7, \widetilde{T}_2=T_{18}, \widetilde{T}_3=T_8, \widetilde{T}_4=T_{11}, \widetilde{T}_5=T_{10}, \widetilde{T}_6=T_{17}, \widetilde{T}_9=T_{15}, \widetilde{T}_{12}=T_{14}$ and $\widetilde{T}_{13}=T_{16}$.
  It is clearly from the above construction that $\widetilde{T}$ is a weakly increasing trees in $\T_M$ and so the mapping $\widetilde{(\,)}:\T_M\rightarrow\T_M$ is well-defined. Moreover, this mapping is an involution, as evident from the construction by induction on the size of $M$. 
 
 It remains to verify that 
 \begin{equation}\label{stat:invo}
(\o,\oe,\ee)(T)=(\oe,\o,\ee)(\widetilde{T}).
\end{equation}
 Let us first consider the number of odd-degree nodes. We need to distinguish three cases: 
 \begin{enumerate}
\item If both $x$ and $y$ exist, then
 \begin{align*}
 \o(T)&=\o(H)+\o(F)+\sum_{i=0}^s\o(T_{v_i})+\sum_{i=0}^t\o(T_{u_i})+\chi(\text{$s$ even})+\chi(\text{$t$ odd})\\
 &=\oe(\widetilde{H}^*)+\oe(\widetilde{F}^*)+\sum_{i=0}^s\oe(\widetilde{T}_{v_i})+\sum_{i=0}^t\oe(\widetilde{T}_{u_i})+\chi(\text{$s$ even})+\chi(\text{$t$ odd})=\oe(\widetilde{T}),
 \end{align*} 
 where $\chi(\mathsf{S})$ equals $1$, if the statement $\mathsf{S}$ is true; and $0$, otherwise. 
 \item If $x$ does not exist, then 
  \begin{align*}
 \o(T)&=\o(H)+\sum_{i=0}^s\o(T_{v_i})+\chi(\text{$t$ odd})\\
 &=\oe(\widetilde{H}^*)+\sum_{i=0}^t\oe(\widetilde{T}_{u_i})+\chi(\text{$t$ odd})=\oe(\widetilde{T}). 
 \end{align*} 
\item If $x$ does not exist, then 
 \begin{align*}
 \o(T)&=1+\o(F)+\sum_{i=0}^s\o(T_{v_i})+\chi(\text{$s$ even})\\
 &=1+\oe(\widetilde{F}^*)+\sum_{i=0}^s\oe(\widetilde{T}_{v_i})+\chi(\text{$s$ even})=\oe(\widetilde{T}). 
 \end{align*} 
 \end{enumerate}
 In either case, we have 
 $$\o(T)=\oe(\widetilde{T}).$$
 Since $\widetilde{(\,)}$ is an involution, it follows that 
 $$
 \oe(T)=\o(\widetilde{T})
 $$
  also holds. Finally, we need to consider the number of even-degree nodes on even levels: 
  \begin{align*}
 \ee(T)&=\ee(H)+\ee(F)+\sum_{i=0}^s\ee(T_{v_i})+\sum_{i=0}^t\ee(T_{u_i})\\
 &=\ee(\widetilde{H}^*)+\ee(\widetilde{F}^*)+\sum_{i=0}^s\ee(\widetilde{T}_{v_i})+\sum_{i=0}^t\ee(\widetilde{T}_{u_i})=\ee(\widetilde{T}). 
 \end{align*} 
 This finishes the proof that the involution $\widetilde{(\,)}:\T_M\rightarrow\T_M$ has the feature~\eqref{stat:invo} and provides the combinatorial proof of symmetry~\eqref{Weak:thm}.

 \subsection{Relevant consequences}
 Our construction of $\widetilde{(\,)}$ is  more intuitive than Liu's involution on $\T_n$ and provides an alternative approach to his refined symmetry on increasing trees
  \begin{equation}\label{sym:inct}
\sum_{T\in\mathcal{T}_n}x^{\o(T)}y^{\oe(T)}z^{\ee(T)}=\sum_{T\in\mathcal{T}_n}x^{\oe(T)}y^{\o(T)}z^{\ee(T)}.\end{equation}
 On the other hand, our involution $\widetilde{(\,)}$ for $M=\{1^n\}$ provides a combinatorial proof of the following refined symmetry for plane trees, which seems new to the best of our knowledge. 
 \begin{corollary} For $n\geq1$, we have the refined symmetry
 \begin{equation}\label{sym:plane}
\sum_{T\in\mathcal{P}_n}x^{\o(T)}y^{\oe(T)}z^{\ee(T)}=\sum_{T\in\mathcal{P}_n}x^{\oe(T)}y^{\o(T)}z^{\ee(T)}.\end{equation}
 \end{corollary}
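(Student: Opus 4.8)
The plan is to obtain this corollary as the special case $M=\{1^n\}$ of Theorem~\ref{weak-o-oe-ee}, so that no genuinely new construction is needed; the entire task is to match the two settings. First I would make precise the bijection alluded to in the introduction between $\T_{\{1^n\}}$ and $\mathcal{P}_n$. A weakly increasing tree on $\{1^n\}$ has $n+1$ nodes carrying labels from $\{1^n\}\cup\{0\}$: the root is forced to receive the label $0$ and every other node receives the label $1$. Since all non-root labels coincide, conditions (ii) and (iii) of the definition of weakly increasing trees impose no constraint whatsoever on the underlying plane tree. Hence the map forgetting all labels is a bijection from $\T_{\{1^n\}}$ onto $\mathcal{P}_n$, with inverse relabelling the root of a plane tree by $0$ and every remaining node by $1$.

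Next I would observe that this bijection is structure-preserving in the strongest sense: it leaves the rooted plane-tree shape untouched, hence preserves the degree and the level of every node. Consequently the three statistics $\o$, $\oe$ and $\ee$, being defined purely through the parities of degrees and of levels, are transported verbatim across the bijection. Specializing the identity~\eqref{Weak:thm} to $M=\{1^n\}$ and pushing it through the bijection then yields
$$\sum_{T\in\mathcal{P}_n}x^{\ee(T)}y^{\oe(T)}z^{\o(T)}=\sum_{T\in\mathcal{P}_n}x^{\ee(T)}y^{\o(T)}z^{\oe(T)},$$
and interchanging the names of the variables $x$ and $z$ converts this into exactly~\eqref{sym:plane}. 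Moreover, since the involution $\widetilde{(\,)}$ preserves the multiset underlying each tree, its restriction to $\T_{\{1^n\}}$ is again an involution, which transports to an explicit involution on $\mathcal{P}_n$ realizing the swap of $\o$ and $\oe$ while fixing $\ee$; thus the symmetry comes with a combinatorial witness.

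The only point that demands any care — and I expect it to be the sole, mild obstacle — is the bookkeeping of which variable carries which statistic, so that the relabelling $x\leftrightarrow z$ is correctly seen to turn~\eqref{Weak:thm} into the stated form~\eqref{sym:plane}. Everything else follows automatically from the definitions and from Theorem~\ref{weak-o-oe-ee}.
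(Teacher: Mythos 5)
Your proposal is correct and coincides with the paper's own argument: the paper likewise obtains \eqref{sym:plane} by restricting the involution $\widetilde{(\,)}$ of Theorem~\ref{weak-o-oe-ee} to $M=\{1^n\}$, under the label-forgetting bijection between $\T_{\{1^n\}}$ and $\mathcal{P}_n$ that preserves degrees and levels of all nodes. Your extra care with the variable bookkeeping ($x\leftrightarrow z$) is exactly the right check, and nothing further is needed.
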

 
 For a tree $T\in\T_M$, let $\e(T)$ and $\oo(T)$ be the number of {\em even-degree nodes} and the number of {\em odd-degree nodes  in  odd levels} in $T$, respectively. We are interested in the symmetry of the parameter $\e(T)$ compared with $\o(T)$ in Theorem~\ref{weak-o-oe-ee}.
Note that 
$$
\e(T)=\sum\limits_{q\text{ even}}\deg_q(T)\quad\text{and}\quad \oo(T)=\sum\limits_{q\text{ odd}}\od_{q}(T).
$$
 It then follows from Theorem~\ref{bijection-Weakly} that 
 \begin{equation}\label{Para2}
 \sum_{T\in\mathcal{T}_M}x^{\e(T)}=\sum_{T\in\mathcal{T}_M}x^{\oo(T)+\el(T)}.
 \end{equation}
In fact, replacing  $z$ with $xyz$ in identity~\eqref{Weak:thm} and  noticing  $\o(T)+\ee(T)=\oo(T)+\el(T)$ and $\oe(T)+\ee(T)=\e(T)$, we immediately obtain the following interesting corollary.

 \begin{corollary}\label{weak-e-oo} Let $M=\{1^{p_1},2^{p_2},\ldots,n^{p_n}\}$. Then
 $$
 \sum_{T\in\mathcal{T}_M}x^{\e(T)}y^{\oo(T)+\el(T)}z^{\ee(T)}=\sum\limits_{T\in\mathcal{T}_M}x^{\oo(T)+\el(T)}y^{\e(T)}z^{\ee(T)}.
 $$
 \end{corollary}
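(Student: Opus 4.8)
The plan is to derive this identity as a formal consequence of the symmetry~\eqref{Weak:thm} proved in Theorem~\ref{weak-o-oe-ee}, exploiting that the two exponents appearing here are linear combinations of the statistics $\ee,\oe,\o$. The first step is to record the two relations
\[
\e(T)=\oe(T)+\ee(T)\qquad\text{and}\qquad(\oo+\el)(T)=\o(T)+\ee(T).
\]
To justify them I would classify every node of $T$ by the joint parity of its degree and of its level, writing $a,b,c,d$ for the numbers of nodes that are respectively even-degree/even-level, even-degree/odd-level, odd-degree/even-level, odd-degree/odd-level. Then $\ee=a$, $\oe=b$, $\e=a+b$, $\o=c+d$, $\oo=d$ and $\el=a+c$, so that $\e=\oe+\ee$ is immediate and both sides of the second relation equal $a+c+d$.

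With these relations in hand, recall that~\eqref{Weak:thm} is exactly the assertion that $S_M(x,y,z)=\sum_{T\in\T_M}x^{\ee(T)}y^{\oe(T)}z^{\o(T)}$ is symmetric in its last two arguments, i.e.\ $S_M(x,y,z)=S_M(x,z,y)$. I would then substitute the variable marking $\ee(T)$, namely $x$, by the monomial $xyz$; collecting exponents with the two relations gives
\[
S_M(xyz,y,z)=\sum_{T\in\T_M}(xyz)^{\ee(T)}y^{\oe(T)}z^{\o(T)}=\sum_{T\in\T_M}x^{\ee(T)}y^{\e(T)}z^{(\oo+\el)(T)}.
\]
Applying the $y\leftrightarrow z$ symmetry and collecting exponents on the other side yields $S_M(xyz,z,y)=\sum_{T\in\T_M}x^{\ee(T)}y^{(\oo+\el)(T)}z^{\e(T)}$, so that
\[
\sum_{T\in\T_M}x^{\ee(T)}y^{\e(T)}z^{(\oo+\el)(T)}=\sum_{T\in\T_M}x^{\ee(T)}y^{(\oo+\el)(T)}z^{\e(T)};
\]
a harmless renaming of the formal variables $x,y,z$ puts this into the stated form. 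This is the substitution meant in the paragraph preceding the corollary.

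As a cross-check, and because it is even shorter, I would also note the purely bijective argument using the involution $\widetilde{(\,)}$ of Theorem~\ref{weak-o-oe-ee}. Since $(\o,\oe,\ee)(T)=(\oe,\o,\ee)(\widetilde T)$, the relations above give $\e(\widetilde T)=\oe(\widetilde T)+\ee(\widetilde T)=\o(T)+\ee(T)=(\oo+\el)(T)$ and symmetrically $(\oo+\el)(\widetilde T)=\e(T)$, while $\ee$ is preserved; hence $(\e,\oo+\el,\ee)(T)=(\oo+\el,\e,\ee)(\widetilde T)$, and summing over the bijection $\widetilde{(\,)}$ gives the claim. I do not anticipate a genuine obstacle: the corollary is formal once~\eqref{Weak:thm} is available. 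The only points needing care are the verification of the two parity relations via the four-fold node classification, and choosing the substitution on the correct variable---substituting $xyz$ for the variable that marks $\ee(T)$ (rather than one of the others) is precisely what produces the common exponent $\ee$ on both sides.
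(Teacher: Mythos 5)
Correct, and essentially the paper's own proof: both derive the corollary by substituting a monomial into~\eqref{Weak:thm} and invoking the two relations $\oe(T)+\ee(T)=\e(T)$ and $\o(T)+\ee(T)=\oo(T)+\el(T)$ (your bijective cross-check via $\widetilde{(\,)}$ is just the combinatorial shadow of the same computation). The only discrepancy is that the paper says to replace $z$ by $xyz$, whereas your replacement of $x$ (the variable marking $\ee$) by $xyz$ is the substitution that directly produces the stated identity; your reading corrects what appears to be a slip in the paper's phrasing.
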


For a tree $T\in\T_M$, we are also interested in some variations of the three statistics `$\o$', `$`\oe$' and `$\ee$' where the root of $T$ is not taken into account.  
 \begin{itemize}
 \item  The number of odd-degree nodes other than the root in $T$, denoted $\o^*(T)$.
\item The number of even-degree nodes other than the root in odd (resp.~even) level of $T$, denoted by $\oe^*(T)$ (resp.~$\ee^*(T)$). Note that $\oe^*(T)=\oe(T)$. 
 \end{itemize}
 Surprisingly, we still have the following refined symmetry for `$\o$', `$`\oe$' and `$\ee$', as a variation of~\eqref{Weak:thm}. 
 
\begin{corollary}\label{weak-nonzero} Let $M=\{1^{p_1},2^{p_2},\ldots,n^{p_n}\}$. Then
 $$\sum_{T\in\mathcal{T}_M}x^{\o^*(T)}y^{\oe^*(T)}z^{\ee^*(T)}=\sum_{T\in\mathcal{T}_M}x^{\ee^*(T)}y^{\oe^*(T)}z^{\o^*(T)}.$$
\end{corollary}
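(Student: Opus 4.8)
The plan is to deduce Corollary~\ref{weak-nonzero} from the symmetry~\eqref{Weak:thm} by peeling off the root and applying the involution $\widetilde{(\,)}$ separately to each of the root's subtrees. Decompose $T\in\T_M$ into its root $0$, of degree $r$, together with the root-subtrees $T_1,\dots,T_r$ read from left to right, where $T_i$ is the subtree hanging at the $i$-th child of $0$. Every child of the root lies at level $1$, so a node at level $\ell$ of $T_i$ lies at level $\ell+1$ of $T$; hence its level-parity is reversed when $T_i$ is plugged into $T$, while its degree is left unchanged. Since the root is the unique node at level $0$ and is exactly the node omitted by the starred statistics, I would first record the identities
\[
\o^*(T)=\sum_{i=1}^{r}\o(T_i),\qquad \oe^*(T)=\sum_{i=1}^{r}\ee(T_i),\qquad \ee^*(T)=\sum_{i=1}^{r}\oe(T_i),
\]
the interchange of $\oe$ and $\ee$ on the right being precisely the level-parity flip (note also $\oe^*(T)=\oe(T)$, as the root sits at an even level).

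These identities turn the left-hand side of the corollary into a sum over subtrees,
\[
\sum_{T\in\T_M}x^{\o^*(T)}y^{\oe^*(T)}z^{\ee^*(T)}=\sum_{T\in\T_M}\prod_{i=1}^{r}x^{\o(T_i)}y^{\ee(T_i)}z^{\oe(T_i)},
\]
in which the exchange $x\leftrightarrow z$ amounts, factor by factor, to exchanging $\o(T_i)$ with $\oe(T_i)$. Thus the claimed symmetry in $\o^*$ and $\ee^*$ is equivalent to performing the $\o\leftrightarrow\oe$ symmetry simultaneously inside every root-subtree, and this is exactly the content of Theorem~\ref{weak-o-oe-ee}, since the statistics $\o,\oe,\ee$ depend only on the degrees and levels of nodes and not on their labels.

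To realize this as an explicit involution I would define $\Psi:\T_M\to\T_M$ that fixes the root $0$ and its degree $r$ and replaces each root-subtree $T_i$ by $\widetilde{T_i}$, where $\widetilde{T_i}$ is obtained by relabeling the root of $T_i$ (its minimum label $a_i$) to $0$, applying $\widetilde{(\,)}$, and relabeling the root back to $a_i$. Because $\widetilde{(\,)}$ preserves both the underlying multiset of a tree and the label of its root, the map $\Psi$ preserves the multiset of each subtree and the left-to-right sequence $a_1\le\cdots\le a_r$ of subtree-root labels, hence the weakly increasing property of the whole tree, and it is an involution because $\widetilde{(\,)}$ is. Combining the identity $(\o,\oe,\ee)(\widetilde{T_i})=(\oe,\o,\ee)(T_i)$ from Theorem~\ref{weak-o-oe-ee} with the three identities of the first paragraph then yields $(\o^*,\oe^*,\ee^*)(T)=(\ee^*,\oe^*,\o^*)(\Psi(T))$, which is the desired symmetry. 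The main obstacle I anticipate is precisely the bookkeeping in this last step: one must check that relabeling a subtree-root to $0$ yields a genuine weakly increasing tree (using that the root of $T_i$ carries the minimum label of $T_i$, forced by condition~(ii) of the definition), that $\widetilde{(\,)}$ really fixes the root label so the relabeling can be undone, and that reassembling the modified subtrees under the root respects the weakly increasing condition. None of these points is deep, but each must be verified to guarantee that $\Psi$ is well defined on all of $\T_M$.
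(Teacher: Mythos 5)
Your proposal is correct and is essentially the paper's own proof: your map $\Psi$ is exactly the involution $\psi$ the paper constructs, which attaches $\widetilde{T}_{u_1},\ldots,\widetilde{T}_{u_s}$ to a new root $0$ and combines Theorem~\ref{weak-o-oe-ee} with the level-parity flip caused by re-rooting. Your additional bookkeeping (relabeling each subtree root to $0$ and back, and checking that reassembly preserves the weakly increasing conditions) only makes explicit what the paper leaves implicit.
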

 
\begin{figure}
\includegraphics[width=8.8cm,height=3cm]{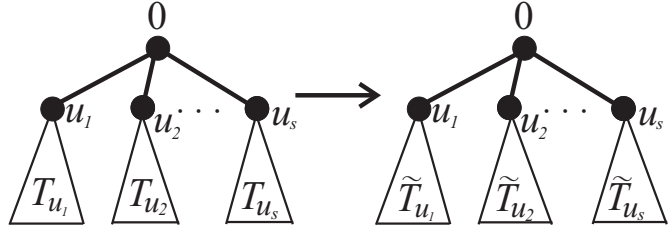}\caption{The construction of $\psi(T)$.}\label{C6}
\end{figure}
 
 \begin{proof}
 For any weakly increasing tree $T\in\mathcal{T}_M$, suppose that $u_1,\ldots,u_s$ are all children of the root $0$ of $T$.  We  construct a weakly increasing tree, denoted  $\psi(T)$, by 
   attaching $\widetilde{T}_{u_1},\ldots, \widetilde{T}_{u_s}$ as  subtrees of a new root $0$ from left to right.  
 See Fig.~\ref{C6} for the graphical description of $\psi(T)$.
By  Theorem~\ref{weak-o-oe-ee}, the mapping $\psi: \T_M\rightarrow\T_M$ is an involution satisfying  
$$
\o^*(T)=\ee^*(\psi(T)),\quad \oe^*(T)=\oe^*(\psi(T))\quad\text{and}\quad \ee^*(T)=\o^*(\psi(T)),
$$ 
which completes the proof.
 \end{proof}
 
 \begin{remark}
 Corollaries~\ref{weak-e-oo} and~\ref{weak-nonzero} are new even for plane trees or increasing trees. 
 \end{remark}
 
 The {\em Euler numbers} $\{E_n\}_{n\geq0}$  can be defined as  the coefficients of the Taylor expansion 
 $$
 \sec(x)+\tan(x)=\sum_{n\geq0} E_n\frac{x^n}{n!}.
 $$
 It was Andr\'e~\cite{Andre} in $1879$ that first discovered the interpretation of $E_n$ as the number of {\em alternating (down-up) permutations} of length $n$. Since then, the combinatorics of the Euler numbers has been investigated extensively; see the work~\cite{Foata-Han} of Foata and Han and the references therein.  Kuznetsov, Pak and Postnikov~\cite[Theorem~3]{Kuznetsov1994} showed that increasing trees $T\in\T_n$ with $\o^*(T)=0$ is enumerated by $E_n$. Combining this with   the increasing tree case of Corollary~\ref{weak-nonzero} results in  the following new interpretation of $E_n$. 
 
 \begin{corollary}
 The number of  increasing trees $T\in\T_n$ with $\ee^*(T)=0$ is the $n$th Euler number $E_n$. 
 \end{corollary}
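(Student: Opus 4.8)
The plan is to read off the desired count directly from the refined symmetry of Corollary~\ref{weak-nonzero}, specialized to the increasing tree case $M=[n]$, and to combine it with the Kuznetsov--Pak--Postnikov enumeration recalled just before the statement. The key observation is that the symmetry in Corollary~\ref{weak-nonzero} interchanges the statistics $\o^*$ and $\ee^*$ while fixing $\oe^*$, so these two statistics are equidistributed over $\T_n$; in particular the trees annihilated by $\o^*$ and those annihilated by $\ee^*$ are equinumerous.

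First I would specialize Corollary~\ref{weak-nonzero} to $M=[n]$, obtaining
$$
\sum_{T\in\T_n}x^{\o^*(T)}y^{\oe^*(T)}z^{\ee^*(T)}=\sum_{T\in\T_n}x^{\ee^*(T)}y^{\oe^*(T)}z^{\o^*(T)}.
$$
Setting $y=z=1$ in this polynomial identity collapses it to $\sum_{T\in\T_n}x^{\o^*(T)}=\sum_{T\in\T_n}x^{\ee^*(T)}$, which says precisely that $\o^*$ and $\ee^*$ share the same distribution on $\T_n$. Extracting the constant term in $x$ then yields
$$
\#\{T\in\T_n:\o^*(T)=0\}=\#\{T\in\T_n:\ee^*(T)=0\}.
$$

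To finish, I would invoke the result of Kuznetsov, Pak and Postnikov~\cite[Theorem~3]{Kuznetsov1994}, which states that the left-hand count equals the Euler number $E_n$. Hence the right-hand count is $E_n$ as well, which is exactly the claim.

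There is no serious obstacle here: the statement is a one-line consequence of two already-available facts. The only point requiring care is to confirm that the mapping underlying Corollary~\ref{weak-nonzero} (namely $\psi$, built from the involution $\widetilde{(\,)}$) genuinely swaps $\o^*$ and $\ee^*$ rather than some other pair of statistics. This is transparent from the displayed relations $\o^*(T)=\ee^*(\psi(T))$ and $\ee^*(T)=\o^*(\psi(T))$ established in the proof of that corollary, so no additional work is needed and the argument closes immediately.
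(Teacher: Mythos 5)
Your proposal is correct and follows the paper's own argument exactly: the paper likewise combines the increasing-tree case of Corollary~\ref{weak-nonzero} (which gives the equidistribution of $\o^*$ and $\ee^*$ over $\T_n$) with Kuznetsov--Pak--Postnikov's enumeration of trees with $\o^*(T)=0$ by $E_n$. No gaps; the extra care you take in checking that $\psi$ swaps $\o^*$ and $\ee^*$ is exactly what the paper's proof of that corollary establishes.
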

 
  For $T\in\T_M$ and $v$ a node of $T$, the {\em full-degree} of $v$ is the number of nodes in $T$ to which $v$ is adjacent.  Thus, the full-degree of $v$ equals the degree of $v$ plus one, unless $v$ is the root. Deutsch and Shapiro~\cite[p.~259]{Deutsch2001} asked for a direct two-to-one
correspondence for proving combinatorially the following known property. 

\begin{proposition}[\text{See~\cite[p.~259]{Deutsch2001}}]\label{pro:twice}
Over all plane trees with n edges, the total number of nodes with odd full-degree is twice the total number of nodes with odd degree. 
\end{proposition}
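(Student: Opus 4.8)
The plan is to reduce the statement to a single enumerative identity by analysing full-degrees locally, and then to settle that identity with the plane-tree generating function. First I would record that the full-degree of a node equals its degree plus one, except at the root where the two agree; hence a node has odd full-degree exactly when it is a non-root node of even degree, or the root of odd degree. Writing $\e(T)$ for the number of even-degree nodes and $\epsilon(T)\in\{+1,-1\}$ for the quantity equal to $+1$ or $-1$ according as the root of $T$ has odd or even degree, this gives the clean formula
\[
\#\{v\in T:\ v\ \text{has odd full-degree}\}=\e(T)+\epsilon(T).
\]
Summing over $T\in\mathcal{P}_n$ and using $\e(T)+\o(T)=n+1$ (every node is counted once), the target $\sum_T\#\{\text{odd full-degree}\}=2\sum_T\o(T)$ becomes equivalent to the identity
\[
(n+1)\,|\mathcal{P}_n|+\sum_{T\in\mathcal{P}_n}\epsilon(T)=3\sum_{T\in\mathcal{P}_n}\o(T).
\]

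All the content has thereby been concentrated in the root-parity term $\sum_T\epsilon(T)$, the excess of odd-rooted over even-rooted trees. Everything else is ``bulk'' already governed by the paper's machinery: the plane-tree symmetry~\eqref{sym:plane} gives $\sum_T\o(T)=\sum_T\oe(T)$, so the displayed identity measures precisely the amount by which the root breaks the degree/level symmetry. This is also why a purely local argument is hopeless: the per-tree quantity $\e(T)+\epsilon(T)-2\o(T)$ is generally nonzero, so the identity holds only after summation, and any combinatorial proof must mix different trees of $\mathcal{P}_n$. I expect the principal difficulty to be exactly the explicit size-preserving \emph{two-to-one correspondence} requested by Deutsch--Shapiro: a $2$-to-$1$ map on pointed trees of $\mathcal{P}_n$ that transparently absorbs this root term.

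To complete the proof rigorously I would pass to generating functions built on $T(x)=\sum_{n\ge0}|\mathcal{P}_n|x^n=1+xT(x)^2$. Let $F(x,u)=(1+uxF)/(1-x^2F^2)$ mark odd-degree nodes by $u$; because the root is exceptional for full-degree, introduce in parallel $H(x,v)=(v+xH)/(1-x^2H^2)$, in which $v$ marks a subtree-root (always a non-root node, hence of odd full-degree exactly when of even degree), together with $G(x,v)=(1+vxH)/(1-x^2H^2)$ for the whole tree. At $u=v=1$ one has $F=H=G=T$, and implicit differentiation yields $\partial_uF=xT/D$ and $\partial_vG=(1-x^2T^2)^{-1}\bigl(xT+(x+2x^2T^2)/D\bigr)$ with $D=1-x-3x^2T^2$. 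The required equality $\partial_vG=2\,\partial_uF$, after clearing denominators and repeatedly substituting $xT^2=T-1$, collapses to a polynomial identity in $T$ that vanishes identically; extracting $[x^n]$ then recovers the displayed identity and hence the original count. The generating-function route thus sidesteps the root term cleanly, while the combinatorially satisfying two-to-one correspondence remains the harder, and more illuminating, goal.
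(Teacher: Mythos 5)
Your proof is correct, but it goes by a genuinely different route than the paper. You encode the count of odd full-degree nodes and odd-degree nodes in two generating functions $G$ and $F$ built from the Catalan equation $T=1+xT^2$, and verify $\partial_v G|_{v=1}=2\,\partial_u F|_{u=1}$ by implicit differentiation; I checked your formulas $\partial_u F=xT/D$ and $\partial_v G=(1-x^2T^2)^{-1}\bigl(xT+(x+2x^2T^2)/D\bigr)$ with $D=1-x-3x^2T^2$, and the required identity does collapse to $1+2xT^2-xT-x^2T^3=T$ under $xT^2=T-1$, which holds. The paper instead proves the statement combinatorially (and in the generality of weakly increasing trees on any multiset, Proposition~\ref{wit:twice}): it splits
$\fo(T)=\oe(T)+\ee^*(T)+\chi(\text{root degree of $T$ is odd})$
and matches each piece with $\o$ via the two involutions already constructed, $\widetilde{(\,)}$ (which gives $\o(T)=\oe(\widetilde{T})$) and $\psi$ (which gives $\o(T)=\ee^*(\psi(T))+\chi(\text{root degree of $\psi(T)$ is odd})$, since $\psi$ preserves the root degree); together these constitute exactly the two-to-one correspondence Deutsch and Shapiro asked for. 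What each approach buys: yours is short, self-contained, and needs none of the paper's machinery, but it is confined to plane trees, produces no bijection, and does not obviously refine; the paper's argument is bijective, works uniformly for every multiset $M$, and admits the degree-by-degree refinement of Theorem~\ref{wit:twice2} (full-degree $2d+1$ versus degree $2d+1$), which a single differentiation cannot see. Your reduction to $(n+1)|\mathcal{P}_n|+\sum_T\epsilon(T)=3\sum_T\o(T)$ is fine but ends up unused --- the generating-function identity you prove is the original statement directly --- so you could omit it; also note your remark that any combinatorial proof ``must mix different trees'' is consistent with, and indeed realized by, the paper's involutions.
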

 
 Response to the problem raised by Deutsch and Shapiro, Eu, Liu and Yeh~\cite{Eu2004} constructed such a  two-to-one correspondence. As an application of the two involutions $\widetilde{(\,)}$ and $\psi$, we have been able to obtain a new two-to-one correspondence proof of Proposition~\ref{pro:twice} which admits extension to weakly increasing trees perfectly.  
 
 \begin{proposition}\label{wit:twice}
Fix a multiset $M$. Over all weakly increasing trees in $\T_M$, the total number of nodes with odd full-degree is twice the total number of nodes with odd degree. 
\end{proposition}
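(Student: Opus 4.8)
The plan is to realize the identity as an explicit two-to-one map, exactly in the spirit of the Deutsch--Shapiro request, by combining the two involutions $\widetilde{(\,)}$ and $\psi$ already constructed. Write
$$
\mathcal{F}=\{(T,v): T\in\T_M,\ v\text{ a node of }T\text{ with odd full-degree}\},\quad \mathcal{O}=\{(T,v): T\in\T_M,\ v\text{ a node of }T\text{ with odd degree}\};
$$
the assertion is that there is a map $\Phi\colon\mathcal{F}\to\mathcal{O}$ every fibre of which has exactly two elements. The starting observation is purely local: a non-root node $v$ has full-degree $\deg(v)+1$, hence odd full-degree precisely when $\deg(v)$ is even, while the root has full-degree $\deg(v)$, hence odd full-degree precisely when $\deg(v)$ is odd. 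Splitting the non-root even-degree nodes by the parity of their level, this partitions $\mathcal{F}$ into three blocks recorded by $\ee^*(T)$, by $\oe^*(T)=\oe(T)$, and by $\chi(\text{root has odd degree})=\o(T)-\o^*(T)$, while $\mathcal{O}$ splits into the two blocks recorded by $\o^*(T)$ and by $\o(T)-\o^*(T)$.

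First I would record the cardinality skeleton, which already forces the factor $2$. The decomposition above gives, for each $T$,
$$
\#\{v:\ v\text{ has odd full-degree}\}=\ee^*(T)+\oe(T)+\bigl(\o(T)-\o^*(T)\bigr).
$$
Summing over $T\in\T_M$ and invoking the two symmetries proved earlier closes the count: Corollary~\ref{weak-nonzero} (the involution $\psi$) yields $\sum_{T}\ee^*(T)=\sum_{T}\o^*(T)$, so the first and last terms cancel in the total, and the symmetry~\eqref{Weak:thm} (the involution $\widetilde{(\,)}$) yields $\sum_{T}\oe(T)=\sum_{T}\o(T)$. Hence $\sum_T\#\{v:\text{odd full-degree}\}=\sum_T\oe(T)+\sum_T\o(T)=2\sum_T\o(T)$, the desired identity.

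To upgrade this to a genuine two-to-one correspondence I would let $\Phi$ act blockwise. On the block of $\mathcal{F}$ counted by $\ee^*$ (non-root, even degree, even level) send $(T,v)$ to $(\psi(T),v')$, where $v'$ is the node of $\psi(T)$ matched to $v$ under $\psi$; since $\psi$ exchanges the roles of $\ee^*$ and $\o^*$ and fixes $\oe^*$, this is a bijection onto the non-root odd-degree pairs. On the block counted by $\oe$ (non-root, even degree, odd level) send $(T,v)$ to $(\widetilde{T},\widetilde v)$, where $\widetilde v$ is the node of $\widetilde{T}$ matched to $v$ under $\widetilde{(\,)}$; because $\widetilde{(\,)}$ carries odd-level even-degree nodes to odd-degree nodes and is an involution on $\T_M$, this is a bijection onto all of $\mathcal{O}$. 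On the remaining block (the root, when it has odd degree) let $\Phi$ be the identity into the root block of $\mathcal{O}$. Tracking fibres, every non-root odd-degree pair then receives exactly one preimage through $\psi$ and one through $\widetilde{(\,)}$, while every odd-degree root pair receives one preimage through the identity and one through $\widetilde{(\,)}$; thus each fibre has size two.

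The hard part will be that Corollary~\ref{weak-nonzero} and~\eqref{Weak:thm} are stated as equalities of distributions, whereas $\Phi$ requires the involutions to induce honest node-level bijections: one must check that $\psi$ carries the even-level even-degree non-root nodes of $T$ bijectively onto the odd-degree non-root nodes of $\psi(T)$, and that $\widetilde{(\,)}$ carries the odd-level even-degree nodes of $T$ bijectively onto the odd-degree nodes of $\widetilde{T}$, node by node. Both facts should fall out of unwinding the recursive definitions in Section~\ref{com:sym}: the inductive sum-decompositions used there to prove $\o(T)=\oe(\widetilde{T})$ and $\ee^*(T)=\o^*(\psi(T))$ are precisely the bookkeeping that pins down which node maps to which, so a careful induction on the size of $M$ promotes the counting skeleton to the two-to-one map $\Phi$ and completes the proof. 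Specializing the same argument to $M=\{1^n\}$ recovers Proposition~\ref{pro:twice} for plane trees.
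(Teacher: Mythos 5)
Your proposal is correct and follows essentially the same route as the paper: the identical decomposition $\fo(T)=\oe(T)+\ee^*(T)+\chi(\text{root degree odd})$, with the $\oe$-block matched to odd-degree nodes via the involution $\widetilde{(\,)}$ (i.e.\ the symmetry~\eqref{Weak:thm}) and the $\ee^*$/root-block matched via the involution $\psi$ (i.e.\ Corollary~\ref{weak-nonzero}), yielding the two-to-one correspondence. Your closing worry about node-level matchings is not actually needed for the counting statement itself, and the paper likewise leaves the within-tree matching implicit.
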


\begin{proof}
Let $\fo(T)$ denote the number of nodes with odd full-degree in $T$. Notice that 
$$
\fo(T)=\blue{\oe(T)}+\magenta{\ee^*(T)+\chi(\text{Root degree of $T$ is odd})}.
$$
On the one hand, the involution  $\widetilde{(\,)}:\T_M\rightarrow\T_M$ satisfies 
$$
\o(T)=\blue{\oe(\widetilde{T})}. 
$$
On the other hand, the involution  $\psi:\T_M\rightarrow\T_M$ satisfies 
\begin{align*}
\o(T)&=\o^*(T)+\chi(\text{Root degree of $T$ is odd})\\
&=\magenta{\ee^*(\psi(T))+\chi(\text{Root degree of $\psi(T)$ is odd})}. 
\end{align*}
Combining the above observations, the two involutions $\widetilde{(\,)}$ and $\psi$ severe as a two-to-one correspondence proof of the desired property for all weakly increasing trees in $\T_M$. 
\end{proof}
\begin{remark}
We could not extend Eu, Liu and Yeh's two-to-one correspondence~\cite{Eu2004} from plane trees to weakly increasing trees to prove Proposition~\ref{wit:twice}. To the best of our knowledge, Proposition~\ref{wit:twice} is new even for increasing trees. 
\end{remark} 

We can refine Proposition~\ref{wit:twice} by constructing another  two-to-one correspondence in the same spirit as that in Proposition~\ref{wit:twice}, but with the bijection $\widehat{(\,)}$ replacing the involution $\widetilde{(\,)}$. 

\begin{theorem}\label{wit:twice2}
Fix a multiset $M$ and a nonnegative integer $d$. Over all weakly increasing trees in $\T_M$, the total number of nodes with  full-degree $2d+1$ is twice the total number of nodes with  degree $2d+1$. 
\end{theorem}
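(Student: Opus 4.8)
The plan is to reprove the identity exactly as a two-to-one correspondence in the spirit of Proposition~\ref{wit:twice}, but with the bijection $\widehat{(\,)}$ and a $\psi$-type reassembly built from it replacing the roles of $\widetilde{(\,)}$ and $\psi$. Writing $\mathsf{fd}_{2d+1}(T)$ for the number of nodes of $T$ with full-degree $2d+1$ and $\ed_q(T)$ for the number of degree-$q$ nodes in even levels, the first step is the local identity
\[
\mathsf{fd}_{2d+1}(T)=\od_{2d}(T)+\big(\ed_{2d}(T)-\chi(\text{root degree is }2d)+\chi(\text{root degree is }2d+1)\big),
\]
which records that a node has full-degree $2d+1$ exactly when it is a non-root node of degree $2d$ or the root of degree $2d+1$, split according to the parity of its level (recall that every odd-level node is a non-root node). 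I write $\mathrm{I}(T)=\od_{2d}(T)$ for the first summand and $\mathrm{II}(T)$ for the parenthesized second summand. The goal is then to show that each of $\sum_{T\in\T_M}\mathrm{I}(T)$ and $\sum_{T\in\T_M}\mathrm{II}(T)$ equals $\sum_{T\in\T_M}\deg_{2d+1}(T)$, so that the total is twice $\sum_{T}\deg_{2d+1}(T)$, as desired.

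For the summand $\mathrm{I}$ I would invoke Theorem~\ref{bijection-Weakly} directly: taking $q=2d+1\geq1$ gives $\deg_{2d+1}(T)=\od_{2d}(\widehat T)$ for every $T\in\T_M$, and summing over the bijection $\widehat{(\,)}:\T_M\to\T_M$ yields $\sum_{T}\mathrm{I}(T)=\sum_{T}\od_{2d}(T)=\sum_{T}\deg_{2d+1}(T)$.

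For the summand $\mathrm{II}$ I would introduce the reassembly $\psi'(T)$, obtained as in $\psi$ but by applying $\widehat{(\,)}$ to each of the subtrees $T_{u_1},\ldots,T_{u_s}$ hanging from the root of $T$ and reattaching the results to a new root $0$ from left to right. Since $\widehat{(\,)}$ preserves the label of the root of the tree it acts on, $\psi'$ is a bijection of $\T_M$ that fixes the root degree. The non-root even-level nodes of $\psi'(T)$ are precisely the odd-subtree-level nodes of the $\widehat{T}_{u_i}$, so the even-level degree-$2d$ count of $\psi'(T)$ (minus its root-degree correction) equals $\sum_i\od_{2d}(\widehat{T}_{u_i})$. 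Applying Theorem~\ref{bijection-Weakly} to each subtree gives $\od_{2d}(\widehat{T}_{u_i})=\deg_{2d+1}(T_{u_i})$, and summing over $i$ converts this into the number of non-root degree-$(2d+1)$ nodes of $T$. Together with the root-degree term, which $\psi'$ leaves unchanged, this should give $\mathrm{II}(\psi'(T))=\deg_{2d+1}(T)$; summing over the bijection $\psi'$ then yields $\sum_{T}\mathrm{II}(T)=\sum_{T}\deg_{2d+1}(T)$, and the theorem follows.

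The main obstacle is the bookkeeping in the $\mathrm{II}$ step. One must confirm that $\psi'$ is a genuinely well-defined bijection of $\T_M$, in particular that $\widehat{(\,)}$ preserves each subtree's root label so that the weak-increasing condition~(iii) at the new root survives the reassembly; and one must track the level-parity shift with care, since a node at subtree-level $\ell$ in $\widehat{T}_{u_i}$ sits at level $\ell+1$ in $\psi'(T)$, so the even levels of $\psi'(T)$ correspond to the odd levels of the subtrees. Finally one must verify that the two root-degree correction terms $\chi(\text{root degree is }2d)$ and $\chi(\text{root degree is }2d+1)$ cancel correctly against the (fixed) root of $\psi'(T)$. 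Once these points are settled, the local identity $\mathsf{fd}_{2d+1}(T)=\mathrm{I}(T)+\mathrm{II}(T)$ together with the two bijections $\widehat{(\,)}$ and $\psi'$ constitutes the required two-to-one correspondence.
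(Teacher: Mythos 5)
Your proposal is correct and is essentially the paper's own proof: your splitting of full-degree-$(2d+1)$ nodes is exactly the paper's decomposition $\od_{2d}(T)+\ed^*_{2d}(T)+\chi(\text{root degree of $T$ is }2d+1)$, and your reassembly $\psi'$ (applying $\widehat{(\,)}$ to each root subtree and regrafting onto a new root) is precisely the bijection $\theta$ the paper constructs. The bookkeeping points you flag — preservation of root labels so the regrafting satisfies condition (iii), the level-parity shift by one, and the cancellation of the root-degree correction — all check out exactly as in the paper's computation $\deg_{2d+1}(T)=\chi(s=2d+1)+\sum_{i}\od_{2d}(\widehat{T}_{u_i})=\chi(\text{root degree of $\theta(T)$ is }2d+1)+\ed^*_{2d}(\theta(T))$.
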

\begin{figure}[h!]
\includegraphics[width=8.8cm,height=3cm]{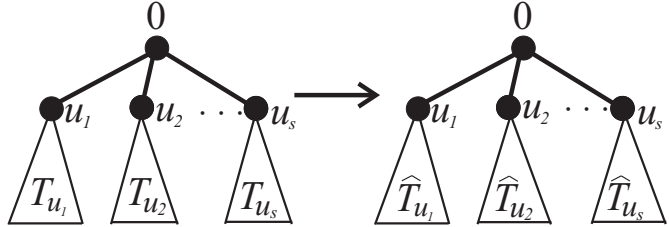}\caption{The construction of $\theta(T)$.}\label{2d1}
\end{figure}
\begin{proof}
Recall that $\od_q(T)$ is the number of nodes of degree $q$ in odd levels of a tree $T$. For our purpose, let  $\ed^*_q(T)$ be the number of nodes, other than the root, of degree $q$ in even levels of $T$. Then, the number of nodes in $T$ with full-degree $2d+1$ is
$$
\blue{\od_{2d}(T)}+\magenta{\ed^*_{2d}(T)+\chi(\text{Root degree of $T$ is $2d+1$})}. 
$$
On the one hand, the bijection $\widehat{(\,)}:\T_M\rightarrow\T_M$ defined in the introduction satisfies 
$$
\deg_{2d+1}(T)=\blue{\od_{2d}(\widehat{T})}. 
$$
On the other hand, we aim to define a bijection  $\theta:\T_M\rightarrow\T_M$ satisfies 
$$
\deg_{2d+1}(T)=\magenta{\ed^*_{2d}(\theta(T))+\chi(\text{Root degree of $\theta(T)$ is $2d+1$})},
$$
which together with  $\widehat{(\,)}$ would  severe as a two-to-one correspondence proof of the desired result. 

It remains to construct the required  bijection $\theta$. For a tree $T\in\mathcal{T}_M$, suppose that $u_1,\ldots,u_s$ are all children of the root $0$ of $T$.  We  construct the weakly increasing tree $\theta(T)$, by 
   attaching $\widehat{T}_{u_1},\ldots, \widehat{T}_{u_s}$ as  subtrees of a new root $0$ from left to right.  
 See Fig.~\ref{2d1} for the graphical description of $\theta(T)$.
By  Theorem~\ref{bijection-Weakly}, the mapping $\theta: \T_M\rightarrow\T_M$ is a bijection satisfying  
\begin{align*}
\deg_{2d+1}(T)&=\chi(s=2d+1)+\sum_{i=1}^s\deg_{2d+1}(T_{u_i})\\
&=\chi(s=2d+1)+\sum_{i=1}^s\od_{2d}(\widehat{T}_{u_i})\\
&=\magenta{\chi(\text{Root degree of $\theta(T)$ is $2d+1$})+\ed^*_{2d}(\theta(T))},
\end{align*} 
as desired. 
\end{proof}

 \section{Algebraic aspect of the symmetry~\eqref{Weak:thm}}
\label{alg:sym}
 In this section, we study the algebraic aspect of the symmetry~\eqref{Weak:thm}. We will show Theorem~\ref{int:schett} by employing Chen's context-free grammar. As an application, a conjecture posed at the end of~\cite{Ma2019} is solved.    In addition, we manage to extend a regular generating function proof of~\eqref{Weak:thm} for plane trees to weakly increasing trees. 
 
 \subsection{Increasing trees and Jacobi elliptic functions}The context-free grammar  was introduced by  Chen in~\cite{Chen1993} and has been found useful in studying various combinatorial structures~\cite{Chen1993,Chen2017,Dumont1996,Ma2019}, including permutations, increasing trees, labeled rooted trees and set partitions.
 
  Let $V=\{x,y,z,\ldots\}$ be a set of commutative variables. A {\em context-free grammar} $G$ over $V$ is a set of substitution rules that replace a variable in $V$ by a Laurent polynomial with variables in $V$. The formal derivative $D$ associated with a context-free grammar $G$  is defined by $D(x)=G(x)$ for any $x\in V$ and obeys the relations:
\begin{align*}
D(u+v)=D(u)+D(v)\quad\text{and}\quad D(uv)=D(u)v+uD(v),
\end{align*}
where $u$ and $v$ are two Laurent polynomials of variables in $V$. 
For example, if $V=\{x,y\}$ and 
\begin{equation}\label{gra:euler}
G=\{x\rightarrow xy, y\rightarrow xy\},
\end{equation}
 then $D(x)=xy$, $D^2(x)=xy(x+y)$ and $D^3(x)=D(xy)(x+y)+D(x+y)xy=x^3y+4x^2y^2+xy^3$. This is the grammar introduced by Dumont~\cite{Dumont1996} to generate the {\em bivariate Eulerian polynomials}.

 \begin{theorem}\label{thm:schett}
 Let $D$ be the formal derivative associated with the grammar
\begin{equation}\label{gra:schett}
G=\{x\rightarrow yz, y\rightarrow xz, z\rightarrow xy\}.
\end{equation}
Then 
\begin{equation}\label{tree:schett}
D^n(x)=\sum_{T\in\mathcal{T}_n}x^{\ee(T)}y^{\oe(T)}z^{\o(T)}.
\end{equation}
Consequently, Theorem~\ref{int:schett} is true. 
 \end{theorem}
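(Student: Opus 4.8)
The plan is to prove the identity \eqref{tree:schett} by induction on $n$, interpreting the action of the derivation $D$ combinatorially as the insertion of the largest label into an increasing tree. Recall that $\T_n$ is the set of increasing trees on $[n]$, and abbreviate the weight of a tree by $w(T)=x^{\ee(T)}y^{\oe(T)}z^{\o(T)}$. For the base case $n=0$, the only tree in $\T_0$ is the single root $0$, which is a leaf (even degree) at the even level $0$, so $w(T)=x=D^0(x)$. The inductive engine I would use is the standard bijection between $\T_n$ and the set of pairs $(T',v)$ with $T'\in\T_{n-1}$ and $v$ a node of $T'$: since $n$ is the largest label it is necessarily a leaf of any $T\in\T_n$, so deleting it returns a tree $T'\in\T_{n-1}$ together with the former parent $v$ of $n$, and conversely attaching $n$ as the (necessarily rightmost) child of any node $v$ of $T'$ yields a unique tree of $\T_n$.

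The heart of the argument is to verify that inserting $n$ at a node $v$ transforms $w(T')$ exactly according to the grammar rule applied to the variable currently carried by $v$. When $n$ is attached to $v$, only two contributions to the weight change: the fresh leaf $n$ contributes one new factor, and the degree of $v$ flips parity while all levels and all other degrees are preserved. I would split into three cases. If $v$ is an even-degree node at an even level (factor $x$), then after insertion $v$ becomes odd-degree (factor $z$) while $n$ is a leaf at an odd level (factor $y$), so the factor $x$ is replaced by $yz$, matching $D(x)=yz$. If $v$ is an even-degree node at an odd level (factor $y$), then $v$ becomes odd-degree (factor $z$) and $n$ is a leaf at an even level (factor $x$), so $y$ is replaced by $xz$, matching $D(y)=xz$. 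Finally, if $v$ is an odd-degree node (factor $z$), then $v$ becomes even-degree, contributing $x$ or $y$ according as its level is even or odd, while $n$ contributes $y$ or $x$ correspondingly; in either case the product of the two new factors is $xy$, matching $D(z)=xy$.

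With this local dictionary in hand, the induction closes by the Leibniz rule. Writing $w(T')=x^{a}y^{b}z^{c}$, the derivation gives $D(w(T'))=a\,x^{a-1}y^{b}z^{c}D(x)+b\,x^{a}y^{b-1}z^{c}D(y)+c\,x^{a}y^{b}z^{c-1}D(z)$, where the $a$ summands of the first type record the insertions of $n$ at the $a$ nodes carrying $x$, and similarly for $y$ and $z$; by the case analysis each summand equals the weight $w(T)$ of the corresponding tree. Summing over all $T'\in\T_{n-1}$ and invoking the bijection, I obtain $D^{n}(x)=D\bigl(\sum_{T'\in\T_{n-1}}w(T')\bigr)=\sum_{T\in\T_{n}}w(T)$, which completes the induction. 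Theorem~\ref{int:schett} then follows immediately, since $S_n(x,y,z)=D^n(x)$ by definition.

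The computations are elementary, so I expect the only real obstacle to be the bookkeeping in the case analysis: one must confirm that an insertion disturbs exactly the two claimed factors (the new leaf and the parent $v$) and nothing else, and that the statistics $\ee,\oe,\o$ are insensitive to the planar ordering of children, so that choosing $n$ as the rightmost child is harmless. The conceptually clean point worth stressing is that an odd-degree node always contributes $xy$ upon insertion, regardless of its level parity, which is precisely why the single rule $D(z)=xy$ suffices and the grammar matches the tree recursion.
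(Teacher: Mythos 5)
Your proposal is correct and follows essentially the same route as the paper: the paper also labels each node $x$, $y$, or $z$ according to the parity of its degree and level, and proves \eqref{tree:schett} by induction on $n$ via inserting the largest label $n$ at a node $v$, with exactly your three-case analysis matching the grammar rules $x\rightarrow yz$, $y\rightarrow xz$, $z\rightarrow xy$. Your explicit treatment of the Leibniz-rule bookkeeping is a slightly more detailed account of what the paper leaves implicit, but the argument is the same.
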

 \begin{figure}
\includegraphics[width=5cm,height=4.5cm]{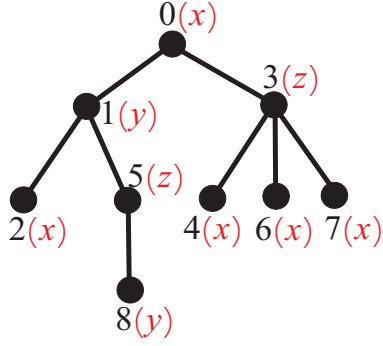}\caption{A labeling of an increasing tree.}\label{gram:tree}
\end{figure}
 \begin{proof}
 For an increasing tree $T\in\T_m$, we introduce a grammatical labeling of all nodes in $T$ by  variables from $V=\{x,y,z\}$ as follows:
\begin{itemize}
\item[(L1)] If $v$ is an even-degree node  on even level, then label $v$ by $x$; 
\item[(L2)] If $v$ is an even-degree node  on odd level, then label $v$ by $y$; 
\item[(L3)] If $v$ is an odd-degree node, then label $v$ by $z$. 
\end{itemize}
 See Fig.~\ref{gram:tree} for a labeling of an increasing tree.  It is clear that the weight $x^{\ee(T)}y^{\oe(T)}z^{\o(T)}$ for the tree $T$ equals the product of all the labeling in $T$. 
 
 We proceed to show~\eqref{tree:schett} by induction on $n$. The identity is obviously true for the initial case $n=0$, as $\T_0$ contains only one tree with a single node. Note that any increasing tree in $\T_n$ can be constructed from a unique tree from $\T_{n-1}$ by attaching the new node $n$. For a tree $T\in\T_{n-1}$ and a node $v$ of $T$, let $T'$ be the tree obtained from $T$ by attaching the new node $n$ to $v$. We have the following three cases according to the weight of $v$:
 \begin{itemize}
 \item If the node $v$ is weighted $x$ in $T$, then the weight of $v$ in $T'$ becomes $z$, while the weight of node $n$ is $y$. This corresponds to applying the rule $x\rightarrow yz$ to the label $x$ associated with $v$. 
 \item If the node $v$ is weighted $y$ in $T$, then the weight of $v$ in $T'$ becomes $z$, while the weight of node $n$ is $x$. This corresponds to applying the rule $y\rightarrow xz$ to the label $y$ associated with $v$. 
 \item If the node $v$ is weighted $z$ in $T$ and on odd level (resp.~even level), then the weight of $v$ in $T'$ becomes $x$ (resp.~$y$), while the weight of node $n$ is $y$ (resp.~$x$). This corresponds to applying the rule $z\rightarrow xy$ to the label $z$ associated with $v$. 
 \end{itemize}
 Hence the action of the formal derivative $D$ on the set of weights of trees in $\T_{n-1}$ gives the set of weights of trees in $\T_n$. This proves ~\eqref{tree:schett} by induction. 
\end{proof} 
 
 \begin{remark}
 As the grammar $G$ in~\eqref{gra:schett} is symmetry in $y$ and $z$, the polynomial $D^n(x)$ is symmetry in  $y$ and $z$. In view of~\eqref{tree:schett}, this provides a context-free grammar approach to Liu's refined symmetry 
 $$
 \sum_{T\in\mathcal{T}_n}x^{\ee(T)}y^{\oe(T)}z^{\o(T)}= \sum_{T\in\mathcal{T}_n}x^{\ee(T)}y^{\o(T)}z^{\oe(T)}.
 $$
 \end{remark}
 
  \begin{theorem}\label{thm:schett2}
 Let $D$ be the formal derivative associated with the grammar
\begin{equation}\label{gra:schett2}
G=\{w\rightarrow wy, x\rightarrow yz, y\rightarrow xz, z\rightarrow xy\}.
\end{equation}
Then 
\begin{equation}\label{tree:schett2}
D^n(w)=w\sum_{T\in\mathcal{T}_n}x^{\ee^*(T)}y^{\oe^*(T)}z^{\o^*(T)}.
\end{equation}
 \end{theorem}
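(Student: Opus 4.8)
The plan is to adapt the grammatical labeling argument used for Theorem~\ref{thm:schett}, treating the root as a distinguished node carrying its own variable. For an increasing tree $T\in\T_n$ I would label every \emph{non-root} node exactly by the rules (L1)--(L3) from the proof of Theorem~\ref{thm:schett} (an even-degree node on an even level gets $x$, an even-degree node on an odd level gets $y$, and an odd-degree node gets $z$), while the root is always labeled by the new variable $w$. Since the three starred statistics $\ee^*,\oe^*,\o^*$ count exactly the non-root nodes of the relevant type, the product of all labels of $T$ equals $w\,x^{\ee^*(T)}y^{\oe^*(T)}z^{\o^*(T)}$. Thus identity~\eqref{tree:schett2} is equivalent to the assertion that $D^n(w)$ is the generating polynomial of these labeled trees, which I would prove by induction on $n$.

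For the base case $n=0$ the only tree is the single root, whose weight is $w$, matching $D^0(w)=w$. For the inductive step I would use, as in Theorem~\ref{thm:schett}, that each $T'\in\T_n$ is obtained from a unique $T\in\T_{n-1}$ by attaching the new node $n$ to a unique node $v$ of $T$, and I would split into two cases according to whether $v$ is the root. If $v$ is a non-root node, then the root retains its label $w$ and the local relabeling of $v$ and $n$ is verbatim that of the proof of Theorem~\ref{thm:schett}, realizing the three rules $x\to yz$, $y\to xz$, $z\to xy$ of the grammar~\eqref{gra:schett2}.

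The genuinely new case---and the only point requiring care---is when $v$ is the root. Here the root keeps its label $w$ (its degree parity is irrelevant, as the root is excluded from all starred statistics), so no $x/y/z$ relabeling of the root occurs. The attached node $n$ sits at level $1$, an odd level, and is a leaf, hence of even degree, so it receives the label $y$ and contributes to $\oe^*$. Attaching to the root therefore multiplies the weight by a single $y$ while preserving the factor $w$, which is exactly the action of the rule $w\to wy$. Combining the two cases shows that applying $D$ to the weight polynomial of $\T_{n-1}$ produces the weight polynomial of $\T_n$, completing the induction and establishing~\eqref{tree:schett2}.
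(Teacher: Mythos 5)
Your proposal is correct and follows essentially the same route as the paper: the paper's proof simply says to relabel the root by $w$ and keep rules (L1)--(L3) for all other nodes, omitting the details you supply. Your case analysis (the verbatim $x\to yz$, $y\to xz$, $z\to xy$ relabelings at non-root nodes, and the new node $n$ landing on level $1$ as a leaf labeled $y$ when attached to the root, realizing $w\to wy$) is exactly the intended verification.
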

 \begin{proof}
 The proof  is almost identical to that of Theorem~\ref{thm:schett} but using a slightly different labeling of trees: the root is labeled $w$ and other nodes are labeled according to (L1), (L2) and (L3) as in the proof of  Theorem~\ref{thm:schett}. The straightforward details of the discussions are omitted due to the similarity. 
 \end{proof}
 
 \begin{remark}
 Since the grammar $G$ in~\eqref{gra:schett2} is symmetry in $x$ and $z$, Theorem~\ref{thm:schett2} provides a context-free grammar approach to the refined symmetry for increasing trees in Corollary~\ref{weak-nonzero}:
 $$\sum_{T\in\mathcal{T}_n}x^{\o^*(T)}y^{\oe^*(T)}z^{\ee^*(T)}=\sum_{T\in\mathcal{T}_n}x^{\ee^*(T)}y^{\oe^*(T)}z^{\o^*(T)}.$$
 \end{remark}
 
 As another  application of Theorems~\ref{thm:schett2} and~\ref{thm:schett}, we can confirm affirmatively  a conjecture posed by Ma--Mansour--Wang--Yeh~\cite{Ma2019}. Actually, the context-free grammar $G$ in~\eqref{gra:schett2} was first considered in~\cite{Ma2019}, as a conjunction of the two grammars in~\eqref{gra:euler} and~\eqref{gra:schett}. Define the integer sequences $t_{n,i,j}$ by 
 \begin{align*}
D^{2m}(w)&=w\sum_{i,j\geq0} t_{2m,i,j}x^{i}y^{2j}z^{2m-2i-j}, \\
D^{2m+1}(w)&=w\sum_{i,j\geq0} t_{2m+1,i,j}x^{i}y^{2j+1}z^{2m-2i-j}.
\end{align*} 
 By~\eqref{tree:schett2}, we have the interpretation for $ t_{m,i,j}$ as 
 \begin{equation}\label{int:tn}
 t_{n,i,j}=|\{T\in\T_n: \ee^*(T)=i,\lfloor\oe(T)/2\rfloor=j\}|. 
 \end{equation}
 On the other side, it follows from~\eqref{tree:schett} that 
  \begin{equation}\label{int:sn}
 s_{n,i,j}=|\{T\in\T_n: \lfloor\ee(T)/2\rfloor=i,\lfloor\oe(T)/2\rfloor=j\}|. 
 \end{equation}
 In addition, observe that for $m\geq1$:
 \begin{itemize}
  \item an increasing tree $T\in\T_{2m-1}$ with $\ee(T)$ being  even (resp.~odd) must have odd-degree (resp.~even-degree) root;
 \item an increasing tree $T\in\T_{2m}$ with $\ee(T)$ being  even (resp.~odd) must have even-degree (resp.~odd-degree) root.
 \end{itemize}
 This observation can be checked easily by induction on $m$. Therefore, comparing~\eqref{int:tn} with~\eqref{int:sn} leads to the relationships:
 \begin{align*}
 s_{2m-1,i,j}&=t_{2m-1,2i-1,j}+t_{2m-1,2i,j},\\
  s_{2m,i,j}&=t_{2m,2i+1,j}+t_{2m,2i,j}
 \end{align*}
 for $m\geq1$ and $i,j\geq0$. This confirms a conjecture posed at the end of~\cite{Ma2019}.

 \subsection{Plane trees and a generating function proof of~\eqref{Weak:thm}}%
 The rest of this section is mainly  devoted to a generating function proof of~\eqref{Weak:thm}.
 
 For a weakly increasing tree $T$, the number of odd-degree nodes in even (resp.~odd) levels of $T$ is denoted by $\eo(T)$ (resp.~$\oo(T)$).
 We begin with the case of plane trees. Let 
 \begin{align}\label{def:N}
 N&=N(x,y,z,w;t):=\sum_{n\geq0}t^n\sum_{T\in\mathcal{P}_n}x^{\oe(T)}y^{\ee(T)}z^{\oo(T)}w^{\eo(T)}\\
 &=y+wxt+(wyz+x^2y)t^2+(w^2xz+wx^3+wxy^2+2xy^2z)t^3+\cdots\nonumber
 \end{align}
 and let 
 \begin{equation}\label{def:Nstar}
 N^*=N^*(x,y,z,w;t):=N(y,x,w,z;t). 
 \end{equation}
Using the first decomposition of plane trees in Fig.~\ref{decom:plan}, we obtain the system of functional equations
  \begin{figure}
\includegraphics[width=8cm,height=3cm]{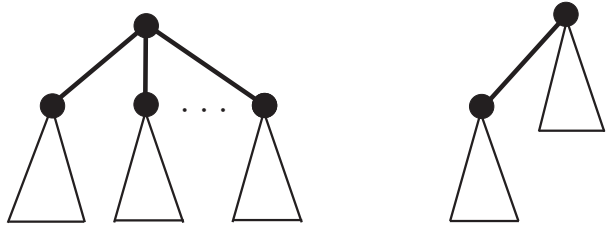}\caption{Two decompositions of plane trees.}\label{decom:plan}
\end{figure}
 \begin{equation}\label{func:N}
 N=\frac{y+wtN^*}{1-(tN^*)^2},
 \end{equation}
 \begin{equation}\label{func:Nstar}
 N^*=\frac{x+ztN}{1-(tN)^2}.
 \end{equation}
 Eliminating $N^*$ we get 
\begin{multline}\label{alg:N}
 t^4N^5-t^4yN^4-2t^2N^3+2t^2yN^2+N-y-twx\\
 +t^4(wz-z^2)N^3+t^3(wx-2xz)N^2-t^2(wz+x^2)N=0.
 \end{multline}
 Setting $w=z$ we have (with $N=N(x,y,z,z;t)$)
 \begin{equation}\label{alg:Nw=z}
 t^4N^5-t^4yN^4-2t^2N^3+(2t^2y-t^3xz)N^2+N-t^2(z^2+x^2)N-y-tzx=0.
 \end{equation}
 It follows from the above functional equation  that $N(x,y,z,z;t)$ is symmetric in $x$ and $z$, which proves algebraically the refined symmetry~\eqref{Weak:thm} for plane trees.

 Next we consider the symmetry for weakly increasing trees on two letters, i.e., on the multiset $\{1^{p_1},2^{p_2}\}$. Let 
 $$
 N^{(2)}=N(x,y,z,w;t_1,t_2):=\sum_{p_1,p_2\geq0}t_1^{p_1}t_2^{p_2}\sum_{T\in\T_{\{1^{p_1},2^{p_2}\}}}x^{\oe(T)}y^{\ee(T)}z^{\oo(T)}w^{\eo(T)}. 
 $$
 Since every weakly increasing trees on $\{1^{p_1},2^{p_2}\}$ can be obtained from a unique weakly increasing trees on $\{1^{p_1}\}$ (i.e., plane trees with $p_1$ nodes labeled by $1$) by attaching to each node a certain  plane tree (with nodes labeled by $2$), which in terms of generating functions (taking the four statistics into account) asserts that  $N(x,y,z,w;t_1,t_2)$ can be obtained from $N(x,y,z,w;t_1)$ by performing the following substitutions: 
 \begin{align}\label{substi:x}
 &x\leftarrow \frac{x+zt_2N}{1-(t_2N)^2}=N^*,\\
 \label{substi:y} &y\leftarrow \frac{y+wt_2N^*}{1-(t_2N^*)^2}=N,\\
 \label{substi:z} &z\leftarrow \frac{z+xt_2N}{1-(t_2N)^2}=\frac{N^*-x}{t_2N},\\
 \label{substi:w} &w\leftarrow \frac{w+yt_2N^*}{1-(t_2N^*)^2}=\frac{N-y}{t_2N^*},
 \end{align}
 where $N=N(x,y,z,w;t_2)$ and $N^*=N(y,x,w,z;t_2)$. Note that the equalities~\eqref{substi:x} and~\eqref{substi:y} follows from~\eqref{func:Nstar} and~\eqref{func:N}, respectively.     To see the equality in~\eqref{substi:w}, notice that every plane trees with at least one edge has the decomposition depicted in Fig.~\ref{decom:plan} (right side), which in terms of generating functions asserts that $N-y$ can be obtained from $wxt_2$ by performing the  substitutions $x\leftarrow N^*$ and $w\leftarrow \frac{w+yt_2N^*}{1-(t_2N^*)^2}$. Thus 
 $$
\frac{t_2N^*(w+yt_2N^*)}{1-(t_2N^*)^2}=N-y,
 $$
 which proves the equality in~\eqref{substi:w}.  The equality in~\eqref{substi:z} then follows from that in~\eqref{substi:w} by exchanging the variables $x\leftrightarrow y$ and $z\leftrightarrow w$. 
 Now the key observation is that $w-z$ after the substitutions~\eqref{substi:z} and~\eqref{substi:w} becomes 
 \begin{equation}\label{w-z}
 \frac{N-y}{t_2N^*}-\frac{N^*-x}{t_2N}=\frac{N-y}{t_2N^*}-\frac{N^*-x}{t_2N}+\frac{F(t_2)}{t_2(1-t_2N)(1+t_2N)(x+t_2zN)}=w-z,
 \end{equation}
 where $F(t)$ (must vanish) denotes the right-hand side of~\eqref{alg:N}  and the last equality follows from  relationship~\eqref{func:Nstar} by calculations (using Maple). That is, $w-z$ is invariant under the substitutions~\eqref{substi:z} and~\eqref{substi:w}, magically. Moreover, after the substitutions~\eqref{substi:x} and~\eqref{substi:z}, $xz$ becomes 
 \begin{equation}\label{xz}
 \frac{N^*(N^*-x)}{t_2N}=\frac{(1+(t_2N)^2)xz+t_2N(x^2+z^2)}{(1-t_2N)^2(1+t_2N)^2},
 \end{equation}
 while $x+z$ becomes 
 \begin{equation}\label{x+z}
 N^*+\frac{N^*-x}{t_2N}=\frac{x+z}{1-t_2N}.
 \end{equation}
 Since 
 $$wz-z^2=(w-z)z,\,\,wx-2xz=(w-z)x-xz\,\,\text{ and }\,\, wz+x^2=(w-z)z+(z^2+x^2),
 $$
 it follows from~\eqref{alg:N},~\eqref{w-z},~\eqref{xz} and~\eqref{x+z} that $N^{(2)}$ satisfies the functional equation 
 $$
 F(N^{(2)},N,y,xz,x+z)+(w-z)\bar{F}=0,
 $$
 where $F$ is a polynomial in   five variables with coefficients in $\Z[t_1,t_2]$ and $\bar{F}$ is a formal power series in $\Z[x,y,z,w][[t_1,t_2]]$. This proves that $N^{(2)}|_{w=z}$ is symmetric in $x$ and $z$, as $N|_{w=z}$ is. 
 
 In general, set
 $$
 N^{(n)}=N(x,y,z,w;t_1,\ldots,t_n):=\sum_{p_1,\ldots,p_n\geq0}t_1^{p_1}\cdots t_n^{p_n}\sum_{T\in\T_{\{1^{p_1},\ldots,n^{p_n}\}}}x^{\oe(T)}y^{\ee(T)}z^{\oo(T)}w^{\eo(T)}.
 $$
 As every weakly increasing trees on $\{1^{p_1},\ldots,n^{p_n}\}$ can be obtained from a unique weakly increasing trees on $\{1^{p_1},\ldots,(n-1)^{p_{n-1}}\}$ by attaching to each node a certain  plane tree (with nodes labeled by $n$), the generating function $N^{(n)}$ can be obtained from $N^{(n-1)}$ by performing the  substitutions in~\eqref{substi:x},~\eqref{substi:y},~\eqref{substi:z} and~\eqref{substi:w} in which the variable $t_2$ is replaced by $t_n$. 
 Thus, by induction on $n$ and exactly the same discussions as in the case $n=2$ above, we can prove that $N^{(n)}$ satisfies the functional equation 
 $$
 F(N^{(n)},N^{(n-1)},\ldots,N^{(1)},y,xz,x+z)+(w-z)\bar{F}=0,
 $$
 where $N^{(n-k)}=N^{(n-k)}(x,y,z,w;t_{k+1},\ldots,t_{n})$ for $1\leq k\leq n-1$,  $F$ is a polynomial in  $n+3$ variables with coefficients in $\Z[t_1,\ldots,t_n]$ and $\bar{F}$ is a formal power series in 
 $\Z[x,y,z,w][[t_1,\ldots,t_n]]$. This implies that $N^{(n)}|_{w=z}$ is symmetric in $x$ and $z$, as  by induction the formal series $N^{(1)}|_{w=z},\ldots,N^{(n-1)}|_{w=z}$ are  symmetric in $x$ and $z$. This provides a generating function proof of~\eqref{Weak:thm}.
\subsection{The Schett polynomials for plane trees} We will report a connection between plane trees and fighting fish with marked tail studied in~\cite{Duchi2017}. The following version of the Multivariable Lagrange Inversion Formula will be used, which is reproduced from~\cite{Gessel}, for the sake of completeness.  

\begin{theorem}[See~\text{\cite[Theorem~4]{Gessel}}]\label{F:Lagran}
 Let $g_i({\bf t})$, $i=1,\ldots,m$, be formal power series in $m$ indeterminates $t_1,\ldots,t_m$. Then there exists a unique solution $F_1,\ldots,F_m$ to the system of equations 
$$
F_i=g_i(F_1,\ldots,F_m),\quad i=1,\ldots,m,
$$
and for all Laurent series $\Phi({\bf t})$ we have 
$$
\Phi(F_1,\ldots,F_m)=\sum_{{\bf n}}[{\bf t}^{\bf n}]|K({\bf t})|\Phi({\bf t}){\bf g}^{\bf n}({\bf t}),
$$
where $|K({\bf t})|$ is the determinant of the $m\times m$ matrix  
$$
K({\bf t})=\biggl(\delta_{ij}-\frac{t_i}{g_i({\bf t})}\frac{\partial g_i({\bf t})}{\partial t_j}\biggr).
$$
\end{theorem}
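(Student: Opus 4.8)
The assertion is Good's multivariable Lagrange inversion formula, and the plan is to prove it by a formal change of variables that recovers $|K|$ as a Jacobian. First I would reintroduce grading variables: let $f_1,\ldots,f_m$ be the unique formal power series in $\mathbf{s}=(s_1,\ldots,s_m)$ solving
$$f_i=s_ig_i(f_1,\ldots,f_m),\qquad i=1,\ldots,m,$$
with $f_i\equiv 0\pmod{s_i}$; existence and uniqueness follow from the formal implicit function theorem, since the Jacobian of $\mathbf{f}\mapsto(f_i-s_ig_i(\mathbf{f}))_i$ is the identity modulo the maximal ideal. Specializing $\mathbf{s}=\mathbf{1}$ returns exactly the fixed point $F_i=g_i(\mathbf{F})$ of the statement, and $\Phi(\mathbf{F})=\sum_{\mathbf{n}}[\mathbf{s}^{\mathbf{n}}]\Phi(\mathbf{f})$. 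It therefore suffices to prove, for every multi-index $\mathbf{n}$, the coefficient identity
$$[\mathbf{s}^{\mathbf{n}}]\Phi(\mathbf{f})=[\mathbf{t}^{\mathbf{n}}]\bigl(|K(\mathbf{t})|\,\Phi(\mathbf{t})\,\mathbf{g}(\mathbf{t})^{\mathbf{n}}\bigr),$$
since summing this against $\mathbf{s}^{\mathbf{n}}$ and then setting $\mathbf{s}=\mathbf{1}$ reproduces the displayed formula.

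The engine is the substitution $\mathbf{s}\leftrightarrow\mathbf{f}$. Inverting the defining relation gives $s_i=f_i/g_i(\mathbf{f})$, a formal change of variables fixing the origin to first order, hence invertible. Writing coefficient extraction as a formal multivariate residue, $[\mathbf{s}^{\mathbf{n}}]\Phi(\mathbf{f})=\operatorname{Res}_{\mathbf{s}}\bigl(\mathbf{s}^{-\mathbf{n}-\mathbf{1}}\Phi(\mathbf{f})\bigr)$, I would pass from $\mathbf{s}$ to $\mathbf{f}$ via the residue change-of-variables rule, which inserts the Jacobian $J=\det(\partial s_i/\partial f_j)$. A direct computation gives
$$\frac{\partial s_i}{\partial f_j}=\frac{1}{g_i(\mathbf{f})}\Bigl(\delta_{ij}-\frac{f_i}{g_i(\mathbf{f})}\frac{\partial g_i(\mathbf{f})}{\partial f_j}\Bigr),\qquad J=\Bigl(\prod_{i=1}^m g_i(\mathbf{f})\Bigr)^{-1}|K(\mathbf{f})|.$$
Since $\mathbf{s}^{-\mathbf{n}-\mathbf{1}}=\mathbf{f}^{-\mathbf{n}-\mathbf{1}}\,\mathbf{g}(\mathbf{f})^{\mathbf{n}+\mathbf{1}}$, the factor $\prod_i g_i^{-1}$ from $J$ cancels exactly one power of $\mathbf{g}$, so the residue becomes $\operatorname{Res}_{\mathbf{f}}\bigl(\mathbf{f}^{-\mathbf{n}-\mathbf{1}}\Phi(\mathbf{f})\mathbf{g}(\mathbf{f})^{\mathbf{n}}|K(\mathbf{f})|\bigr)=[\mathbf{f}^{\mathbf{n}}]\bigl(\Phi\,\mathbf{g}^{\mathbf{n}}|K|\bigr)$. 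Renaming the dummy variable $\mathbf{f}\to\mathbf{t}$ yields the required per-$\mathbf{n}$ identity, and the reduction above finishes the argument.

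The step I expect to be the main obstacle is justifying the multivariate residue change-of-variables rule purely formally: in one variable this is just integration by parts, but in several variables one must verify that the substitution $\mathbf{s}\leftrightarrow\mathbf{f}$ is compatible with residue extraction and that $J$ enters with the correct sign and multiplicity. I would discharge this either by iterating one-variable Lagrange inversion one coordinate at a time, tracking the product of partial Jacobians through the chain rule so that $|K|$ assembles from the triangularized single-variable factors, or---more in the combinatorial spirit of this paper---by replacing the analytic step with Gessel's weight-preserving bijection, in which $|K|$ emerges as the signed generating function of a sign-reversing involution on forests decorated by the $m$ colors and the monomials $\mathbf{g}^{\mathbf{n}}$ record the subtrees attached at each color. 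Either route leaves only the elementary bookkeeping already displayed above.
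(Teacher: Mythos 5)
The paper offers no proof of this statement at all: it is imported verbatim from Gessel~\cite[Theorem~4]{Gessel} ``for the sake of completeness'' and used as a black box to derive~\eqref{eq:jacobiN} from the system~\eqref{fun:line}. So the only meaningful comparison is with the cited source, where the formula is proved \emph{combinatorially} (a weight-preserving, sign-reversing argument on colored forests, with the determinant $|K|$ entering as a signed sum over permutations). Your proposal instead follows the classical Good--Jacobi route: introduce grading variables, solve the graded system $f_i=s_ig_i(\mathbf{f})$ by the formal implicit function theorem, write coefficient extraction as a formal residue, and change variables $\mathbf{s}\leftrightarrow\mathbf{f}$. The algebra you display is correct: $\partial s_i/\partial f_j=g_i^{-1}\bigl(\delta_{ij}-(f_i/g_i)\,\partial g_i/\partial f_j\bigr)$, so the Jacobian is $\bigl(\prod_i g_i\bigr)^{-1}|K(\mathbf{f})|$, and since $\mathbf{s}^{-\mathbf{n}-\mathbf{1}}=\mathbf{f}^{-\mathbf{n}-\mathbf{1}}\mathbf{g}(\mathbf{f})^{\mathbf{n}+\mathbf{1}}$, the factor $\prod_i g_i^{-1}$ cancels one power of $\mathbf{g}$, leaving exactly $[\mathbf{t}^{\mathbf{n}}]\,\Phi\,\mathbf{g}^{\mathbf{n}}|K|$. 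What each approach buys: yours is short and explains conceptually where $|K|$ comes from (a Jacobian); Gessel's avoids residue calculus entirely and stays inside elementary formal power series manipulations.

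Two gaps separate your sketch from a proof. The first, which you flag yourself, is that the formal multivariate change-of-variables rule for residues is not bookkeeping but the actual content of the theorem: in the formal category one must prove Jacobi's residue identity, $\mathrm{Res}_{\mathbf{f}}\bigl(\boldsymbol{\sigma}(\mathbf{f})^{\mathbf{a}}\det(\partial\sigma_i/\partial f_j)\bigr)=\delta_{\mathbf{a},-\mathbf{1}}$ for an origin-preserving invertible substitution $\boldsymbol{\sigma}$ and arbitrary Laurent exponents $\mathbf{a}$ (negative exponents are where the difficulty lies), and one of your two proposed remedies must actually be executed --- noting that the second remedy (Gessel's involution) amounts to reproving the cited theorem, which would make the residue scaffolding redundant. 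The second gap you do not flag: uniqueness of the ungraded fixed point $F_i=g_i(\mathbf{F})$ and the legitimacy of setting $\mathbf{s}=\mathbf{1}$ do not follow from the graded implicit function theorem. Over an arbitrary coefficient ring the uniqueness assertion is even false --- for $m=1$, $g(t)=t$, every series satisfies $F=g(F)$ --- and the family $\bigl\{[\mathbf{s}^{\mathbf{n}}]\Phi(\mathbf{f})\bigr\}_{\mathbf{n}}$ need not be formally summable at $\mathbf{s}=\mathbf{1}$. The theorem is implicitly stated, and is applied in this paper, in a setting where the coefficients of the $g_i$ lie in a filtered power series ring making this family summable: in the application~\eqref{fun:line} every occurrence of $a$ or $b$ in $g_1=t^2ab^2+twb+y$ and $g_2=t^2a^2b+tza+x$ carries a factor of $t$, which is exactly what makes $\sum_{\mathbf{n}}[\mathbf{t}^{\mathbf{n}}]\Phi\,\mathbf{g}^{\mathbf{n}}|K|$ converge $t$-adically. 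A complete write-up must state this hypothesis and use it both for the specialization $\mathbf{s}=\mathbf{1}$ and for a contraction argument giving uniqueness of $\mathbf{F}$ in that topology.
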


Recall the definitions of $N$ and $N^*$ from~\eqref{def:N} and~\eqref{def:Nstar}, respectively.  By the functional equations~\eqref{func:N} and~\eqref{func:Nstar}, we have 
\begin{equation}\label{fun:line}
N= N(tN^*)^2+twN^*+y\quad\text{and}\quad N^*=N^*(tN)^2+tzN+x.
\end{equation}
Applying the Multivariable Lagrange Inversion Formula in Theorem~\ref{F:Lagran} then results in the following expression for $N$. 

\begin{proposition} Let $N=N(x,y,z,w;t)$ be defined in~\eqref{def:N}. Then
\begin{equation}\label{eq:jacobiN}
N=\sum\limits_{n,m\geq 0}[a^{n}b^{m+1}]K (t^2ab^2+twb+y)^n(t^2ba^2+tza+x)^m,
\end{equation}
where 
$$K=xy+atyz+btwx-2a^2b^2t^3z-2a^2b^2t^3w-4a^3b^3t^4.$$
Consequently, the number of plane trees $T$ with  $\oe(T)=i$,  $\ee(T)=j$, $\oo(T)=k$ and $\eo(T)=l$ equals
\begin{equation}\label{eq:ijkl}
(i+k)\frac{(\frac{i+k+l}{2}+j-1)!(\frac{j+k+l-1}{2}+i-1)!}{i!j!k!l!(\frac{i+k-l}{2})!(\frac{j+l-k-1}{2})!},
\end{equation}
if both $\frac{i+k-l}{2}$ and $\frac{j+l-k-1}{2}$ are non-negative integers; and $0$, otherwise.
\end{proposition}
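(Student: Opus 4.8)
The plan is to treat the two functional equations in~\eqref{fun:line} as a fixed-point system and apply the Multivariable Lagrange Inversion Formula (Theorem~\ref{F:Lagran}). Writing $F_1=N$ and $F_2=N^*$ and renaming the two Lagrange indeterminates $t_1,t_2$ of Theorem~\ref{F:Lagran} as $a,b$, the system~\eqref{fun:line} reads $F_1=g_1(F_1,F_2)$ and $F_2=g_2(F_1,F_2)$, where
\begin{equation*}
g_1(a,b)=t^2ab^2+twb+y\qquad\text{and}\qquad g_2(a,b)=t^2a^2b+tza+x
\end{equation*}
are exactly the two factors raised to powers in~\eqref{eq:jacobiN}. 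Since $g_1(0,0)=y$ and $g_2(0,0)=x$ while every remaining monomial of $g_1,g_2$ carries a positive power of the edge-marking variable $t$, the $t$-grading provides a unique solution $(N,N^*)$ in $\Z[x,y,z,w][[t]]$ with $N=y+O(t)$ and $N^*=x+O(t)$, so the hypotheses of Theorem~\ref{F:Lagran} hold.

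To isolate $N=F_1$ I would apply the formula with $\Phi(a,b)=a$, obtaining $N=\sum_{n_1,n_2\geq0}[a^{n_1}b^{n_2}]\,a\,|K(a,b)|\,g_1^{n_1}g_2^{n_2}$. The next step is to compute the determinant of $K(a,b)=\bigl(\delta_{ij}-\tfrac{t_i}{g_i}\,\partial g_i/\partial t_j\bigr)$; evaluating its four entries, expanding the $2\times2$ determinant, and clearing the common denominator $g_1g_2$ should produce the clean identity
\begin{equation*}
g_1g_2\,|K(a,b)|=xy+atyz+btwx-2a^2b^2t^3z-2a^2b^2t^3w-4a^3b^3t^4,
\end{equation*}
whose right-hand side is precisely the polynomial $K$ in the statement. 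Substituting $|K|=K/(g_1g_2)$, absorbing the factor $a$ through $[a^{n_1}b^{n_2}]\,a(\cdots)=[a^{n_1-1}b^{n_2}](\cdots)$, and reindexing by $n=n_1-1$, $m=n_2-1$ turns the display into~\eqref{eq:jacobiN}; here one verifies that the $n_1=0$ contributions vanish because of the factor $a$ and that the remaining boundary terms reproduce the correct low-order (single-node) behaviour, fixing the summation range $n,m\geq0$.

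For the explicit count~\eqref{eq:ijkl}, note that a plane tree contributing $x^iy^jz^kw^l$ has $i+j+k+l$ nodes, hence $i+j+k+l-1$ edges, so the sought number is $[t^{i+j+k+l-1}x^iy^jz^kw^l]N$. I would expand $g_1^n$ and $g_2^m$ by the multinomial theorem, with indices $a_1+a_2+a_3=n$ and $b_1+b_2+b_3=m$, and expand each of the six monomials of $K$ separately. Matching the exponents of $x,y,z,w$ determines $a_1,a_2,b_1,b_2$, after which the two coefficient-extraction conditions on $a$ and $b$ determine $a_3,b_3$; thus every monomial of $K$ contributes a single, fully determined product of two multinomial coefficients. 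For the leading monomial $xy$ these conditions read $a_3=\tfrac{i+k-l}{2}$ and $b_3=\tfrac{j+l-k-1}{2}$, which is exactly where the integrality and nonnegativity constraints of~\eqref{eq:ijkl} originate, and a short computation shows that all six monomials share the common values $n=\tfrac{i+k+l}{2}+j-1$ and $m=\tfrac{j+k+l-1}{2}+i-1$.

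The hard part will be the final collapse. After normalising each of the six products by the common core $\tfrac{n!\,m!}{i!j!k!l!(\frac{i+k-l}{2})!(\frac{j+l-k-1}{2})!}$, each term reduces to an elementary polynomial in $i,j,k,l$: the monomials $xy,atyz,btwx$ contribute $ij,\,jk,\,il$, while $-2a^2b^2t^3z,\,-2a^2b^2t^3w,\,-4a^3b^3t^4$ contribute $-k(i+k-l),\,-l(j+l-k-1),\,-(i+k-l)(j+l-k-1)$. One must then prove the polynomial identity that these six signed contributions sum to $i+k$, with everything else cancelling; this cancellation is the true content of~\eqref{eq:ijkl} and the source of its prefactor $(i+k)$. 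By contrast, the identification of $g_1,g_2$, the determinant evaluation, and the reindexing are routine bookkeeping.
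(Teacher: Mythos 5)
Your proposal is correct and follows essentially the same route as the paper: the paper likewise obtains~\eqref{eq:jacobiN} by applying Theorem~\ref{F:Lagran} to the system~\eqref{fun:line} and then extracts $[x^iy^jz^kw^l]$ (it sets $t=1$, which is harmless since the exponent of $t$ is forced to be $i+j+k+l-1$), producing one multinomial product per monomial of $K$ and simplifying; I checked that your determinant identity $g_1g_2|K(a,b)|=K$, your common values of $n$ and $m$, your six normalized contributions $ij$, $jk$, $il$, $-k(i+k-l)$, $-l(j+l-k-1)$, $-(i+k-l)(j+l-k-1)$, and their collapse to $i+k$ all hold, so your extra detail fills in exactly the two steps the paper leaves as routine. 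The one wrinkle --- shared by the paper itself --- is the reindexing boundary: the term $(n_1,n_2)=(1,0)$ in the Lagrange sum contributes exactly $y$ (the single-node tree), so the right-hand side of~\eqref{eq:jacobiN} as literally stated equals $N-y$ rather than $N$ (its coefficient of $t^0$ is $0$, not $y$), a defect of the statement rather than of your argument, visible in~\eqref{eq:ijkl} only in the degenerate one-node case $(i,j,k,l)=(0,1,0,0)$.
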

\begin{proof}
Setting $t=1$ in the expression~\eqref{eq:jacobiN} for $N$, we have 
 \begin{align*}
 [x^iy^jz^kw^l]N|_{t=1}&={\frac{1}{2}(i+k-l)+l+j-1\choose{\frac{1}{2}(i+k-l),l,j-1}}{\frac{1}{2}(j+l-k-1)+k+i-1\choose{\frac{1}{2}(j+l-k-1),k,i-1}}\\
 &\quad+{\frac{1}{2}(i+k-l)+l+j-1\choose{\frac{1}{2}(i+k-l),l,j-1}}{\frac{1}{2}(j+l-k-1)+k+i-1\choose{\frac{1}{2}(j+l-k-1),k-1,i}}\\
 &\quad+{\frac{1}{2}(i+k-l)+l+j-1\choose{\frac{1}{2}(i+k-l),l-1,j}}{\frac{1}{2}(j+l-k-1)+k+i-1\choose{\frac{1}{2}(j+l-k-1),k,i-1}}\\
 &\quad-2{\frac{1}{2}(i+k-l-2)+l+j\choose{\frac{1}{2}(i+k-l-2),l,j}}{\frac{1}{2}(j+l-k-1)+k+i-1\choose{\frac{1}{2}(j+l-k-1),k-1,i}}\\
 &\quad-2{\frac{1}{2}(i+k-l)+l+j-1\choose{\frac{1}{2}(i+k-l),l-1,j}}{\frac{1}{2}(j+l-k-3)+k+i\choose{\frac{1}{2}(j+l-k-3),k,i}}\\
 &\quad-4{\frac{1}{2}(i+k-l-2)+l+j\choose{\frac{1}{2}(i+k-l-2),l,j}}{\frac{1}{2}(j+l-k-3)+k+i\choose{\frac{1}{2}(j+l-k-3),k,i}},
\end{align*}
which is simplified to~\eqref{eq:ijkl}. 
\end{proof}

\begin{corollary}[Jacobi elliptic functions for plane trees -- analog of $s_{2n,i,0}$]\label{cor:jaco2}
The number of plane trees $T$ with $\o(T)=0$,  $\oe(T)=2i$  and $\ee(T)=2j+1$ is 
\begin{equation}\label{eq:jaco2}
\frac{1}{2j+1}{2j+i\choose i}{2i+j-1\choose j}. 
\end{equation}
Consequently, we have the following combinatorial identity 
\begin{equation}\label{com:ternary}
\frac{1}{2n+1}{3n\choose n}=\sum_{j=0}^{n-1}\frac{1}{2j+1}{n+j\choose 2j}{2n-j-1\choose j}.
\end{equation}
\end{corollary}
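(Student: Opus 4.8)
The plan is first to obtain the closed form~\eqref{eq:jaco2} as a specialization of the general enumeration~\eqref{eq:ijkl}, and then to read off the identity~\eqref{com:ternary} by counting in two ways those plane trees in which every node has even degree.

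For the first part, I would use that $\o(T)=\oo(T)+\eo(T)$, so that the condition $\o(T)=0$ is equivalent to $\oo(T)=\eo(T)=0$. Formula~\eqref{eq:ijkl} counts plane trees with $(\oe,\ee,\oo,\eo)=(i,j,k,l)$, so to get the trees of the corollary I simply replace the quadruple $(i,j,k,l)$ there by $(2i,2j+1,0,0)$. After this substitution the two integrality expressions become $\tfrac{i+k-l}{2}=i$ and $\tfrac{j+l-k-1}{2}=j$, both non-negative integers, so the count is nonzero and~\eqref{eq:ijkl} collapses to
$$2i\,\frac{(i+2j)!\,(2i+j-1)!}{(2i)!\,(2j+1)!\,i!\,j!}.$$
Using $(2i)!=2i\,(2i-1)!$ and $(2j+1)!=(2j+1)\,(2j)!$ to cancel, this equals $\frac{1}{2j+1}\binom{2j+i}{i}\binom{2i+j-1}{j}$, which is~\eqref{eq:jaco2}.

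For the identity~\eqref{com:ternary}, the idea is to enumerate the plane trees $T$ with $\o(T)=0$ --- equivalently, those in which every node has even degree --- by their number of nodes. Because every degree is even, the total number of edges is even and hence the number of nodes is odd; moreover a tree of this kind with $\oe(T)=2i$ and $\ee(T)=2j+1$ has exactly $2i+2j+1$ nodes. Computing the enumerator one way, if $F=F(t)$ marks such trees by number of nodes, then decomposing the root into an even number of (again even) subtrees gives the functional equation $F=t/(1-F^{2})$, and Lagrange inversion yields $[t^{2n+1}]F=\tfrac{1}{2n+1}\binom{3n}{n}$; thus there are $\tfrac{1}{2n+1}\binom{3n}{n}$ such trees on $2n+1$ nodes. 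Computing it the other way, the same trees on $2n+1$ nodes are exactly those with $\oe(T)=2i$, $\ee(T)=2j+1$ and $i+j=n$, so summing~\eqref{eq:jaco2} over $i+j=n$ enumerates them as well. Equating the two answers, substituting $i=n-j$, and simplifying $\binom{2j+i}{i}=\binom{n+j}{2j}$ and $\binom{2i+j-1}{j}=\binom{2n-j-1}{j}$ gives~\eqref{com:ternary}, where the upper limit may be taken to be $n-1$ since the $j=n$ term $\binom{n-1}{n}$ vanishes.

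The two steps are mostly routine factorial and binomial bookkeeping. The one place deserving attention will be the double count of even plane trees: establishing that $\tfrac{1}{2n+1}\binom{3n}{n}$ is their enumerator on $2n+1$ nodes via $F=t/(1-F^{2})$ and Lagrange inversion, and confirming that the boundary term $j=n$ drops out so that the summation range matches~\eqref{com:ternary}.
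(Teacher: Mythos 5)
Your proof is correct, but it takes a genuinely different route from the paper's in both halves. For the closed form~\eqref{eq:jaco2}, the paper does not reuse~\eqref{eq:ijkl}: it returns to the Lagrange-inversion expression~\eqref{eq:jacobiN} and sets $t=1$, $z=w=0$ there, so that the kernel collapses to $K=xy-4a^3b^3$ and the extraction of $[x^{2i}y^{2j+1}]$ involves only two binomial terms; you instead specialize the already-established six-term consequence~\eqref{eq:ijkl} at $(\oe,\ee,\oo,\eo)(T)=(2i,2j+1,0,0)$. Both are routine factorial manipulations; yours avoids redoing any inversion work, at the small cost of inheriting the boundary blemish of~\eqref{eq:ijkl} at $i=j=0$, where the factor $(2i+j-1)!$ is undefined (harmless, since that case never enters the sum in~\eqref{com:ternary} for $n\geq 1$). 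For the identity~\eqref{com:ternary}, the paper simply cites~\cite{Deutsch2002} for the fact that plane trees with $2n$ edges and $\o(T)=0$ are counted by $\frac{1}{2n+1}\binom{3n}{n}$, whereas you reprove this fact self-containedly via the functional equation $F=t/(1-F^2)$ and one-variable Lagrange inversion; this is a correct and genuinely more self-contained argument ($\phi(u)=(1-u^2)^{-1}$ gives $[t^{2n+1}]F=\frac{1}{2n+1}[u^{2n}](1-u^2)^{-(2n+1)}=\frac{1}{2n+1}\binom{3n}{n}$). One small point you should make explicit: your assertion that the even-degree trees on $2n+1$ nodes are exactly those with $\oe(T)=2i$, $\ee(T)=2j+1$, $i+j=n$ uses that $\oe(T)$ is necessarily even when $\o(T)=0$; this follows in one line either from the observation that the odd-level nodes are precisely the children of even-level nodes (hence their number is a sum of even degrees), or from the integrality conditions in~\eqref{eq:ijkl} with $k=l=0$. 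With that line added, your argument is complete and matches the paper's conclusion.
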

\begin{proof}
Setting $t=1$, $z=w=0$ in the expression~\eqref{eq:jacobiN} for $N$, we have
$$
N(x,y,0,0;1)=\sum\limits_{n,m\geq 0}[a^{n}b^{m+1}]K (ab^2+y)^n(ba^2+x)^m,
$$
where $K=xy-4a^3b^3$.
Extracting the coefficients of $x^{2i}y^{2j+1}$ gives 
\begin{equation*}
[x^{2i}y^{2j+1}]N(x,y,0,0;1)={2j+i\choose 2j}{2i+j-1\choose 2i-1}-4{2j+i\choose 2j+1}{2i+j-1\choose 2i},
\end{equation*}
which is simplified to~\eqref{eq:jaco2}.  The combinatorial identity~\eqref{com:ternary} then follows from the fact (see~\cite{Deutsch2002}) that the number of plane trees $T$ with $2n$ edges and $\o(T)=0$ is $1/(2n+1){3n\choose n}$, a number that also enumerates ternary trees with $n$ internal nodes. 
\end{proof}

\begin{corollary}[Jacobi elliptic functions for plane trees -- analog of $s_{2n+1,0,j}$]\label{cor:fish}
The number of plane trees $T$ with $\ee(T)=0$,  $\oe(T)=2i+1$  and $\o(T)=2j+1$ is 
\begin{equation}\label{eq:fish}
\frac{2i+2j+1}{(2i+1)(2j+1)}{2i+j\choose j}{2j+i\choose i}. 
\end{equation}
\end{corollary}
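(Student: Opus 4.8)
The plan is to specialize the closed-form expression~\eqref{eq:jacobiN} for $N=N(x,y,z,w;t)$ and extract a single coefficient, exactly in the spirit of the proof of Corollary~\ref{cor:jaco2}. Since we want plane trees with $\ee(T)=0$ we set $y=0$, and to record $\o(T)=\oo(T)+\eo(T)$ by one variable we set $z=w$; finally we put $t=1$. Under these substitutions the kernel becomes $K=bwx-4a^2b^2w-4a^3b^3$ and the two factors in~\eqref{eq:jacobiN} collapse to $b^n(ab+w)^n$ and $(a^2b+aw+x)^m$, so that
\[
N(x,0,w,w;1)=\sum_{n,m\ge0}[a^nb^{m+1}]\bigl(bwx-4a^2b^2w-4a^3b^3\bigr)\,b^n(ab+w)^n(a^2b+aw+x)^m,
\]
and the number we want is $[x^{2i+1}w^{2j+1}]N(x,0,w,w;1)$.

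First I would expand $(ab+w)^n$ by the binomial theorem and $(a^2b+aw+x)^m$ by the multinomial theorem, and match the exponents of $x,w,a,b$ coming from each of the three monomials of $K$. In every case the resulting linear system forces $n=i+j$ and $m=2i+j$, leaving one free summation index, say $p$; the contribution of a kernel monomial is then a fixed factor $\binom{i+j}{p}$ times a multinomial coefficient in which two of the three lower entries sum to a constant, so that it splits as $\binom{2i+j}{\cdot}\binom{\cdot}{\cdot}$. Summing over $p$ collapses each contribution by the Vandermonde identity, yielding
\[
[x^{2i+1}w^{2j+1}]N(x,0,w,w;1)=\binom{2i+j}{2i}\binom{i+2j}{i}-4\binom{2i+j}{2i+1}\binom{i+2j-1}{i-1}-4\binom{2i+j}{2i+1}\binom{i+2j-1}{i-2},
\]
the three terms corresponding to the monomials $bwx$, $-4a^2b^2w$ and $-4a^3b^3$ of $K$.

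Next I would merge the last two terms by Pascal's rule, $\binom{i+2j-1}{i-1}+\binom{i+2j-1}{i-2}=\binom{i+2j}{i-1}$, reducing the answer to the two-term difference $\binom{2i+j}{2i}\binom{i+2j}{i}-4\binom{2i+j}{2i+1}\binom{i+2j}{i-1}$, which has the same shape as the expression obtained in Corollary~\ref{cor:jaco2}. Finally~\eqref{eq:fish} follows by factoring out the first product: using $\binom{2i+j}{2i+1}/\binom{2i+j}{2i}=j/(2i+1)$ and $\binom{i+2j}{i-1}/\binom{i+2j}{i}=i/(2j+1)$, the bracket becomes $1-\tfrac{4ij}{(2i+1)(2j+1)}=\tfrac{2i+2j+1}{(2i+1)(2j+1)}$, and the identities $\binom{2i+j}{2i}=\binom{2i+j}{j}$, $\binom{i+2j}{i}=\binom{2j+i}{i}$ give precisely~\eqref{eq:fish}.

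I expect the exponent bookkeeping to be the only delicate point: because $z=w$ turns the second factor into a genuine trinomial (unlike the binomial case $z=0$ of Corollary~\ref{cor:jaco2}), each kernel monomial produces an internal sum rather than a single term, and one must verify that every such sum is of Vandermonde type and that $n$ and $m$ are pinned down uniformly across the three monomials. As consistency checks I would compare with the low-order expansion in~\eqref{def:N} (the monomials $wx$ and $w^3x+wx^3$ give the cases $(i,j)=(0,0),(0,1),(1,0)$, all equal to $1$) and note that~\eqref{eq:fish} is symmetric in $i$ and $j$, as forced by the $\oe\leftrightarrow\o$ symmetry of plane trees in~\eqref{sym:plane}.
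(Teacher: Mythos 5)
Your proposal is correct and follows essentially the same route as the paper: both specialize the Lagrange-inversion expression~\eqref{eq:jacobiN} at $t=1$, $y=0$, $z=w$, extract the coefficient of $x^{2i+1}w^{2j+1}$ to get the three-term expression (your binomials $\binom{2i+j}{2i}\binom{i+2j}{i}$, etc., coincide with the paper's $\binom{2i+j}{j}\binom{2j+i}{i}$, etc., by symmetry of binomial coefficients), merge the last two terms by Pascal's rule, and simplify to~\eqref{eq:fish}. Your write-up merely makes explicit the Vandermonde collapsing and the ratio manipulations that the paper leaves implicit in ``extracting the coefficients gives.''
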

\begin{proof}
Setting $t=1$, $y=0$ and $z=w$ in the expression~\eqref{eq:jacobiN} for $N$, we have
$$
N(x,0,z,z;1)=\sum\limits_{n,m\geq 0}[a^{n}b^{m+1}]K (ab^2+zb)^n(ba^2+za+x)^m,
$$
where $K=bxz-4a^2b^2z-4a^3b^3$.
Extracting the coefficients of $x^{2i+1}z^{2j+1}$ gives 
\begin{align*}
&\quad[x^{2i+1}z^{2j+1}]N(x,0,z,z;1)\\
&={2i+j\choose j}{2j+i\choose i}-4{2i+j\choose j-1}{2j+i-1\choose i-1}-4{2i+j\choose j-1}{2j+i-1\choose i-2}\\
&={2i+j\choose j}{2j+i\choose i}-4{2i+j\choose j-1}{2j+i\choose i-1},
\end{align*}
which is simplified to~\eqref{eq:fish}.  
\end{proof}

 Fighting fish were introduced by
Duchi, Guerrini, Rinaldi and Schaeffer~\cite{Duchi2017} as combinatorial structures made of square tiles that form two dimensional branching surfaces.  They showed that 
 fighting fish with $i+1$ left lower free edges and $j+1$ right lower free edges with a marked tail is also enumerated by
$$
\frac{2i+2j+1}{(2i+1)(2j+1)}{2i+j\choose j}{2j+i\choose i}. 
$$
In view of Corollary~\ref{cor:fish}, we have the following result.

\begin{theorem}\label{thm:fish}
The number of plane trees $T$ with $\ee(T)=0$,  $\oe(T)=2i+1$  and $\o(T)=2j+1$ equals the number of  fighting fish with $i+1$ left lower free edges and $j+1$ right lower free edges with a marked tail.
\end{theorem}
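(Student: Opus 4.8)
The plan is to observe that Theorem~\ref{thm:fish} is an immediate consequence of two enumeration results that each evaluate the relevant count to the same closed formula, so that no new combinatorial construction is strictly needed. First I would invoke Corollary~\ref{cor:fish}, which has already established that the number of plane trees $T$ satisfying $\ee(T)=0$, $\oe(T)=2i+1$ and $\o(T)=2j+1$ equals exactly
$$
\frac{2i+2j+1}{(2i+1)(2j+1)}{2i+j\choose j}{2j+i\choose i}.
$$
Next I would recall that Duchi, Guerrini, Rinaldi and Schaeffer~\cite{Duchi2017} proved that the number of fighting fish with $i+1$ left lower free edges and $j+1$ right lower free edges carrying a marked tail is given by the very same expression. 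Comparing the two formulas then yields the asserted equality of cardinalities directly.

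Concretely, the steps I would carry out are: (i) quote Corollary~\ref{cor:fish} for the plane-tree side; (ii) quote the enumeration of Duchi et al.\ for the fighting-fish side; and (iii) match the two closed forms. Since both sides have already been reduced to the identical rational-binomial expression, nothing beyond recognizing that the formulas coincide is required, and the argument is therefore very short.

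The only genuinely hard part — which I would flag as the natural next problem rather than as something the statement itself demands — is to upgrade this numerical coincidence to an explicit, structure-preserving bijection between the plane trees with the prescribed parity statistics and the fighting fish with a marked tail. Such a map would have to translate the parity statistics $\ee$, $\oe$, $\o$ on a plane tree into the left and right lower free edges and the tail marking on a fighting fish, and producing a transparent combinatorial correspondence that does this appears delicate; I would leave it as an open question and prove the theorem here purely by comparison of the two enumerations.
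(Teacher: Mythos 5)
Your proposal is correct and takes essentially the same route as the paper: Theorem~\ref{thm:fish} is obtained there precisely by comparing the closed formula of Corollary~\ref{cor:fish} with the identical enumeration of fighting fish with a marked tail due to Duchi, Guerrini, Rinaldi and Schaeffer~\cite{Duchi2017}. The paper likewise notes that finding an explicit bijection remains an interesting open problem, exactly as you flag.
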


It would be interesting to find a bijective proof of Theorem~\ref{thm:fish}. 

\section{A group action on weakly increasing binary trees}
\label{sec:4}
 
 This section is devoted to construct a $\Z_2^{p}$-action on trees, called {\em triangle group action} (see Fig.~\ref{fig:involution}), to prove Theorem~\ref{thm:action}. We will define this group action on a class of trees, called weakly increasing binary trees which are in natural bijection with weakly increasing trees, rather than on weakly increasing trees directly,  for convenience's sake.
 
 \begin{definition}[Weakly increasing binary trees]
 A  weakly increasing binary  tree on $M$ is a  binary tree  such that 
 \begin{enumerate}[(i)]
\item  it contains $p+1$ nodes that are labeled by elements from the multiset $M\cup\{0\}$,
\item  the node $0$ has exactly one left child, and
\item  each sequence of labels along a path from the root to any leaf is weakly increasing. 
\end{enumerate}   Denote by $\mathcal{B}_{M}$ the set of weakly increasing binary trees on $M$. See Fig.~\ref{18wibt} for all weakly increasing binary trees on  $M=\{1^2,2^2\}$.
\end{definition}
 
 \begin{figure}
	\begin{center}
	\includegraphics[width=8cm,height=6.5cm]{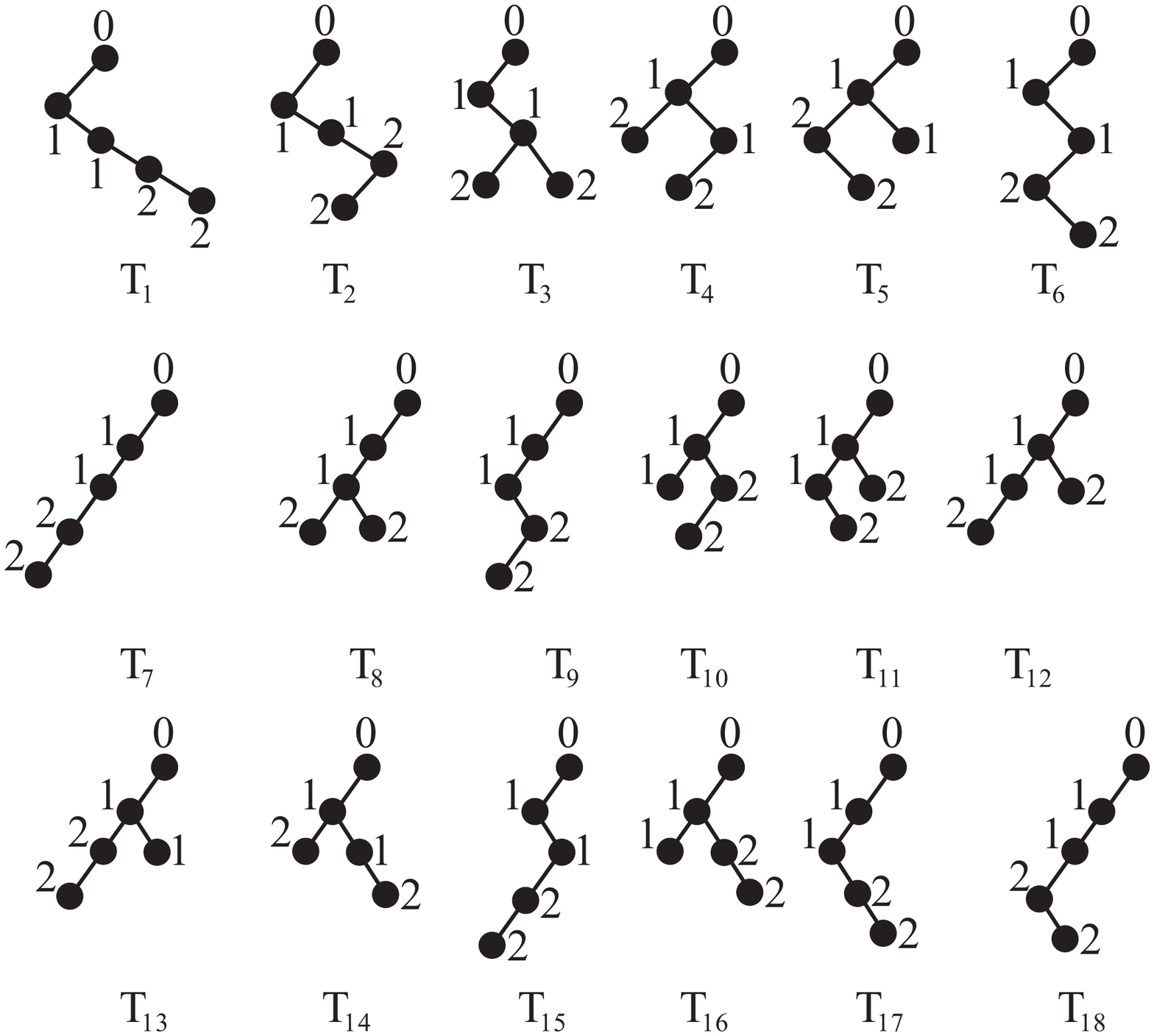}
	\end{center}
	\caption{\label{18wibt}All $18$ weakly increasing binary trees on $\{1^2,2^2\}$.}
	\end{figure}
 
 There is a natural bijection $\rho: \T_M\rightarrow \B_M$ that transforms a tree $T\in\T_M$ to the binary tree $\rho(T)$ by requiring 
 \begin{enumerate}
 \item if the node $y$ is the leftmost child of its parent $x$ in $T$, then  $y$ becomes the left child of $x$ in $\rho(T)$;
 \item if the node $y$ is the first brother to the right of $x$ in $T$, then  $y$ becomes the right child of $x$ in $\rho(T)$.
 \end{enumerate}
 Under this bijection, all trees in Fig.~\ref{18wit} are in one-to-one correspondence with the trees with the same names in Fig.~\ref{18wibt}. 
 
 In order to translate the concerned statistics for weakly increasing trees to weakly increasing binary trees, we need to introduce some terminologies. Fix a  tree $B\in \mathcal{B}_{M}$. For any node $u$, suppose that the node $v$ is the parent of $u$ in $B$. If $u$ is the left child of $v$, then the edge $uv$ connecting $u$ to $v$ is called  a {\em left edge}; otherwise, we say that $uv$ is a {\em right edge}. Note that there exists a unique path $v_0v_1\ldots v_s$ from the root $0$ to $u$ in $B$, where $v_0=0$ and $v_s=u$.  The node $u$ is in {\em left-level} $k$ if the path contains $k$ left edges. Let $j$ be the (unique) index in the path such that $v_jv_{j+1}$ is a left edge and $v_{h}v_{h+1}$ (if any) is a right edge for any $j+1\leq h\leq s-1$. We say that $v_s=u$ is a {\em right grandson} of $v_j$, and $v_j$ is the {\em ancestor} of $u$. Thus, the right-degree of a node in $B$ is the number of its right grandsons. Let us introduce the following six tree statistics:
\begin{itemize}
\item The number of nodes of {\bf r}ight-{\bf deg}ree $q$ in $B$, denoted by $\rdeg_q(B)$.
\item The number of nodes of {\bf r}ight-degree $q$ in {\bf o}dd {\bf l}eft-levels of $B$, denoted by $\rol_q(B)$.
\item The number of nodes in {\bf e}ven {\bf l}eft-{\bf l}evels of $B$, denoted by $\ell(B)$.
\item The number of {\bf o}dd {\bf r}ight-{\bf d}egree nodes in $B$, denoted by $\ord(B)$.
\item The number of {\bf e}ven {\bf r}ight-degree nodes in {\bf o}dd (resp.~{\bf e}ven) {\bf l}eft-levels of $B$, denoted by $\oler(B)$ (resp.~$\eler(B)$).
\end{itemize}
 Since the degree (resp.~level) of a node $u$ in $T$ equals the right-degree (resp.~left-level) of $u$ in $\rho(T)$, we have the following result. 
 
 \begin{lemma} \label{tree-binary}
 For any multiset $M$, 
$$
(\deg_q,\od_q,\el,\o,\oe,\ee)(T)=(\rdeg_q,\rol_q,\ell,\ord,\oler,\eler)(\rho(T))\text{ for each $T\in\T_M$}.
$$
\end{lemma}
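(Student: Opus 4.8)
The plan is to prove Lemma~\ref{tree-binary} by verifying that the bijection $\rho\colon\T_M\to\B_M$ translates each of the six statistics on the left into its counterpart on the right, and the crux is simply to establish the two basic correspondences asserted just before the statement: that for each node $u$, the degree of $u$ in $T$ equals the right-degree of $u$ in $\rho(T)$, and the level of $u$ in $T$ equals the left-level of $u$ in $\rho(T)$. Once these two node-level identities are in hand, every statistic in the lemma follows by summing over nodes, since each of the six statistics is defined by counting nodes subject to conditions that refer only to degree/right-degree and level/left-level (together with their parities), all of which are preserved node-by-node.

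First I would recall how $\rho$ is built: the leftmost child becomes the left child, and the first right-sibling becomes the right child. From this rule it follows that the children $y_1,\ldots,y_k$ of a node $x$ in $T$ (from left to right) form, in $\rho(T)$, the chain $y_1$ (left child of $x$), $y_2$ (right child of $y_1$), \ldots, $y_k$ (right child of $y_{k-1}$). Consequently the set of children of $x$ in $T$ is exactly the set of \emph{right grandsons} of $x$ in $\rho(T)$, using the terminology introduced before the lemma. This gives $\deg_q(T)$-type information at $u=x$: the number of children of $x$ equals its right-degree, so $\deg(x)=\rdeg(x)$ for every node. For the level statistic, I would trace the unique root-to-$u$ path in $\rho(T)$ and observe that moving from a node to its leftmost child in $T$ (i.e., descending one level in $T$) corresponds to traversing a left edge in $\rho(T)$, whereas moving to a right sibling stays at the same level in $T$ and corresponds to a right edge; hence the number of left edges on the root-to-$u$ path in $\rho(T)$, which is by definition the left-level of $u$, equals the level of $u$ in $T$.

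With the two node-level identities $\deg(u)=\rdeg(u)$ and $\operatorname{level}(u)=\operatorname{left-level}(u)$ established, I would conclude each equality in the lemma term by term. The identities $\deg_q=\rdeg_q$ and $\el=\ell$ are immediate. For $\od_q=\rol_q$ one combines both identities: a node has degree $q$ on an odd level in $T$ iff it has right-degree $q$ on an odd left-level in $\rho(T)$. The statistics $\o=\ord$ (odd degree versus odd right-degree), $\oe=\oler$ and $\ee=\eler$ (even degree on odd/even level versus even right-degree on odd/even left-level) all follow in the same way by pairing the parity of the degree with the parity of the level through $\rho$.

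The main obstacle, such as it is, is purely bookkeeping: one must argue carefully that the chain of right edges produced by a sequence of siblings in $T$ does not inflate the left-level, i.e., that right edges contribute nothing to the left-level count while faithfully recording sibling order, and dually that left edges record exactly the parent-to-leftmost-child steps. I expect this to require a short induction on the distance of $u$ from the root (or equivalently on the construction of the root-to-$u$ path), confirming that at each right-sibling step the level in $T$ and the left-level in $\rho(T)$ both remain unchanged, while at each leftmost-child step both increase by one. No computation beyond this structural induction is needed, so the proof is short; the lemma is essentially a restatement of the defining properties of $\rho$ in the vocabulary of right-degrees and left-levels.
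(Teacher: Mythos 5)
Your proposal is correct and follows essentially the same route as the paper, which proves the lemma by the single observation that the degree (resp.~level) of a node $u$ in $T$ equals the right-degree (resp.~left-level) of $u$ in $\rho(T)$, from which all six equalities follow by counting nodes. Your write-up simply fleshes out the justification of these two node-level identities (children becoming chains of right edges, hence right grandsons; leftmost-child steps being the only contributors of left edges), which the paper leaves implicit.
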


We shall call a node $u$ in a binary tree $B\in\B_M$ {\em active} if $u$ is active in $\rho^{-1}(B)$. Alternatively, active nodes in binary trees have the following direct description. 
\begin{figure}
\begin{center}
\includegraphics[width=8.5cm,height=5cm]{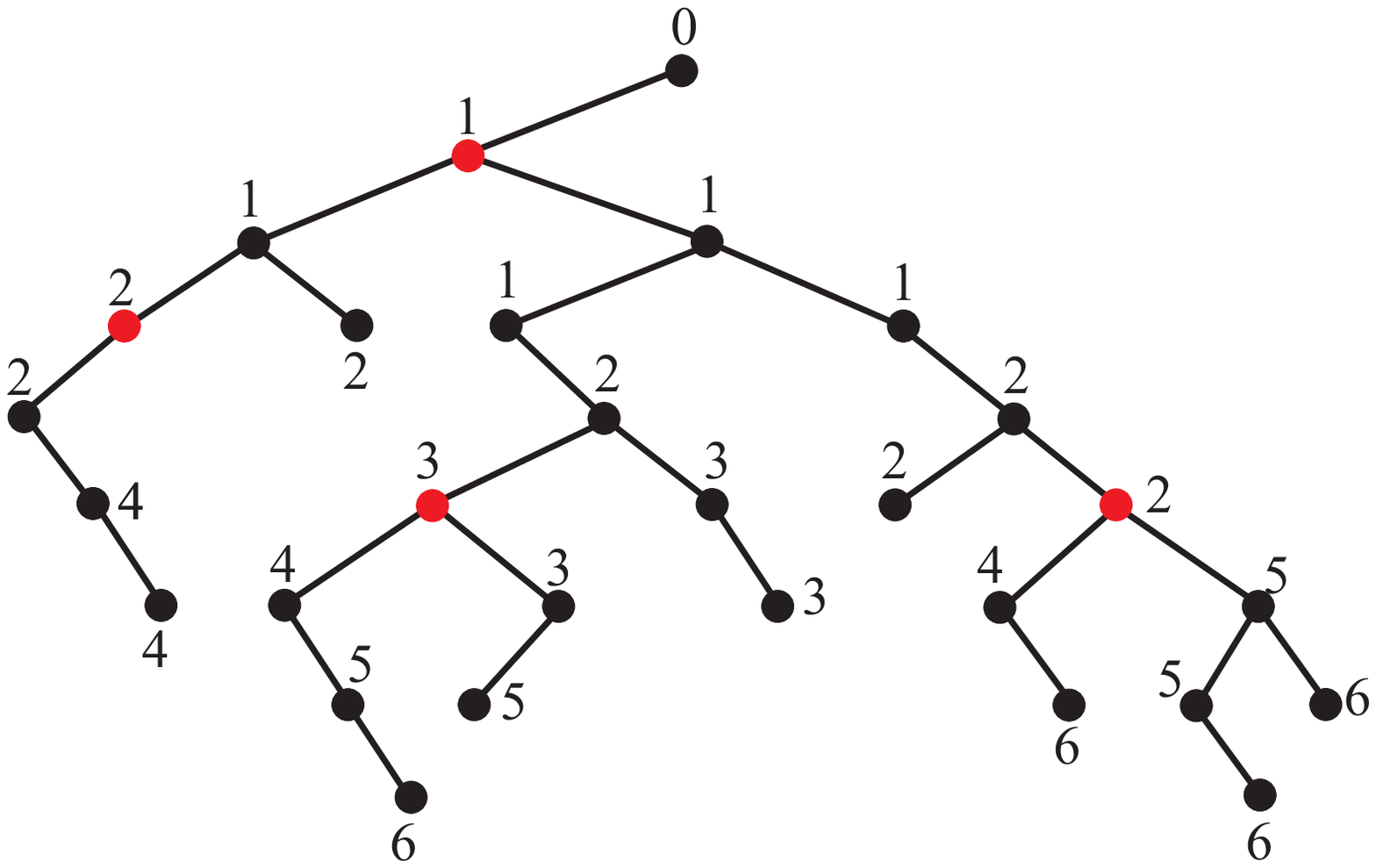}
\end{center}
\caption{\label{fig:EXA}A weakly increasing binary tree and its active nodes.}
\end{figure}
\begin{definition}[Active nodes in binary trees]
 Let $B\in\mathcal{T}_{M}$ and let $u$ be a node of $B$.
 Suppose that  $y$ (possibly empty) is the right child of $u$.  The node $u$ is {\bf\em active} if it satisfies the following three conditions:
\begin{enumerate}[(a)]
\item  the  left-level  of $u$ is odd;
\item the unique path from  $u$ to  the ancestor of $u$  has odd number of edges;
\item either (i) $y$ is not empty and the right-degrees of $u$ and $y$ have the same parity, or (ii) $y$ is empty  and the right-degree of $u$  is odd.
\end{enumerate}
 Furthermore, $u$ is said to be {\bf\em active odd} or {\bf\em active even} according to the parity of the right-degree of $u$. Let $\act(T)$ (resp.~$\eact(T)$, $\oact(T)$) be the number of active (resp.~active even, active odd) nodes of $T$. 
\end{definition}
\begin{example} Consider the weakly increasing binary tree in Fig.~\ref{fig:EXA}.  All the active nodes are drawn in red. We have $\act(B)=4$ and $\oact(B)=\eact(B)=2$. 
\end{example}

\begin{figure}
\begin{center}
\includegraphics[width=10cm,height=5cm]{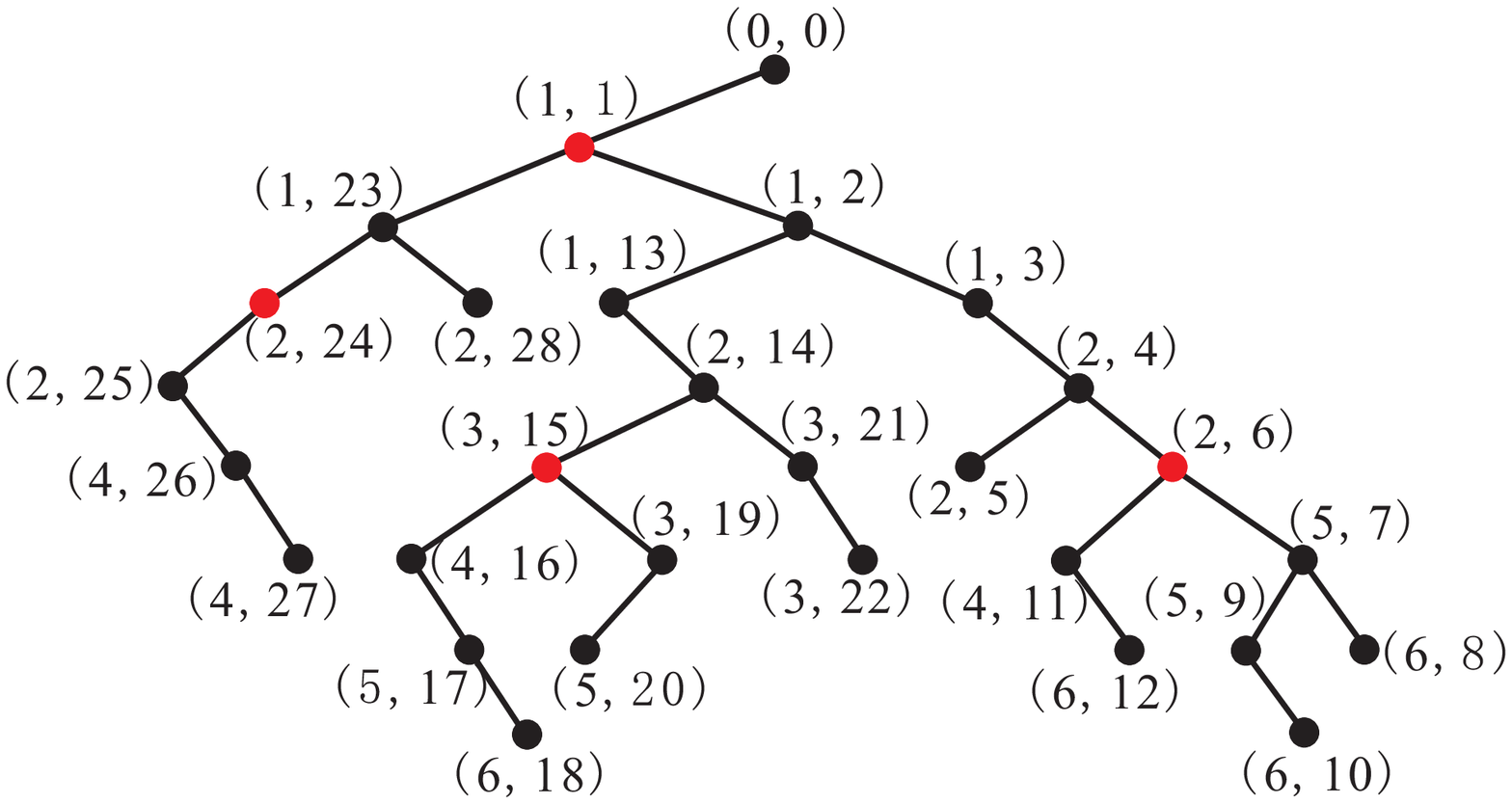}
\end{center}
\caption{\label{fig:SEARCH} An example of the modified preorder.}
\end{figure}
In order to define a group action on $\mathcal{B}_M$, we need a specified order, which is a {\em variation of  the depth-first order (or preorder)}, for the nodes of a weakly increasing binary tree.  
This specified order is called {\em modified preorder} and will be defined according to the following algorithm (here $v_i$ denotes the label of the node which is visited at the time $i$):  
\begin{itemize}
\item Initialization: Start from the root of a given  binary tree $B\in \mathcal{B}_M$, i.e.,  set  $v_0=0$.
\item  At the time $i\geq 1$, suppose that the sequence $v_0,v_1,\ldots, v_{i-1}$ of nodes  are visited. Let $k$ be the largest index such that at least one child of the node $v_k$ has not been visited.
If $v_k$ has exactly one child $u$ that has not been visited, then we set $v_i=u$. Otherwise, the left child $x$ and the right child $y$ of $v_k$ have not been visited. We need to choose $x$ or $y$ for the next visit according to the following three cases: 
\begin{enumerate}
  \item  $v_k$ is active. Then set 
  $$
  v_i=
  \begin{cases}
  y,  \quad&\text{if the right-degree of $v_k$ is even;}\\
  x, \quad& \text{otherwise.}
  \end{cases}
  $$
 \item  $v_k$ is not active, but its parent $w$ is active.  Then set 
  $$
  v_i=
  \begin{cases}
  y,  \quad&\text{if $v_k$ is the right child of $w$;}\\
  x, \quad& \text{otherwise.}
  \end{cases}
  $$
\item neither $v_k$ nor its parent is  active.  Then set  $v_i=x$.
\end{enumerate}
\item Iterating step~2 above until $i=p$, we obtain the desired order $v_0,v_1,\ldots, v_{p}$ of nodes in $B$. 
\end{itemize}
Moreover, if  $u$  is the $i$-th node in the above modified preorder, we simply say that $u$ is the $i$-th node of the tree and can relabel this node with a pair  $(u,i)$.
For example, the modified preorder for the tree in Fig.~\ref{fig:EXA} is drawn  in Fig.~\ref{fig:SEARCH}.

Now we can define our fundamental transformation on weakly increasing binary trees. For $B\in\mathcal{B}_M$ and $i\in[p]$, suppose that the $i$-th node  of $B$ is  $u$.
Then we define a weakly increasing binary tree $\Lambda_i(B)$ as follows:
\begin{itemize}
\item If $u$ is not active, then $\Lambda_i(B)=B$.
\item  If $u$ is active, then suppose that the nodes $x$ and $y$ are the left and right child of $u$, $F$ and $A$  are the subtrees rooted at the left and right children of $x$, $D$ and $H$ are the subtrees rooted at the left and right children of $y$, respectively. Note that the nodes $x, y$ and the subtrees $F, A, D, H$ may be empty.  Delete the edges $ux$ and $uy$, remove the edges between $x$ (resp.~$y$) and its children. Attach $x$ to be the right child of $u$, $y$ to be the left child $u$, $F$ (resp.~$A$) to be the right (resp.~left) subtree of $x$,  $H$ (resp.~$D$) to be the left (resp.~right) subtree of $x$. Denote by $\Lambda_i(B)$ the resulting weakly increasing binary tree. Equivalently, $\Lambda_i(B)$ is obtained from $B$ by switching  the left and  right branches at the three nodes: $u$, $x$ and $y$. 
The construction of  $\Lambda_i(B)$ has a nice visualization as depicted in Fig.~\ref{fig:involution}, where $u$ is the left child of its parent.
\begin{figure}
\begin{center}
\includegraphics[width=8cm,height=4cm]{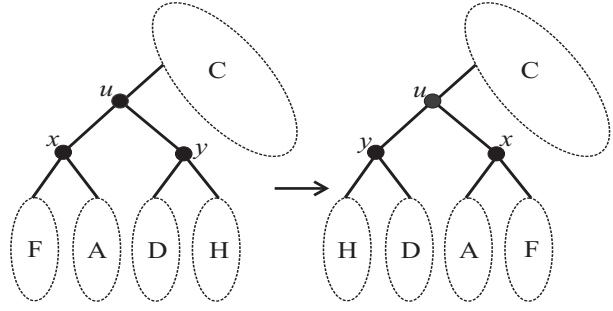}
\end{center}
\caption{\label{fig:involution}The transformation $B\mapsto\Lambda_i(B)$.}
\end{figure}
\end{itemize}

The transformation $\Lambda_i$ has the following properties. The first two can be verified from the construction of $\Lambda_i$ directly. 

\begin{lemma}\label{LEM:order} 
Let $B\in\mathcal{B}_M$ and $i\in [p]$. Then
 for any $j\in [p]$,
  $u$ is the $j$-th node of $B$  if and only if  $u$ is the $j$-th node of $\Lambda_i(B)$. In other words, the modified preorder is invariant under the transformation $\Lambda_i$.
\end{lemma}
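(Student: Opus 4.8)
The plan is to prove the sharper statement that the modified preorder visits the same physical node at every time step of $B$ and of $\Lambda_i(B)$. Writing $u$ for the $i$-th node and $T_u$ for the subtree it roots, the starting point is that $\Lambda_i$ rearranges only the nodes of $T_u$ and leaves the rest of $B$ fixed as an abstract binary tree. I would therefore split the traversal into three stages---the part before $u$ is first reached, the traversal of $T_u$, and the resumption after $T_u$ is exhausted---and show that each stage produces the same node sequence in the two trees. Since the nodes lying outside $T_u$ and their mutual adjacencies are literally unchanged, the only way a stage could diverge is if $\Lambda_i$ flips a statistic that the algorithm consults when choosing between two children.

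For the outer stages I would first pin down exactly which statistics move. Inspecting Fig.~\ref{fig:involution}, the only node outside $T_u$ whose right-degree changes is the ancestor $w$ of $u$, because swapping the children of $u$ replaces $u$'s right child and hence the tail of $w$'s right spine; all left-levels outside $T_u$ stay the same. The decisive observation is that the left-level of $w$ is even: this is forced by condition~(a) for $u$, since the left-level of $u$ equals that of $w$ plus one and is odd. Consequently $w$ is inactive, and any node that would consult $\rdeg(w)$ in a parity test (necessarily a node having $w$ as its right child) likewise has even left-level and is inactive. Thus no choice governing the outer stages reads a disturbed statistic, so these stages agree verbatim; in particular $u$ is reached at time $i$ in both trees and the post-$T_u$ continuation is identical.

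For the middle stage I would analyze the local configuration at $u$, its children $x$ and $y$, and the four hanging subtrees $F,A,D,H$ of Fig.~\ref{fig:involution}. Both $x$ and $y$ are inactive in each tree: the sibling $y$ fails condition~(b), its path to $w$ having even length, while $x$ fails~(a) in $B$ and~(b) in $\Lambda_i(B)$. Hence the algorithm descends into their subtrees through Case~2, which reads only the activeness of the parent $u$ and the side of the child. The crux is that the first child visited after $u$ is the same physical node in both trees: condition~(c) for $u$ gives $\rdeg(u)\equiv\rdeg(y)\pmod2$ in $B$, and since after the swap the right-degree of $u$ becomes $1+\rdeg(y)$, its parity flips in exactly the way needed to offset the interchange of the left and right children of $u$ in Case~1 (the degenerate cases where $x$ or $y$ is empty are immediate). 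Granting this, Case~2 routes the traversal through the blocks in the common order $u,\{y,H,D\},\{x,F,A\}$ or $u,\{x,F,A\},\{y,H,D\}$.

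Finally I would verify that each block $F,A,D,H$ is traversed internally in exactly the same way, i.e. that the active status of every node strictly inside it is preserved. Conditions~(a) and~(c) survive easily, since $\Lambda_i$ shifts the internal left-levels of $F,A,D,H$ by the even amounts $-2,0,0,+2$ and leaves the internal structure of each block untouched. Condition~(b) is the subtle point, and I expect it to be the main obstacle: a node on the right spine emanating from a block's root may acquire a different ancestor under $\Lambda_i$, so its ancestor-path parity must be tracked spine by spine. The resolution is that for the two blocks whose ancestor-path parity genuinely flips ($A$ and $D$) the affected spine nodes all have even left-level and are therefore already inactive by~(a), while for the remaining blocks ($F$ and $H$) the parity is preserved because the detour through $u$ lengthens the path by the even amount $k+2$, where $k$ is even by condition~(b) for $u$. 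Assembling the three stages then shows that $u$ is the $j$-th node of $B$ if and only if it is the $j$-th node of $\Lambda_i(B)$, which is the assertion of the lemma.
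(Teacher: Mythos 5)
Your three-stage plan is the right way to carry out the ``direct verification'' that the paper merely asserts (the paper gives no written proof of Lemma~\ref{LEM:order}), and the two delicate parts are handled correctly: at $u$, condition (c) forces the parity of the right-degree of $u$ to flip under $\Lambda_i$, which exactly offsets the interchange of its children in Case~1, and the inactivity of $x$ and $y$ in both trees makes Case~2 route the traversal through $F,A,D,H$ in the same physical order; inside the blocks only condition (b) is at risk, the right spines of $A$ and $D$ are immune because they sit at even left-level, and those of $F$ and $H$ because their ancestor-path length changes by an even amount. (Two small slips there: with the paper's convention, condition (b) says the path from $w$ to $u$ has \emph{odd} length $k$, so the detour adds $k+1$ edges, not ``$k+2$ with $k$ even'' --- your count is evidently of right edges only; and you should state explicitly that $u$ remains \emph{active} in $\Lambda_i(B)$, which follows from the same parity identity, since otherwise Case~1 does not even apply there.)

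The genuine gap is in the outer stage. You classify the disturbed data as ``only $\rdeg(w)$ among nodes outside $T_u$'' and then check the nodes that consult $\rdeg(w)$; but an outside node can also consult a disturbed statistic of a node \emph{inside} $T_u$. Concretely, if $u$ is the right child of its parent $v$ (which happens exactly when $k\geq 3$), then condition (c) for $v$ reads $\rdeg(u)$, whose parity flips under $\Lambda_i$ by your own crux computation, so the truth value of (c) at $v$ genuinely differs between $B$ and $\Lambda_i(B)$. Moreover $v$ sits at \emph{odd} left-level (one more than $w$), so your ``even left-level, hence inactive'' argument does not cover it, and the assertion ``no choice governing the outer stages reads a disturbed statistic'' is unjustified as stated: the choice made at $v$ between its two children (one of which is $u$), and any Case~2 choice at a child of $v$, consult the activity of $v$. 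The lemma survives by the same device you used for $y$: $v$ is the $(k-1)$-st right grandson of $w$, so its path to its ancestor $w$ has even length $k-1$, hence $v$ fails condition (b) and is inactive in both trees, and those choices agree after all. Adding this case --- and noting that no other outside node can have $u$, $x$ or $y$ as its right child, so $v$ is the only node you missed --- closes the argument.
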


\begin{lemma}\label{LEM:bi-even->odd}
 Let $B\in\mathcal{B}_M$ and $i\in[p]$. If $u$ is the $i$-th node of $B$, then $u$ is an active even (resp.~odd) node of $B$ if and only if $u$ is an active odd (resp.~even) node of $\Lambda_i(B)$. Moreover, $\Lambda_i$ is an involution on $\mathcal{B}_M$.
\end{lemma}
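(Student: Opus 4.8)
The plan is to verify, straight from the construction of $\Lambda_i$, that each of the three defining conditions of an active node transforms as needed, and then to establish the involution property by tracking the rearranged subtrees; throughout I will invoke Lemma~\ref{LEM:order}, so that $u$ remains the $i$-th node of $\Lambda_i(B)$. As a bookkeeping device, for a possibly empty subtree $S$ let $r(S)$ denote the number of nodes on its right spine (the path that starts at the root of $S$ and repeatedly descends to right children), with $r(\emptyset)=0$. Because the right grandsons of a node $v$ are precisely the nodes on the right spine issuing from the left child of $v$, the right-degree of $v$ equals $r$ of the left subtree of $v$; in particular, if the left child of $v$ has right subtree $R$, then this right-degree is $1+r(R)$.

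Conditions (a) and (b) are immediate: the left-level of $u$ and the number of edges from $u$ to its ancestor are determined solely by the root-to-$u$ path, which $\Lambda_i$ leaves untouched since it only rearranges the subtrees at and below $u$. Hence (a) and (b) hold for $u$ in $\Lambda_i(B)$ exactly when they hold in $B$. The crux is condition (c) paired with the parity of the right-degree of $u$. In the notation of the construction, the right-degree of $u$ in $B$ is $1+r(A)$ (and is $0$ when $x$ is empty), while its right child $y$ has right-degree $r(D)$; after applying $\Lambda_i$ the left child of $u$ is $y$, so the right-degree of $u$ in $\Lambda_i(B)$ is $1+r(D)$ (and is $0$ when $y$ is empty). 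When $y$ is present, condition (c) says precisely that the right-degree of $u$ and $r(D)$ have the same parity, whereas the right-degree of $u$ in $\Lambda_i(B)$ equals $1+r(D)$; when $y$ is absent, condition (c) forces the right-degree of $u$ to be odd, while it becomes $0$ in $\Lambda_i(B)$. In both cases the parity of the right-degree of $u$ flips, and a symmetric computation, with $x$ now playing the role of the right child of $u$, shows that condition (c) holds anew for $u$ in $\Lambda_i(B)$. Together with the preservation of (a) and (b), this yields that $u$ is active even in $B$ if and only if it is active odd in $\Lambda_i(B)$, and vice versa.

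I expect the boundary cases in which $x$ or $y$ is empty to be the main technical nuisance: one must confirm that the degenerate value $r(\emptyset)=0$ is produced correctly, that an active node cannot have both $x$ and $y$ empty, and that $\Lambda_i(B)$ is again a genuine weakly increasing binary tree. The last point is easy, since every subtree among $F,A,D,H$ stays attached below the same node ($x$ or $y$), so the inequality between a parent label and a child label along each root-to-leaf path is preserved.

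Finally, for the involution property: if $u$ is inactive then $\Lambda_i(B)=B$ and nothing is required, while if $u$ is active then $u$ is again active in $\Lambda_i(B)$ by the above, so a second application of $\Lambda_i$ acts once more at $u$. Recording the six pieces of local data (the left child, the right child, and the left and right subtrees below each of them), the first application sends $(x,y,F,A,D,H)$ to $(y,x,H,D,A,F)$, and applying the same rule again returns it to $(x,y,F,A,D,H)$; hence $\Lambda_i(\Lambda_i(B))=B$.
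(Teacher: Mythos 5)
Your proof is correct and follows exactly the route the paper intends: the paper gives no written argument for this lemma, asserting only that it ``can be verified from the construction of $\Lambda_i$ directly,'' and your verification via the right-spine counts $r(\cdot)$ is precisely that direct check, with the parity bookkeeping (right-degree of $u$ passing from $1+r(A)$ to $1+r(D)$, the empty-child degenerate cases, and the fact that an active node cannot be childless) all handled accurately. The involution argument is likewise sound, since Lemma~\ref{LEM:order} guarantees the second application acts again at $u$, which is still active by the first part, and the local-data swap $(x,y,F,A,D,H)\mapsto(y,x,H,D,A,F)$ is its own inverse.
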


\begin{lemma}\label{LEM:bi-eler-action} Let $B\in\mathcal{B}_M$   and  $i\in [p]$.
Then $\mathsf{eler}(B)=\mathsf{eler}(\Lambda_i(B))$.
\end{lemma}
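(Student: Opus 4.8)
The plan is to identify precisely which nodes can change their $\eler$-status under $\Lambda_i$ and to check that these changes cancel in pairs. If the $i$-th node $u$ is inactive then $\Lambda_i(B)=B$ and there is nothing to prove, so assume $u$ is active and write $L$ for its left-level, which is odd by condition~(a). I keep the notation $x,y,F,A,D,H$ from the construction of $\Lambda_i$, and for a (possibly empty) subtree $S$ I let $c(S)$ be the number of nodes on the maximal right-path issuing from the root of $S$, so that $c(S)=0$ when $S$ is empty and so that the right-degree of a node equals the value $c$ of the subtree rooted at its left child. Since $F,A,D,H$ are transplanted rigidly, every node interior to them keeps its right-degree; moreover a direct check shows the roots of $A$ and $D$ remain at left-level $L+1$ while the roots of $F$ and $H$ only move between left-levels $L$ and $L+2$, so all four roots keep the parity of their left-level. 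Hence no node of $F\cup A\cup D\cup H$ changes its $\eler$-status, and neither does $u$, which stays at the odd left-level $L$. The one subtlety is that switching the branches at $u$ reroutes the right-spine passing \emph{through} $u$, so the \emph{ancestor} $v_j$ of $u$ may change its right-degree; thus $\eler(B)-\eler(\Lambda_i(B))$ is governed entirely by $x,y,v_j$ (together with $u$, which contributes nothing).

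First I would treat the generic case where $x$ and $y$ are both present. The left-level of $x$ is $L+1$ before and $L$ after, that of $y$ is $L$ before and $L+1$ after, and $v_j$ sits at the even left-level $L-1$ throughout; thus among $\{x,y,v_j\}$ the even-left-level nodes are $x,v_j$ before and $y,v_j$ after. Reading off the relevant right-spines gives that the right-degree of $x$ before equals $c(F)$ and the right-degree of $y$ after equals $c(H)$. For $v_j$, if $s-j$ denotes the number of edges on the path from $u$ up to $v_j$, its right-degree is $(s-j)+1+c(H)$ before and $(s-j)+1+c(F)$ after, because below $u$ the spine runs through $y$ into $H$ beforehand and through $x$ into $F$ afterward. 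Now condition~(b) asserts that $s-j$ is odd, whence the right-degree of $v_j$ is $\equiv c(H)$ before and $\equiv c(F)$ after, modulo $2$. Therefore the total $\eler$-contribution of $\{x,y,v_j\}$ equals $\chi(c(F)\text{ even})+\chi(c(H)\text{ even})$ both before (from $x$ and $v_j$) and after (from $v_j$ and $y$), and these coincide.

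It remains to dispose of the degenerate cases in which $x$ or $y$ is empty; they cannot both be empty, for then $u$ would violate condition~(c). If $y$ is empty, condition~(c)(ii) makes the right-degree of $u$ odd, the right-spine of $v_j$ now stops at $u$ so that the right-degree of $v_j$ equals $s-j$ (odd, hence uncounted), and the lone surviving term $\chi(c(F)\text{ even})$ simply moves from $x$ before to $v_j$ after. If $x$ is empty, the right-degree of $u$ is $0$, so condition~(c)(i) forces $c(D)$ even, and by the mirror-image computation the term $\chi(c(H)\text{ even})$ moves from $v_j$ before to $y$ after. In either case the before- and after-totals agree, completing the verification that $\eler(B)=\eler(\Lambda_i(B))$.

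The step I expect to be the crux is the realization that $\Lambda_i$ changes the right-degree of the ancestor $v_j$ of $u$, even though $v_j$ lies above the locally modified region: it is tempting to believe that only $u,x,y$ and the four transplanted subtrees can matter. Once this is noticed, condition~(b) (the odd path length from $u$ to its ancestor) is exactly what forces $v_j$'s right-degree to carry the parity complementary to that of $x$ and $y$, so that the potential change at $\{x,y\}$ is precisely offset by the change at $v_j$; everything else reduces to routine parity bookkeeping.
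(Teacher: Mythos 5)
Your proof is correct and follows essentially the same route as the paper's: both arguments localize all possible changes to the four nodes $u$, its ancestor, $x$ and $y$ (everything inside $F,A,D,H$ and outside the modified region being parity-invariant), use condition (b) to fix the parity of the ancestor's right-degree, and match contributions ($x$ before with the ancestor after, the ancestor before with $y$ after) in the same three cases according to which of $x,y$ is empty. Your explicit right-spine bookkeeping via $c(\cdot)$ just makes quantitative what the paper states as parity correspondences.
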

\begin{proof} 
If the $i$-th node $u$ in $B$ is not active, then $\Lambda_i(B)=B$ and so $\eler(B)=\eler(\Lambda_i(B))$. Suppose that $u$ is  active in $B$ with ancestor $w$, left child $x$ and   right child $y$ (possibly empty). Since $\Lambda_i$ preserves the parity of left-level and  right-degree of all other nodes except for the four ones $u$, $w$, $x$ and $y$, we only need to focus on the properties of these four nodes.
As $x$ or $y$ may be empty, we distinguish the following three cases:
\begin{enumerate}
\item Neither $x$ nor $y$ is empty in $B$. As $u$ is active, $u$ and $y$ are in odd left-level of  $B$, and  also $u$ and $x$ are in odd left-level of   $\Lambda_i(B)$. Moreover, the left-level and  right-degree of  $w$ (resp.~$x$) have the same parity as those of node $y$ (resp.~$w$) in $\Lambda_i(B)$. This implies that $\mathsf{eler}(B)=\mathsf{eler}(\Lambda_i(B))$.

\item Only  $x$ is not empty in $B$. As $u$ is active, $u$ is in odd left-level of  $B$, and  also $u$ and $x$ are in odd left-level of  $\Lambda_i(B)$.  Moreover, $w$ has right-degree odd, and the left-level and  right-degree of the node  $x$  in $B$ have the same parity as those of node $w$  in $\Lambda_i(B)$. Hence, $\mathsf{eler}(B)=\mathsf{eler}(\Lambda_i(B))$.

\item Only  $y$ is not empty in $B$.   As $u$ is active, $u$ and $y$ are in odd left-level of  $B$, and  also $u$ is in odd left-level of  $\Lambda_i(B)$ and $w$ has right-degree one in $\Lambda_i(B)$.  Moreover,  the left-level and  right-degree of the node  $w$  in $B$ have the same parity as those of node $y$   in $\Lambda_i(B)$. Hence, $\mathsf{eler}(B)=\mathsf{eler}(\Lambda_i(B))$.
\end{enumerate}
In either case, we have $\mathsf{eler}(B)=\mathsf{eler}(\Lambda_i(B))$.
\end{proof}

\begin{definition}\label{DEF:active-odd-even}
Let $u$ be an active node in a binary tree $B$.
If $u$ has the right child $y$ and $u$ is active even (resp.~odd), then the nodes $u$ and $y$ are said to be {\bf\em dynamic  even (resp.~odd)  nodes}.
If $u$ has only left child,  then the nodes $u$ and its  ancestor  are said to be  {\bf\em dynamic odd nodes}.
Denote by $\dme(B)$ (resp.~$\dmo(B)$) the number of dynamic  even (resp.~odd)  nodes in $B$. Clearly, 
\begin{equation}\label{dem:eact}
\dme(B)=2\cdot\eact(B)\quad \text{and} \quad \dmo(B)=2\cdot\oact(B).
\end{equation}
\end{definition}

The following property of $\Lambda_i$ is clear from its construction, which  justifies the name dynamic even or odd nodes. 
\begin{lemma}\label{LEM:dym} Let $B\in\mathcal{B}_M$ and $i\in[p]$. Suppose that  the $i$-th node $u$ of $B$ is active and the ancestor of $u$ is $w$. 
\begin{enumerate}[($a$)]
\item  If $u$ has the left child $x$ and  the right child $y$ in $B$, then  $u$ and  $y$   are dynamic even (resp.~odd) in $B$ if and only if  $u$ and  $x$  are dynamic odd (resp.~even) in $\Lambda_i(B)$.
\item  If  $u$ has only the left child $x$ in $B$, then  $u$ and its ancestor $w$ are dynamic odd in $B$ if and only if  $u$ and  $x$  are dynamic even in $\Lambda_i(B)$.
\item  If $u$ has only the right child $y$ in $B$, then  $u$ and  $y$ are dynamic even in $B$ if and only if  $u$ and its ancestor $w$  are dynamic odd in $\Lambda_i(B)$.
 \end{enumerate}
  Moreover, the parity of left-levels and  right-degrees of all other nodes other than $u$, $w$, $x$ and $y$ remain unchanged under $\Lambda_i$.  
\end{lemma}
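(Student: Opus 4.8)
The plan is to read the whole statement directly off the construction of $\Lambda_i$, using Lemma~\ref{LEM:bi-even->odd} to control the parity of the right-degree of $u$ and Definition~\ref{DEF:active-odd-even} to name the dynamic partners. The one structural fact that drives everything is that $\Lambda_i$ \emph{interchanges the left and right children of $u$}: the left child $x$ of $u$ in $B$ (possibly empty) becomes the right child of $u$ in $\Lambda_i(B)$, while the right child $y$ of $u$ in $B$ (possibly empty) becomes its left child. Since $\Lambda_i$ modifies $B$ only inside the subtree rooted at $u$, the path from the root down to $u$ is untouched; hence $u$ keeps its left-level, and, crucially, the ancestor $w$ of $u$ is literally the same node in $B$ and in $\Lambda_i(B)$. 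Combined with Lemma~\ref{LEM:bi-even->odd}, which tells us that $u$ stays active while exchanging the active-even and active-odd types, this already forces the new dynamic partner of $u$ via Definition~\ref{DEF:active-odd-even}.

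First I would split into the three cases according to which children $u$ possesses after the swap. In case~($a$) both $x$ and $y$ are nonempty, so $u$ still has two children; its right child in $\Lambda_i(B)$ is now $x$, and as $u$ flips from active even to active odd (or conversely), the pair $(u,y)$ that is dynamic even (resp.\ odd) in $B$ is replaced by the pair $(u,x)$ that is dynamic odd (resp.\ even) in $\Lambda_i(B)$. In case~($b$) the node $y$ is empty, so condition (c)(ii) in the definition of active nodes forces $u$ to be active odd in $B$, whence $(u,w)$ is dynamic odd; after the swap $u$ has $x$ as its \emph{only} child, now a right child, and $u$ is active even, so $(u,x)$ becomes dynamic even. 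Case~($c$) is the mirror image: $x$ is empty, $u$ has right-degree $0$ and is therefore active even with $(u,y)$ dynamic even, while after the swap $u$ has $y$ as its unique left child, becomes active odd, and $(u,w)$ is dynamic odd. In each case the assertion falls out by comparing Definition~\ref{DEF:active-odd-even} against the swapped configuration.

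For the closing ``moreover'' clause I would verify that $\Lambda_i$ is a purely local move. A one-line computation of the left-levels of $x$, $y$ and of the roots of $F,A,D,H$ in terms of the (odd) left-level of $u$ shows that, in passing from $B$ to $\Lambda_i(B)$, the root of each of the four transplanted subtrees has its left-level changed by $0$ or $\pm2$; as these subtrees move as intact blocks, every node inside them retains both its left-level parity and its right-degree. Consequently the only nodes whose left-level or right-degree parity can be disturbed are $u$ (its left child switches from $x$ to $y$), the two children $x,y$ (which trade the left/right roles at $u$ and have their own left subtrees swapped), and the ancestor $w$ (whose chain of right grandsons, running through $u$, is lengthened or shortened) --- exactly the four exceptional nodes named in the statement.

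The main obstacle is bookkeeping rather than difficulty: one must be scrupulous about the empty-subtree conventions in cases~($b$) and~($c$), where the partner of $u$ jumps between its right child and its ancestor $w$, and must confirm that the chain of right grandsons issuing from $w$ is correctly lengthened or shortened so that the right-degree parity of $w$ behaves as the statement demands. I would keep this under control by tabulating, once and for all, the left-levels and right-degrees of $x$, $y$, $w$ and of the roots of $F,A,D,H$ before and after $\Lambda_i$, and then reading off every parity from that single table.
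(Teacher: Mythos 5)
Your proposal is correct and takes essentially the same route as the paper: the paper states this lemma with no written proof at all (``clear from its construction''), and your case analysis --- using Lemma~\ref{LEM:bi-even->odd} for the active-even/active-odd flip, the definition of dynamic nodes to identify the partners in cases ($a$)--($c$), and the locality argument with the $0,\pm2$ left-level shifts of the transplanted subtrees $F,A,D,H$ for the ``moreover'' clause --- is exactly the verification the paper leaves implicit. All parity computations in your three cases check out against the construction of $\Lambda_i$.
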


For a binary tree $B\in\B_M$, let $\ndoler(B)$ be the number of {\bf e}ven  {\bf r}ight-degree nodes in {\bf o}dd {\bf l}eft-levels of $B$ that are {\bf n}on-{\bf d}ynamic even and let $\ndord(B)$ be the number of {\bf o}dd {\bf r}ight-{\bf d}egree nodes of $B$ that are {\bf n}on-{\bf d}ynamic odd. For example, in the weakly increasing  binary tree in Fig.~\ref{fig:SEARCH}, the two non-dynamic even right-degree nodes are $(1,3)$ and $(6,8)$ and the two non-dynamic odd right-degree nodes are $(0,0)$ and $(2,4)$. As dynamic even nodes are in odd left-levels, we have the relationships 
\begin{equation}\label{rel:ndoler}
\dme(B)+\ndoler(B)=\oler(B)\quad\text{and}\quad \dmo(B)+\ndord(B)=\ord(B).
\end{equation}
Moreover, we have the following crucial equality. 
\begin{lemma} \label{lem:main}
For any $B\in\B_M$, $\ndoler(B)=\ndord(B)$.
\end{lemma}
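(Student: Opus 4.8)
The plan is to peel the definitions back to a single clean identity and then prove that identity by induction on the number of nodes. First I would combine the relations \eqref{rel:ndoler} with \eqref{dem:eact} to write $\ndoler(B)=\oler(B)-2\eact(B)$ and $\ndord(B)=\ord(B)-2\oact(B)$, so that the asserted equality $\ndoler(B)=\ndord(B)$ is equivalent to $\oler(B)-\ord(B)=2(\eact(B)-\oact(B))$. Transporting this across the bijection $\rho$ via Lemma~\ref{tree-binary}, which matches $\oler,\ord$ with $\oe,\o$ and leaves the active statistics unchanged, it suffices to prove for every weakly increasing tree $T=\rho^{-1}(B)$ the identity
\[
\oe(T)-\o(T)=2\bigl(\eact(T)-\oact(T)\bigr).
\]

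I would prove this last identity by induction on $p$, the number of non-root nodes, simultaneously over all multisets $M$; the cases $p\le 1$ are handled as immediate base cases. For the inductive step, let $c$ be the final node in the modified preorder of $B$. Then $c$ is a leaf of $B$, so in $T$ it is a childless node that is the rightmost among its siblings, say the $k$-th child of its parent $v$. Deleting $c$ yields a weakly increasing tree $T'$ on a multiset with one fewer element, to which the induction hypothesis applies, and the whole argument reduces to verifying
\[
(\oe-\o)(T)-(\oe-\o)(T')=2\bigl[(\eact-\oact)(T)-(\eact-\oact)(T')\bigr].
\]

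The decisive observation is that deleting $c$ disturbs only a bounded neighbourhood: besides $c$ itself, the degree of its parent $v$ drops by one so its parity flips, the node $c_{k-1}$ (if present) loses its right sibling, and the left sibling $v^{-}$ of $v$ (if present) sees the degree parity of its right sibling change. Exactly these nodes can change their membership in $\oe$ or $\o$, or their active/inactive and even/odd status; every other node keeps its level parity, degree parity, and active status, so its contribution cancels between $T$ and $T'$. The claimed difference identity is then a finite case analysis over the parities of $\deg(c)$, $\deg(v)$, $\deg(c_{k-1})$, the level of $v$, and the positions $k,k-1$, together with the presence or absence of $c_{k-1}$ and $v^{-}$.

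The hard part will be precisely this bookkeeping at the cut. Condition~(c) of the active-node definition couples a node's degree parity to that of its immediate right sibling, so a single deletion simultaneously toggles condition~(c) for $c_{k-1}$, whose right sibling vanishes, and for $v^{-}$, whose right sibling $v$ changes degree parity, while also flipping the parity that governs $v$'s own membership and activity. Checking that these coupled toggles always combine to produce exactly the factor $2$ on the right, with no residual term, is the heart of the proof and must be carried out in each parity case. As an independent consistency check, Lemma~\ref{LEM:dym} shows that each generator $\Lambda_i$ of the triangle group action alters the dynamic even/odd status only of the four nodes $u,w,x,y$ and preserves every parity elsewhere; hence both $\ndoler$ and $\ndord$ are invariant under the entire $\Z_2^{p}$-action, in agreement with the identity proved above.
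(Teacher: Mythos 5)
Your reductions are all correct, and they are the strongest part of the proposal: combining \eqref{rel:ndoler} with \eqref{dem:eact} and transporting across $\rho$ via Lemma~\ref{tree-binary} does make Lemma~\ref{lem:main} equivalent to the identity $\oe(T)-\o(T)=2\bigl(\eact(T)-\oact(T)\bigr)$ on weakly increasing trees, and your list of the nodes affected by deleting the last modified-preorder node $c$ (namely $c$, its parent $v$, its left sibling $c_{k-1}$, and the left sibling $v^-$ of $v$) is exactly right, since activity depends only on level parity, sibling-position parity, a node's own degree parity, and its right sibling's degree parity. But the proposal stops precisely where the proof begins. The parity casework that you defer (``must be carried out in each parity case'') is not a routine appendix: it \emph{is} the lemma. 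The paper's proof of Lemma~\ref{lem:main} consists of nothing but this induction step, carried through roughly a dozen cases; until the cases are checked, the possibility of a ``residual term'' --- which is exactly the question at issue --- has not been excluded. Moreover, your inventory of case parameters is itself incomplete: it omits the parity of the position $j$ of $v$ among its own siblings and the degree parity of $v$'s right sibling, both of which enter through conditions (b) and (c) for $v$ and $v^-$. As submitted, this is a proof plan, not a proof.

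That said, the plan is viable and differs from the paper in a way worth recording. The paper inducts directly on $\ndoler(B)=\ndord(B)$ and therefore must track the dynamic/non-dynamic status of nodes, which is non-local (whether a node is dynamic depends on the activity of its parent and of its right grandsons); this is why the paper's cases repeatedly consult whether $u$, $u'$, $w$, $w'$ are active. Your reformulation eliminates dynamic nodes altogether and leaves a purely local check, which does close: writing $k$ for the degree of $v$ in $T$, (i) if the level of $v$ is even and $k$ is odd, both sides of the difference identity vanish (none of the four nodes changes activity); (ii) if the level of $v$ is even and $k$ is even, the left side is $2$, and the toggle of condition (c) at $c_{k-1}$ creates or destroys exactly one active node, contributing $+1$ to $\eact-\oact$ whatever the parity of the degree of $c_{k-1}$; (iii) if the level of $v$ is odd, the left side is $+2$ or $-2$ according as $k$ is even or odd, $c_{k-1}$ is never active, and exactly one of $v$, $v^-$ satisfies condition (b) (according to the parity of $j$); that single node toggles activity, and each sub-case (right sibling of $v$ empty or not, of either degree parity) yields a contribution of $+1$ or $-1$ to $\eact-\oact$ accordingly. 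Supplying these verifications would turn your outline into a complete proof that is arguably cleaner than the paper's; without them the lemma remains unproved.
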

\begin{proof}
 We will prove the  identity by induction on $p$. Clearly, the equality is true for $p=1$, as $\ndoler(B)=\ndord(B)=1$. For $p\geq2$, let $B'$ be the tree obtained from $B$ by removing the $p$-th node $v$, which must be a leaf, from $B$. Suppose that the parent (resp.~ancestor) of $v$ is $u$ (resp.~$w$). We discuss the following two cases:
\begin{itemize}
\item {\bf Case 1:} $v$ is the left child of $u$. We further distinguish two cases:
\begin{enumerate}[a)]
\item $v$ is in odd left-level of $B$. Since $v$ is the last node, $u$ has no right child in $B$. This implies that $\ndoler(B)=\ndoler(B')+1=\ndord(B')+1=\ndord(B)$.

\item $v$ is in even left-level of $B$. 
\begin{enumerate}[(i)]
\item If $u$ is active, then $v$ is the unique child of $u$.  By the definition of dynamic  odd nodes, $u$ and $w$ are both  dynamic  odd nodes in $B$, while in $B'$, $u$ is non-active even right-degree node in odd left-level  and $w$ is non-dynamic odd right-degree node. Hence, $\ndoler(B)=\ndoler(B')-1$ and $\ndord(B)=\ndord(B')-1$. 

\item If $u$ is not active, but the parent $u'$ of $u$ is active, then $u'$ and $u$ are both dynamic odd in $B$. But in $B'$, $u'$ is  non-dynamic odd right-degree node  and $u$  is   non-dynamic even right-degree node  in odd  left-level. Hence, $\ndoler(B)=\ndoler(B')-1$ and $\ndord(B)=\ndord(B')-1$.   

\item If  $u$ and its parent $u'$ both are not active, then $u$ is the right child of $u'$ and $v$ is the unique child of $u$. So in $B$, $u'$ is  non-dynamic even right-degree node  in odd  left-level and $u$ is non-dynamic odd right-degree node. But in $B'$, $u'$ and $u$ are both dynamic even. Thus, $\ndoler(B)=\ndoler(B')+1$ and $\ndord(B)=\ndord(B')+1$.
   \end{enumerate}
\end{enumerate}
\item {\bf Case 2:} $v$ is the right child of $u$. Suppose that $u$ is the $k$-th right grandson of $w$. We further distinguish two cases:
\begin{enumerate}[a)]
\item  $v$ is in odd left-level of $B$.
\begin{enumerate}[(i)] 
\item If $u$ is active, then $v$ is the unique child of $u$ and $k$ is odd. So in $B$, $w$ is even right-degree node in even left-level,   and $u$ and $v$ are dynamic even. But in $B'$, $w$ is non-dynamic odd right-degree node  and $u$ is non-dynamic even right-degree node in odd left-level. Thus, we have $\ndoler(B)=\ndoler(B')-1$ and $\ndord(B)=\ndord(B')-1$.  
\item If $u$ is not active and $k$ is odd, then $u$ has odd right-degree. In $B$, $w$ is even right-degree node in even left-level, $u$ is non-dynamic odd right-degree node, $v$ is non-dynamic even right-degree node in odd left-level. In $B'$, $u$ and $w$ are dynamic odd. Thus, $\ndoler(B)=\ndoler(B')+1$ and $\ndord(B)=\ndord(B')+1$. 
If $k$ is even, then in $B$, $w$ is non-dynamic  odd right-degree node and $v$ is non-dynamic  even right-degree node in odd left-level.  But in $B'$,  $w$ is  even right-degree node in even left-level. Hence, $\ndoler(B)=\ndoler(B')+1$ and $\ndord(B)=\ndord(B')+1$.
\end{enumerate}
\item  $v$ is in even left-level of $B$. 
\begin{enumerate}[(i)]
\item If   $w$ is active in $B$, then no matter $w$ is active even or active odd we have $\ndoler(B)=\ndoler(B')-1$ and $\ndord(B)=\ndord(B')-1$. 
\item If $w$ is not active and  has right child in $B$, then the parent of  $w$ must be active (as $v$ is the last node).  In this case, $\ndoler(B)=\ndoler(B')-1$ and $\ndord(B)=\ndord(B')-1$. 

\item If $w$ is not active and  has no right child in $B$, then we distinguish two cases according to $w$ is dynamic or not, i.e., the parent $w'$ of $w$ is active or not. If $w$ is dynamic in $B$, then  $w'$ is active. Thus, $w$ and $w'$ are dynamic in $B$, while in $B'$, $w$ and $w'$ are no longer dynamic and they have different parities of right-degrees. Therefore, no matter $w$ has odd or even right-degree, we have $\ndoler(B)=\ndoler(B')-1$ and $\ndord(B)=\ndord(B')-1$. Otherwise, $w$ is non-dynamic  and its parent $w'$ is non-active in $B$. Since the parities of the right-degrees of $w$ and $w'$ are different (as $w'$ is non-active) in $B$, we see that $w'$ is active in $B'$. Thus, $w$ and $w'$ becomes dynamic in $B'$, which forces $\ndoler(B)=\ndoler(B')+1$ and $\ndord(B)=\ndord(B')+1$, no matter $w$ has odd or even right-degree. 
\end{enumerate}
\end{enumerate}
\end{itemize}
 
 In conclusion, the above discussions prove the equality by induction.
\end{proof}

Now we are in position to prove Theorem~\ref{thm:action}.

\begin{proof}[{\bf Proof of Theorem~\ref{thm:action}}] In view of~\eqref{rel:reduce}, the partial $\gamma$-positivity expansion~\eqref{eq:partial-gm} of the reduced Schett polynomial $\hat S_M(x,y,z)$ is equivalent to 
\begin{equation}\label{wit:partial}
\sum_{T\in\T_M}x^{\ee(T)}y^{\oe(T)}z^{\o(T)}=\sum_{i}x^{i}\sum_{j=0}^{\frac{p+1-i}{2}}\gamma_{M,i,j}(yz)^{\frac{p+1-i}{2}-j}(y^2+z^2)^{j},
\end{equation} where $\gamma_{M,i,j}$ is the number of weakly increasing trees $T\in\mathcal{T}_M$ with $\ee(T)=i$, $\eact(T)=0$,  and $\act(T)=j$. By Lemma~\ref{tree-binary} and the definition of active nodes in weakly increasing binary trees, expansion~\eqref{wit:partial} is transformed by $\rho$ into 
\begin{equation}\label{wibt:partial}
\sum_{B\in\B_M}x^{\eler(B)}y^{\oler(B)}z^{\ord(B)}=\sum_{i}x^{i}\sum_{j=0}^{\frac{p+1-i}{2}}\gamma_{M,i,j}(yz)^{\frac{p+1-i}{2}-j}(y^2+z^2)^{j},
\end{equation} where $\gamma_{M,i,j}$ is the number of weakly increasing binary trees $B\in\mathcal{B}_M$ with $\eler(B)=i$, $\eact(B)=0$,  and $\act(B)=j$. We aim to prove expansion~\eqref{wibt:partial} by introducing a $\Z_2^{p}$-action on weakly increasing binary trees based on the fundamental transformation $\Lambda_i$. 

For any $B\in\B_M$, note that if $u$ is active, then neither its parent nor its children are active. Thus, the transformations $\Lambda_i$ are  commutative, i.e., $\Lambda_i\circ\Lambda_j=\Lambda_j\circ\Lambda_i$. Moreover,  by Lemma~\ref{LEM:bi-even->odd}, the transformations $\Lambda_i$ are also involutions. Therefore, for any subset $S\subseteq[p]$ we can then define the function $\Lambda_S:\B_M\rightarrow\B_M$ by
$$
\Lambda_S(T)=\prod_{i\in S}\Lambda_i(B).
$$
Hence  the group $\mathbb{Z}_2^p$ acts on $\B_M$ via the functions $\Lambda_S$, $S\subseteq [p]$. We call this action the {\em triangle  action}  on weakly increasing binary trees.

For any tree $B\in\mathcal{B}_M$, let $[B]=\{g(B): g\in\mathbb{Z}_2^p\}$ be the orbit of $B$ under the triangle  action. The triangle  action divides the set $\mathcal{B}_M$ into disjoint orbits. Let us introduce
\begin{align*}
\Gamma_{M}&:=\{B\in\mathcal{B}_M: \eact(B)=0\},  \\
 \Gamma_{M,i,j}&:=\{B\in\Gamma_M: \eler(B)=i\text{ and } \mathsf{act}(B)=j\}.
\end{align*}
Because of Lemma~\ref{LEM:bi-even->odd}, each orbit $[B]$ contains a unique tree from $\Gamma_{M}$ that we denote $\tilde B$. Moreover, for each $\tilde B\in\Gamma_M$, we have  
\begin{align*}
&\quad\sum_{B\in[\tilde B]}x^{\eler(B)}y^{\oler(B)}z^{\ord(B)}\\
&=\sum_{B\in[\tilde B]}x^{\eler(B)}y^{\dme(B)+\ndoler(B)}z^{\dmo(B)+\ndord(B)}\qquad\text{(by~\eqref{rel:ndoler})}\\
&=\sum_{B\in[\tilde B]}x^{\eler(B)}y^{\dme(B)}z^{\dmo(B)}(yz)^{\ndord(B)}\qquad\text{(by Lemma~\ref{lem:main})}\\
&=x^{\eler(\tilde B)}(yz)^{\ndord(\tilde B)}(y^2+z^2)^{\act(\tilde B)}.\qquad\text{(by~\eqref{dem:eact} and Lemmas~\ref{LEM:bi-eler-action} and~\ref{LEM:dym})}
\end{align*}
Summing over all orbits gives
\begin{align*}
\sum_{B\in\B_M}x^{\eler(B)}y^{\oler(B)}z^{\ord(B)}&=\sum_{\tilde B\in\Gamma_M}\sum_{B\in[\tilde B]}x^{\eler(B)}y^{\oler(B)}z^{\ord(B)}\\
&=\sum_{\tilde B\in\Gamma_M}x^{\eler(\tilde B)}(yz)^{\ndord(\tilde B)}(y^2+z^2)^{\act(\tilde B)}\\
&=\sum_{i}x^{i}\sum_{j=0}^{\frac{p+1-i}{2}}|\Gamma_{M,i,j}|(yz)^{\frac{p+1-i}{2}-j}(y^2+z^2)^{j},
\end{align*}
where the last equality follows from $p+1=2(\ndord(\tilde B)+\act(\tilde B))+\eler(\tilde B)$, a consequence of $p+1=\oler(\tilde B)+\ord(\tilde B)+\eler(\tilde B)$, relationships in~\eqref{rel:ndoler} and  Lemma~\ref{lem:main}. The above expansion is exactly~\eqref{wibt:partial}, which completes the proof of the theorem.
\end{proof}

 \section{Final remarks}
 In this paper, we find the symmetry~\eqref{Weak:thm} on weakly increasing trees on a multiset and provide three proof of it: an involution proof, a generating function proof and a group action proof, each of which has its own merit. At this point, we would like to pose a challenging conjecture regarding the reduced Schett polynomials $\hat{S}_M(x,y,z)$ defined in~\eqref{def:redsche}.
 
 \begin{conjecture} Let 
 $$
 \hat{S}_{M,i}(t):=\sum_{T\in\mathcal{T}_M\atop\lfloor\ee(T)/2\rfloor=i}t^{\lfloor\frac{\oe(T)}{2}\rfloor}.
 $$
 Then the polynomial $\hat{S}_{M,i}(t)$ has only real roots for any  $M$ and $i$. 
 \end{conjecture}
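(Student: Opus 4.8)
The plan is to prove the conjecture by induction while carrying along a strengthened \emph{interlacing} hypothesis for the whole family $\big(\hat{S}_{M,i}(t)\big)_{i\ge0}$, and to exploit the $\gamma$-positive expansion already furnished by Theorem~\ref{thm:action}. By that theorem and the definition of the conjecture's polynomial one has $\hat{S}_{M,i}(t)=\sum_{j\ge0}\hat\gamma_{M,i,j}\,t^{j}(1+t)^{n_i-2j}$ with $n_i:=\lfloor p/2\rfloor-i$, so each $\hat{S}_{M,i}(t)$ is palindromic and $\gamma$-nonnegative. Writing $\hat{S}_{M,i}(t)=(1+t)^{n_i}P_{M,i}\!\big(t/(1+t)^2\big)$ with $P_{M,i}(w):=\sum_j\hat\gamma_{M,i,j}w^{j}$, and using that the substitution $w=t/(1+t)^2$ sends each reciprocal pair of negative reals to a single negative real, the real-rootedness of $\hat{S}_{M,i}(t)$ is \emph{equivalent} to the real-rootedness (with nonpositive roots) of the $\gamma$-polynomial $P_{M,i}(w)$. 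The first task is to decide which formulation is more amenable to a recurrence, and I would develop both in parallel, since the shorter $P_{M,i}$ may linearize the floor operations better.

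For the set case $M=[n]$ the cleanest handle is the grammar $G=\{x\to yz,\ y\to xz,\ z\to xy\}$ of Theorem~\ref{thm:schett}, which gives $S_n(x,y,z)=D^n(x)$ and hence a transfer rule relating the $x$-slices of $S_n$ to those of $S_{n-1}$. Reading off the parity bookkeeping encoded in the three rules — a node changes its label class while a fresh even-degree leaf is created — I would convert the action of $D$ into a linear recurrence for the reduced slices of the form $\hat{S}_{[n],i}(t)=\sum_k c_k(t)\,\mathcal{O}_k\big(\hat{S}_{[n-1],i'}(t)\big)$, where each $\mathcal{O}_k$ is a concrete operator (multiplication by $1+t$, an Eulerian-type operator built from $t\tfrac{\mathrm{d}}{\mathrm{d}t}$, or a shift in the index $i$) and each $c_k(t)$ has nonnegative coefficients. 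The aim is to display this recurrence as a nonnegative combination of operators that are known to preserve real-rootedness with controlled root behaviour. I would then lift to an arbitrary multiset using the substitution machinery of Section~\ref{alg:sym}: since $N^{(n)}$ is obtained from $N^{(n-1)}$ by the substitutions \eqref{substi:x}--\eqref{substi:w} (attaching a plane tree at each node labelled $n$), the same compositions of operators build $\hat{S}_M$ from the polynomial of $M\setminus\{n^{p_n}\}$ one element at a time.

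Rather than attack a single $\hat{S}_{M,i}$ in isolation, I would strengthen the inductive hypothesis to the statement that, for each fixed $M$, the polynomials $\hat{S}_{M,0}(t),\hat{S}_{M,1}(t),\ldots$ are real-rooted and consecutive members interlace — equivalently, that the bivariate slice $\hat{S}_M(x,t,1)=\sum_i x^i\hat{S}_{M,i}(t)$ lies in the appropriate interlacing/stability class. The payoff is that the standard toolbox applies: a nonnegative linear combination of polynomials that interlace a common polynomial is real-rooted and again interlaces it, and such interlacing families are closed under multiplication by $1+t$ and under the derivative-type operators appearing in the transfer rule. The inductive step is then to verify that the operator of the previous paragraph maps an interlacing family for $M\setminus\{n^{p_n}\}$ to one for $M$; the base cases $M=\emptyset,\{1\},[2],\{1^2\}$ are immediate and are consistent with the computed example $\hat{S}_{\{1^2,2^2\}}$ following Definition~\ref{DEF:wit-active}.

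The principal obstacle is that the floors $\lfloor\ee/2\rfloor$ and $\lfloor\oe/2\rfloor$ destroy the clean linearity of the grammar: the operator governing the reduced slices is not $D$ itself but a parity-folded version of it, and it is exactly the interplay between creating a new leaf (which toggles a parity and may or may not increment a floor) and relocating a node among the three statistic classes that must be shown to assemble into a real-rootedness-preserving operator. A second, compounding difficulty is that the multiset substitutions \eqref{substi:x}--\eqref{substi:w} are rational in $N$ and $N^*$, so the induced transformation of $\hat{S}_M$ is a composition of several such operators, and proving that this composition preserves interlacing may force a genuinely multivariate statement — for instance that the full $\hat{S}_M(x,y,z)$ is real stable (or Lorentzian) in all three variables — which would be a strictly stronger, and hopefully self-propagating, hypothesis. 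I expect that isolating the correct multivariate stability property, stable under the element-adding step, is the crux; once it is found, the nonnegativity supplied by the group-action interpretation $\hat\gamma_{M,i,j}=|\Gamma_{M,i,j}|$ should render the interlacing verifications routine.
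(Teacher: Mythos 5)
First, a point of order: the statement you were asked to prove is posed in the paper as an open \emph{conjecture}. The authors give no proof; they record only the partial evidence that Theorem~\ref{thm:action} yields the $\gamma$-positivity of each $\hat{S}_{M,i}(t)$ (which, by Br\"and\'en's observation quoted in the final remarks, is a necessary consequence of palindromicity plus real-rootedness, not a sufficient condition for it), together with machine verification for plane trees and increasing trees on fewer than eleven nodes. So there is no paper proof to compare against, and your text must stand on its own. It does not: it is a research plan, not a proof. Your preliminary reductions are sound — the expansion $\hat{S}_{M,i}(t)=\sum_j\hat\gamma_{M,i,j}\,t^j(1+t)^{n_i-2j}$ with $n_i=\lfloor p/2\rfloor-i$ does follow from~\eqref{eq:partial-gm} upon setting $z=1$, and for a palindromic polynomial with nonnegative coefficients real-rootedness is indeed equivalent to the $\gamma$-polynomial $P_{M,i}(w)$ having only real nonpositive roots — but everything after that is conditional, and the conditions are never discharged.

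The concrete gaps are these. (1) You never derive the claimed transfer recurrence $\hat{S}_{[n],i}(t)=\sum_k c_k(t)\,\mathcal{O}_k\bigl(\hat{S}_{[n-1],i'}(t)\bigr)$. The grammar of Theorem~\ref{thm:schett} acts linearly on the \emph{unreduced} trivariate slices, but the floors $\lfloor\ee(T)/2\rfloor$ and $\lfloor\oe(T)/2\rfloor$ fold pairs of unreduced slices together, and — as you yourself observe — the operator governing the reduced slices is a ``parity-folded'' version of $D$ whose existence in the asserted nonnegative-operator form is precisely what would have to be proved; attaching a new node moves weight between slices in a position-dependent way that need not respect the folding. (2) Even granting such a recurrence, a nonnegative combination of polynomials is real-rooted only under interlacing hypotheses (e.g.\ a common interleaver), and closure of an interlacing \emph{family} under your operators ($t\frac{\mathrm{d}}{\mathrm{d}t}$-type operators, index shifts in $i$) must be checked for the specific operators that arise; you invoke ``the standard toolbox'' without verifying any of its hypotheses. (3) The passage from $[n]$ to a general multiset rests on the substitutions~\eqref{substi:x}--\eqref{substi:w}, but these are identities of four-variable formal power series, rational in $N$ and $N^*$; no argument is given that they descend to real-rootedness- or interlacing-preserving operations on the individual polynomials $\hat{S}_{M,i}(t)$, where the specialization $w=z$ and the floor operations intervene again. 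Your closing admission that ``isolating the correct multivariate stability property\ldots is the crux'' concedes that the central step is missing — note also that partial $\gamma$-positivity does not imply real stability or the Lorentzian property, so the strengthened hypothesis you hope for is itself unsupported. As it stands, the proposal establishes nothing beyond what the paper already records, and the conjecture remains open under your approach.
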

 
 Br\"and\'en~\cite[Remark~7.3.1]{Br2} observed that if a polynomial $h(t)\in\mathbb{R}[t]$ is  palindromic  and has only real roots, then $h(t)$ is $\gamma$-positive. Since Theorem~\ref{thm:action} implies the $\gamma$-positivity of $\hat{S}_{M,i}(t)$, it provides partial evidence for the above real-rootedness conjecture. Moreover, we have verified the conjecture for plane trees and increasing trees of nodes less than eleven.

\section*{Acknowledgement}

The first author was supported by the National Science Foundation of China grant 11871247 and the project of Qilu Young Scholars of Shandong University. 

\end{document}